\newtheorem{theo}{Theorem}[section]
\newtheorem{cor}[theo]{Corollary}
\newtheorem{lemma}[theo]{Lemma}
\newtheorem{prop}[theo]{Proposition}
\newtheorem{claim}[theo]{Claim}
\theoremstyle{definition}
\newtheorem{rmk}[theo]{Remark}
\newtheorem{expl}[theo]{Example}
\newtheorem*{settingA}{Setting 1}
\newtheorem*{settingB}{Setting 2}
\numberwithin{equation}{section}
\newcommand{\id}{\textrm{Id}}
\newcommand{\E}{\operatorname{\mathbb{E}}} 
\newcommand{\N}{\ensuremath{\mathbb{N}}}   
\newcommand{\Proba}{\operatorname{\mathbb{P}}} 
\newcommand{\R}{\ensuremath{\mathbb{R}}}   
\newcommand{\C}{\ensuremath{\mathbb{C}}}   
\newcommand{\longto}{\mathop{\longrightarrow}\limits}
\newcommand{\supp}{\mathop{\rm supp}} 
\newcommand{\lsc}{l.s.c.}  
\newcommand{\usc}{u.s.c.}  
\newcommand{\NormCond}{$(i)$}
\newcommand{\ConstCond}{$(ii)$}
\newcommand{\ConstCondA}{$(ii)_1$}
\newcommand{\ConstCondB}{$(ii)_2$}
\newcommand{\MeasuCond}{$(iii)$}
\newcommand{\SemiInt}{$(iv)$}
\newcommand{\SettA}{Setting~1}
\newcommand{\SettB}{Setting~2}
\title{Some extensions of the Pr\'ekopa--Leindler inequality using
 Borell's stochastic approach}
\date{}
\author{Dario Cordero-Erausquin and Bernard Maurey \\
\small{Institut de Math\'ematiques de Jussieu-PRG, UPMC (Paris 6)}}
\begin{document}
\maketitle
\renewcommand{\thefootnote}{}
\footnote{2010 \emph{Mathematics Subject Classification}: Primary 39B62; 
 Secondary 52A40, 60H30, 52A20.}
\footnote{\emph{Key words and phrases}: Brunn--Minkowski inequality,
Brascamp--Lieb inequalities, geometric functional inequalities, Gaussian
stochastic calculus.}
\renewcommand{\thefootnote}{\arabic{footnote}}
\setcounter{footnote}{0}

\begin{abstract}
 We present an abstract form of the Pr\'ekopa--Leindler inequality that
includes several known ---and a few new--- related functional inequalities
on Euclidean spaces. The method of proof and also the formulation of the
new inequalities are based on Christer Borell's stochastic approach to
Brunn--Minkowski type inequalities. 
\end{abstract}

\section{Introduction and main statement}\label{intro}
The Brunn--Minkowski inequality asserts that for Borel subsets $A, B$ of
$\R^n$ and $t \in [0, 1]$, the volume of the Minkowski combination 
\[
 (1 - t) A + t B = \{ (1 - t) a + t b : a \in A, \, b \in B \}
\]
satisfies the inequality $|(1 - t) A + t B| \ge |A|^{1 - t} |B|^t$, where
$|E|$ denotes the volume of a Lebesgue-measurable subset $E$ of $\R^n$.
There is a long story of functional generalizations of this inequality,
that we do not recall here; let us just mention that Borell's 1975
paper~\cite{borell75} remains a milestone in the subject. A somewhat
definitive form is given by the following theorem.

\begin{theo}[Pr\'ekopa--Leindler inequality]
Let $t \in [0, 1]$ and let $f_0, f_1, g : \R^n \to \R \cup \{+\infty\}$ be
Borel functions such that, for every $x_0, x_1 \in \R^n$,
\[
     g \left( (1 - t) x_0 + t x_1 \right) 
 \le (1 - t) f_0(x_0) + t f_1(x_1).
\]
Then
\[
     \int_{\R^n} e^{-g(x)} \, dx 
 \ge \left(\int_{\R^n} e^{-f_0(x)} \, dx \right)^{1 - t}
     \left(\int_{\R^n} e^{-f_1(x)} \, dx \right)^t.
\]
\end{theo}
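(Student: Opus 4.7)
The plan is to proceed by induction on the dimension $n$, with the one-dimensional case serving as the base and the Brunn--Minkowski inequality on the line --- $|(1-t)A+tB| \ge (1-t)|A|+t|B|$ for non-empty measurable sets $A, B \subset \R$ --- as the key geometric input. Passing to exponentials, I set $h_i = e^{-f_i}$ and $m = e^{-g}$; the hypothesis becomes $m((1-t)x_0+tx_1) \ge h_0(x_0)^{1-t} h_1(x_1)^t$, and the target is $\int m \ge (\int h_0)^{1-t} (\int h_1)^t$. In dimension one, after rescaling so that $\sup h_0 = \sup h_1 = 1$, the super-level sets $A_r=\{h_0>r\}$, $B_r=\{h_1>r\}$, $C_r=\{m>r\}$ satisfy $(1-t)A_r+tB_r \subseteq C_r$ for each $r \in (0,1)$. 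Combining the $1$D Brunn--Minkowski bound with Cavalieri's principle gives $\int m \ge (1-t)\int h_0 + t \int h_1$, and the arithmetic--geometric mean inequality concludes.

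For the inductive step from $n-1$ to $n$, I would slice along the last coordinate. For each pair $(y_0,y_1) \in \R^2$ and $y_t := (1-t)y_0+ty_1$, the sections $x'\mapsto f_0(x',y_0)$, $x'\mapsto f_1(x',y_1)$ and $x'\mapsto g(x',y_t)$ satisfy the Pr\'ekopa--Leindler hypothesis in dimension $n-1$. Applying the inductive hypothesis produces the pointwise inequality $G(y_t) \ge F_0(y_0)^{1-t} F_1(y_1)^t$, where $F_i(y) := \int_{\R^{n-1}} e^{-f_i(x',y)}\,dx'$ and $G(y) := \int_{\R^{n-1}} e^{-g(x',y)}\,dx'$. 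This is exactly the $1$D hypothesis for the functions $-\log F_0$, $-\log F_1$, $-\log G$, and a Fubini identification of the resulting inequality with the full $n$-dimensional integral closes the induction.

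The main technical obstacle I foresee is measurability: I need the partial integrals $F_i$, $G$ to be Borel measurable so that the one-dimensional step applies, and the super-level sets in the $1$D proof must be measurable. A clean route is to first treat lower semicontinuous $f_i, g$, for which $\{h_i > r\}$ is open, and then extend by approximation. Given the paper's stated focus, I would expect the authors to bypass this induction entirely by a stochastic representation of $-\log \int e^{-\phi}\,dx$ in the spirit of Borell, which treats all dimensions uniformly and should extend more naturally to the abstract framework announced in the introduction.
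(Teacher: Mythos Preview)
Your inductive argument is the classical proof and is essentially correct. The measurability issues you flag are minor: the marginals $F_i, G$ are Borel by Tonelli, and in dimension one the Minkowski sum $(1-t)A_r+tB_r$ need not itself be Borel, but only the containment in $C_r$ is used and the inner-measure form of one-dimensional Brunn--Minkowski suffices. One small gap: your normalization $\sup h_i=1$ presumes these suprema are finite, which is not guaranteed since $f_i$ may be unbounded below; a preliminary truncation $f_i\mapsto\max(f_i,-N)$ followed by monotone convergence disposes of this.

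The paper, however, takes exactly the route you predict in your last paragraph. It does not argue by induction on $n$ at all; Pr\'ekopa--Leindler is recovered as the instance $|\Omega_1|=2$, $|\Omega_2|=1$, $A(x_0,x_1)=(1-t)x_0+tx_1$ of the abstract discrete theorem, whose proof rests on Borell's representation
\[
 -\log P_T(e^{-f})(0)
 =\inf_u \E\Bigl[f\Bigl(B_T+\int_0^T u_r\,dr\Bigr)
   +\tfrac12\int_0^T|u_r|^2\,dr\Bigr].
\]
Near-optimal drifts $u^{(0)},u^{(1)}$ for $f_0,f_1$ are averaged to $v=(1-t)u^{(0)}+tu^{(1)}$; the hypothesis bounds the $f$-term and convexity of $|\cdot|^2$ bounds the energy term, yielding the Gaussian inequality, and $T\to\infty$ gives the Lebesgue version. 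Your proof is more elementary and self-contained, but the slicing-plus-Fubini mechanism has no evident analogue when the output side carries several functions $g_j$ (as in Proposition~\ref{propM}); that extension is precisely what the stochastic argument is designed to reach, and is the paper's point.
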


 Accepting the value $+\infty$ enables us to reach directly indicator
functions $\mathbf{1}_E = e^{-f_E}$, by letting $f_E$ be $0$ on $E$ and
$+\infty$ outside. However, we can restrict ourselves to Borel functions
that are not Lebesgue-almost everywhere equal to $+\infty$. Such Borel
functions $f$ will be said to be \emph{L-proper}, which is equivalent to
saying that $\int_{\R^n} e^{ - f(x)} \, dx > 0$. 

 One can reasonably argue that the interest of the Pr\'ekopa--Leindler
inequality resides not only in its consequences, which are numerous (some
are recalled for instance in~\cite{gardner, Maurey:2005p220}), but also in
the emphasis it has put on log-concavity, and in the related techniques of
proof it has originated, such as mass transportation or semi-group
techniques, and more recently $L^2$-methods (as in~\cite{CEonB}). 

 Here, we will concentrate on Borell's stochastic
approach~\cite{Borell:2000p204} to the inequality above. It somehow reduces
the inequalities under study to the convexity of $|\cdot|^2$, the square of
the Euclidean norm on $\R^n$. It will allow us to obtain some unexpected
inequalities, for instance that of the following proposition. 

\begin{prop}\label{propM}
Let $f_0, f_1$, $g_0, g_1$ be four Borel functions from $\R^n$ to
$\R \cup \{+\infty\}$ such that, for every $x_0, x_1 \in \R^n$,
\begin{equation} \label{FirstBasic}
     g_0( 2 x_0 / 3 + x_1 / 3) + g_1( x_0 / 3 + 2 x_1 / 3) 
 \le f_0(x_0) + f_1(x_1).
\end{equation}
Then
\[
     \left(\int_{\R^n} e^{-g_0(x)} \, dx \right)  
     \left(\int_{\R^n} e^{-g_1(x)} \, dx \right)
 \ge \left(\int_{\R^n} e^{-f_0(x)} \, dx \right)  
     \left(\int_{\R^n} e^{-f_1(x)} \, dx \right).
\]
\end{prop}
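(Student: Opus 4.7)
The plan is to use Borell's stochastic representation for $\log\int e^{-f}$, combined with the fact that the matrix $T = \bigl(\begin{smallmatrix} 2/3 & 1/3 \\ 1/3 & 2/3\end{smallmatrix}\bigr)$ is doubly stochastic: its row sums equal $1$, so that linear combinations of co-driven Brownian motions remain standard Brownian, and its non-unit eigenvalue equals $1/3$, which provides the convex gain. Recall the variational identity on a standard Brownian motion $(B_t)$ in $\R^n$ (valid for suitable Borel $f$):
\[
  \log \int_{\R^n} e^{-f(x)} dx = c_n + \sup_u \E\Bigl[-f(X^u_1) + \tfrac12 |X^u_1|^2 - \tfrac12 \int_0^1 |u_t|^2 dt\Bigr],
\]
with $c_n = \tfrac{n}{2}\log(2\pi)$, the sup taken over adapted drifts $u$, and $X^u_t := B_t + \int_0^t u_s\, ds$; any specific choice of $u$ gives a lower bound on $\log\int e^{-f}$.

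Fix arbitrary adapted drifts $u^0, u^1$ on the \emph{same} Brownian motion $B$, producing $X^i_t = B_t + \int_0^t u^i_s\, ds$, and define $Y^0 := \tfrac23 X^0 + \tfrac13 X^1$ and $Y^1 := \tfrac13 X^0 + \tfrac23 X^1$. Because each row of $T$ sums to $1$, one has $Y^i_t = B_t + \int_0^t \tilde u^i_s\, ds$ with $\tilde u^0 = \tfrac23 u^0 + \tfrac13 u^1$ and $\tilde u^1 = \tfrac13 u^0 + \tfrac23 u^1$, so each $Y^i$ is a standard Brownian motion driven by $B$ with drift $\tilde u^i$. Apply the identity above to $g_0, g_1$ with drifts $\tilde u^0, \tilde u^1$, add the two lower bounds, and use that $(Y^0_1, Y^1_1) = T(X^0_1, X^1_1)$ to invoke hypothesis~\eqref{FirstBasic} pointwise in the form $g_0(Y^0_1) + g_1(Y^1_1) \le f_0(X^0_1) + f_1(X^1_1)$. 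This gives, for every $(u^0, u^1)$,
\[
  \log\!\!\int e^{-g_0} + \log\!\!\int e^{-g_1} \ge 2c_n + \E\!\left[-f_0(X^0_1) - f_1(X^1_1) + \tfrac12(|Y^0_1|^2 + |Y^1_1|^2) - \tfrac12\!\int_0^1(|\tilde u^0|^2 + |\tilde u^1|^2)\, dt\right].
\]

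The main technical step is to convert the $Y$- and $\tilde u$-quadratics into $X$- and $u$-quadratics. A direct calculation gives $|Y^0_1|^2 + |Y^1_1|^2 - |X^0_1|^2 - |X^1_1|^2 = -\tfrac49|X^0_1 - X^1_1|^2$ and, pointwise in time, $|\tilde u^0_t|^2 + |\tilde u^1_t|^2 - |u^0_t|^2 - |u^1_t|^2 = -\tfrac49|u^0_t - u^1_t|^2$; hence the ``excess'' in the above lower bound is $\tfrac29\bigl(\int_0^1 |u^0_s - u^1_s|^2\, ds - |X^0_1 - X^1_1|^2\bigr)$, which is non-negative by Cauchy--Schwarz applied to the identity $X^0_1 - X^1_1 = \int_0^1 (u^0_s - u^1_s)\, ds$ (the Brownian parts cancel since $X^0, X^1$ share $B$). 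Taking the supremum over $u^0$ and $u^1$ separately---the remaining expectation decouples as a $u^0$-term plus a $u^1$-term---and invoking Borell's identity applied to $f_0, f_1$ yields $\log\int e^{-g_0} + \log\int e^{-g_1} \ge \log\int e^{-f_0} + \log\int e^{-f_1}$, which is the conclusion. The main obstacle is this quadratic absorption step: the Cauchy--Schwarz gain must exactly compensate the convexity deficit $-\tfrac49|X^0_1 - X^1_1|^2$ coming from $|\cdot|^2$ at the convex combinations defining $Y^0, Y^1$---this is precisely the balance that drives Borell's proof of the classical Pr\'ekopa--Leindler inequality.
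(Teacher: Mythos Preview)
Your argument is correct and is in the same spirit as the paper's: both rest on Borell's representation (Lemma~\ref{LBorell}), pick near-optimal drifts $u^0, u^1$ for $f_0, f_1$ on a \emph{common} Brownian motion, push them through the doubly stochastic matrix $T$ to obtain drifts $\tilde u^0, \tilde u^1$ for testing $g_0, g_1$, and exploit the contraction $\|T\|\le 1$.

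The one genuine difference is how the Lebesgue case is reached. The paper works with the Gaussian formula $-\log P_T(e^{-f})(0)=\inf_u \E\bigl[f(X^u_T)+\tfrac12\!\int|u|^2\bigr]$: no $|X|^2$ terms appear, and the single pointwise inequality $|\tilde u^0_r|^2+|\tilde u^1_r|^2\le |u^0_r|^2+|u^1_r|^2$ (that is, $\|T\|\le1$) suffices; the Lebesgue statement then follows by adding $\varepsilon|x|^2$ to all functions and letting $\varepsilon\to 0$. You instead write the Lebesgue-measure variant directly, which inserts the terms $\tfrac12|X^u_1|^2$ and therefore forces the extra Cauchy--Schwarz comparison $\bigl|\int_0^1(u^0-u^1)\bigr|^2\le\int_0^1|u^0-u^1|^2$ to absorb the deficit $-\tfrac29|X^0_1-X^1_1|^2$. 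Both routes are valid; the paper's is a bit cleaner (only the norm condition is used), while yours avoids the $\varepsilon\to 0$ limit. One caveat: your Lebesgue Borell identity amounts to applying Lemma~\ref{ExpoBoundLemma} to $f_i-|\cdot|^2/2$, which is not bounded below in general, so the same truncation/approximation reductions carried out in Section~\ref{ProofMainTheo} are still needed to cover arbitrary Borel $f_i, g_j$.
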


 We will see that it is rather natural to arrive at this type of inequality
using Borell's stochastic approach, whereas it seems not to be the case
with other methods, for instance those based on transportation methods. The
point here is to split the values of the functions $f_i$'s at some points
into the values of two functions $g_j$'s at some related points. This is
not interesting in the case where the functions $f_i$'s take only the values
$+\infty$ and $0$, and the functional inequality above does not give
anything new when applied to the case where the functions $e^{-f_i}$'s are
indicators of sets, as we will explain in Section~$4$ below. 

 The previous proposition and its proof suggest actually more general
inequalities.  Writing the conclusion as
\[
     \sum_j - \log \Bigl( \int_{\R^n} e^{-g_j} \Bigr)
 \le \sum_i - \log \Bigl( \int_{\R^n} e^{-f_i} \Bigr), 
\]
we may think about
an extension of the results, where the finite families of functions 
$f_i, g_j$ are replaced by families $f_s, g_t$ depending upon continuous
parameters $s, t$ (as for instance in~\cite{barthe04}). In the ``basic
assumption"~\eqref{FirstBasic}, the two values $x_0, x_1$ on the 
right-hand side of the inequality will be replaced, for example, by a
selection $x = \{x(s)\}_{s \in [0, 1]}$ of points of $\R^n$, and the values 
$y = \{y(t)\}_{t \in [0, 1]}$ on the left-hand side will be obtained from a
linear transformation $A$ acting on this data $x$, i.e. $y = Ax$. So,
roughly speaking, under appropriate assumptions on the operator $A$, we
may expect that if for ``all" $x = \{x(s)\}$ we have
\[
     \int_0^1 g_t ((Ax)(t)) \, dt 
 \le \int_0^1 f_s (x(s)) \, ds
\]
then it will follow that 
\[
     \int_0^1 - \log \Bigl( \int_{\R^n} e^{-g_t} \Bigr) \, dt
 \le \int_0^1 - \log \Bigl( \int_{\R^n} e^{-f_s} \Bigr) \, ds.
\] 
Actually, there was no reason, when replacing the sums by integrals, to use
the ``uniform" distributions $dt$ and $ds$ rather than probability measures
$\mu(dt)$ and $\nu(ds)$ on $[0, 1]$, which include the discrete case when
these measures are convex combinations of Dirac measures. Anyway, the main
question is to understand what the appropriate conditions are to impose on
the linear operator $A$.  

 Several points must be set before proceeding. We say that a real function
$F$ on a measure space $(\Omega, \Sigma, \mu)$ is 
\emph{$\mu$-semi-integrable} if at least one of $F^+ = \max(F, 0)$ or 
$F^- = \max(-F, 0)$ is $\mu$-integrable. The integral of $F$ takes then a
definite value in $[-\infty, +\infty]$. This assumption is needed for 
$F(s) = - \log \Bigl( \int_{\R^n} e^{-f_s} \Bigr)$ in order to make sense
of the preceding integrals.

 We introduce the following abstract setting. 

\begin{settingA}
We are given 
\begin{itemize} 
\item Two measure spaces $X_1 = (\Omega_1, \Sigma_1, \mu_1)$ and
$X_2 = (\Omega_2, \Sigma_2, \mu_2)$, where $\Sigma_i$ is a $\sigma$-algebra
of subsets of $\Omega_i$ and $\mu_i$ is a \emph{finite} measure, 
$i = 1, 2$. We assume that $\Omega_1$ is a Polish topological space, and
$\Sigma_1$ is its Borel $\sigma$-algebra.
\item An integer $n \ge 1$ and a continuous linear operator 
\[
 A : L^2(X_1, \R^n) \to L^2(X_2, \R^n),
\] 
where the $L^2$-norms of the $\R^n$-valued functions are computed with
respect to the Euclidean norm $|\cdot |$ on $\R^n$ and to the measures
$\mu_1$ and $\mu_2$, respectively. We assume that
\begin{enumerate}
\item[\NormCond] the operator $A$ satisfies the norm condition 
$\|A\| \le 1$,
\item[\ConstCond] the operator $A$ acts as identity operator on the constant
vector valued functions, {\it i.e.}, for any $v_0 \in \R^n$, the constant
function $\Omega_1 \ni s \mapsto v_0$ is sent by $A$ to the constant
function $\Omega_2 \ni t \mapsto v_0$.
\end{enumerate}

\item
Two families $\{ f_s \}_{s \in \Omega_1}$ and $\{ g_t \}_{t \in \Omega_2}$
of Borel functions from $\R^n$ to $\R \cup \{+\infty\}$ satisfying
\begin{enumerate}
\item[\MeasuCond] 
the functions $(s, x) \mapsto f_s(x)$ and $(t, x) \mapsto g_t(x)$ are
measurable with respect to the $\sigma$-algebras 
$\Sigma_i \otimes \mathcal{B}_{\R^n}$, $i = 1, 2$, respectively, the
notation $\mathcal{B}_{\R^n}$ being for the Borel $\sigma$-algebra 
of~$\R^n$,
\item[\SemiInt]
the functions
\[
 \Omega_1 \ni s \mapsto
  \log \Bigl( \int_{\R^n} e^{-f_s(x)} \, dx \Bigr)
 \ \ \text{and} \ 
 \Omega_2 \ni t \mapsto
  \log \Bigl( \int_{\R^n} e^{-g_t(x)} \, dx \Bigr)
\]
are semi-integrable with respect to $\mu_1$, $\mu_2$, respectively.
\end{enumerate}
\end{itemize} 
\end{settingA}

 Unfortunately, there is a tradeoff between the generality of the statement
we can reach and the technical assumptions that are required in the proof.
We will start with a fairly general situation. In most applications, the
technical assumptions on the functions in the statement below are either
easy to impose or to discard, as explained for instance in
Remark~\ref{rem:restrictions} and as shown in Theorem~\ref{discretePL1}
below.  So, our most abstract version of the Pr\'ekopa--Leindler inequality
reads as follows. 

\begin{theo}\label{genPL1}
Under \SettA\ with $\mu_1$ and $\mu_2$ having the same finite mass,
$\mu_1(\Omega_1) = \mu_2(\Omega_2) < +\infty$, we make the additional
assumptions on the functions: 
\begin{itemize}
\item[--]
for every $s \in \Omega_1$, the function $f_s$ is non-negative, and for
every $t \in \Omega_2$, the function $g_t$ is non-negative and lower
semicontinuous,
\item[--]
for some $\varepsilon_0 > 0$, we have that
\begin{equation}\label{dominated}
 \int_{\Omega_1} \log^- 
  \Bigl( 
   \int_{\R^n} \exp \bigl( - f_s(x) - \varepsilon_0 |x|^2 \bigr) \, dx 
  \Bigr) 
   \, d\mu_1(s) < +\infty.
\end{equation}
\end{itemize}
Then, if for every $\alpha \in L^2(X_1, \R^n)$ we have
\begin{equation}
\label{condPL}
     \int_{\Omega_2} g_t ( (A \alpha)(t) ) \, d\mu_2(t) 
 \le \int_{\Omega_1} f_s( \alpha(s) ) \, d\mu_1(s),
\end{equation}
it follows that
\begin{equation}
\label{conclPL}
     \int_{\Omega_2}
      -\log \left( \int_{\R^n} e^{-g_t(x)} \, dx \right)
       \, d\mu_2(t) 
 \le \int_{\Omega_1} 
      -\log \left( \int_{\R^n} e^{-f_s(x)} \, dx \right)
       \, d\mu_1(s).
\end{equation}
\end{theo}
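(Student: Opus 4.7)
The strategy is Borell's stochastic approach via the Boué--Dupuis variational formula: for standard $\R^n$-Brownian motion $B$ and sufficiently regular $h\ge0$,
\[
 -\log\mathbb{E}\bigl[e^{-h(B_\Lambda)}\bigr]
  = \inf_{v}\mathbb{E}\Bigl[h\Bigl(B_\Lambda+\int_0^\Lambda v_r\,dr\Bigr)
       +\tfrac12\int_0^\Lambda |v_r|^2\,dr\Bigr],
\]
the infimum running over adapted processes $v$ on $[0,\Lambda]$. Since $B_\Lambda$ has Gaussian density $(2\pi\Lambda)^{-n/2}e^{-|x|^2/(2\Lambda)}$, this rewrites as
\[
 -\log\int_{\R^n}e^{-h(x)-|x|^2/(2\Lambda)}\,dx
 \;=\;\inf_v \mathbb{E}[\cdots]\;-\;\tfrac n2\log(2\pi\Lambda).
\]
I will prove~\eqref{conclPL} first with $f_s, g_t$ replaced by $f_s+|\cdot|^2/(2\Lambda)$ and $g_t+|\cdot|^2/(2\Lambda)$; the additive constants $\frac n2\log(2\pi\Lambda)$ then cancel between the two sides because of the mass equality $\mu_1(\Omega_1)=\mu_2(\Omega_2)$. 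Afterwards I will let $\Lambda\to+\infty$, using monotone convergence together with the domination hypothesis~\eqref{dominated} to control the negative parts of the log-integrals uniformly in $\Lambda$.

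For fixed $\Lambda$, I will use the hypothesis~\eqref{condPL} to transport $A$ onto the drifts. Given $\varepsilon>0$, a measurable selection argument (Kuratowski--Ryll-Nardzewski / Jankov--von Neumann, relying on the Polish structure of $\Omega_1$ and the joint measurability \MeasuCond) yields a jointly measurable family $(s,r,\omega)\mapsto u_s(r,\omega)$, adapted in $(r,\omega)$ for every fixed $s$, realizing the infimum above for $f_s$ up to $\varepsilon$. For each $(r,\omega)$ outside a negligible set, the function $s\mapsto u_s(r,\omega)$ lies in $L^2(X_1,\R^n)$, and I set $v_t(r,\omega):=\bigl(A\,u_\cdot(r,\omega)\bigr)(t)$. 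Because $A$ is continuous and preserves constants~\ConstCond\ and commutes with Bochner integration in~$r$, I obtain the identity
\[
 \Bigl(A\Bigl(s\mapsto B_r(\omega)+\int_0^r u_s(q,\omega)\,dq\Bigr)\Bigr)(t)
  = B_r(\omega)+\int_0^r v_t(q,\omega)\,dq.
\]

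Taking $r=\Lambda$ and applying~\eqref{condPL} to $\alpha(s)=B_\Lambda(\omega)+\int_0^\Lambda u_s(q,\omega)\,dq$ gives, for every $\omega$,
\[
 \int_{\Omega_2}\!g_t\!\Bigl(B_\Lambda+\int_0^\Lambda v_t(q)\,dq\Bigr)d\mu_2(t)
 \le \int_{\Omega_1}\!f_s\!\Bigl(B_\Lambda+\int_0^\Lambda u_s(q)\,dq\Bigr)d\mu_1(s).
\]
Taking expectations, bounding the left-hand side of~\eqref{conclPL} via the Borell upper bound with drift $v_t$ (integrated against $\mu_2$), and exploiting the norm condition~\NormCond\ fiberwise,
\[
 \int_{\Omega_2}|v_t(r,\omega)|^2\,d\mu_2(t)
  =\|A u_\cdot(r,\omega)\|_{L^2(X_2)}^2
  \le\int_{\Omega_1}|u_s(r,\omega)|^2\,d\mu_1(s),
\]
one gets the required comparison up to an additive $\varepsilon\mu_1(\Omega_1)$; let $\varepsilon\to0$.

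The main technical obstacle will be the measurable selection step: constructing, in a way that is jointly measurable in the external parameter $s$ and compatible with the Brownian adaptedness in $(r,\omega)$, near-minimizers of the Boué--Dupuis functional, and ensuring that $s\mapsto u_s(r,\omega)$ lies in $L^2(X_1,\R^n)$ on a full-measure set so that $A$ may be applied fiberwise and Fubini used freely. The assumptions that $f_s,g_t\ge0$ and that $g_t$ is l.s.c.\ play a dual role here: they legitimize the variational formula with its infimum actually attained in an approximate sense, and they make all integrands bounded below so that Fatou and monotone convergence can be used without sign trouble in the final limit $\Lambda\to\infty$, the integrability~\eqref{dominated} being what prevents the right-hand side from degenerating to $+\infty$ while $g_t\ge 0$ automatically bounds its side from below.
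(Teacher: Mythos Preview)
Your plan is the paper's strategy: Borell's variational representation for $-\log P_T(e^{-h})(0)$, choose near-optimal drifts $u_s$ for each $f_s$, push them through $A$ to get drifts $v_t$ for $g_t$, use~\NormCond\ on the quadratic cost and~\ConstCond\ to pass the Brownian endpoint through, apply~\eqref{condPL}, then send the Gaussian variance to infinity. The measurable-selection issue you flag is exactly the paper's Claim~6.2.

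One substantive step is missing from your sketch. The Borell/Bou\'e--Dupuis \emph{equality} you invoke for $f_s$ (to produce an $\varepsilon$-optimal drift) is only established for continuous functions with at most exponential growth; your $f_s$ are merely Borel and $\ge 0$, possibly taking the value $+\infty$. The paper handles this by an inf-convolution regularization: set $f_{s,k}(x)=\inf_{u}\bigl(f_s(x+u)+k|u|^2\bigr)$ and likewise $g_{t,k}$. These are continuous, $f_{s,k}$ inherits a quadratic (hence exponential) upper bound once one has an $L^2$ selection $\alpha_0$ with $\int f_s(\alpha_0(s))\,d\mu_1<\infty$ (itself obtained from~\eqref{dominated} via a Jankov--von~Neumann selection), and the basic assumption~\eqref{condPL} transfers to $(f_{s,k},g_{t,k})$ using~\NormCond. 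One then applies the stochastic argument to the regularized families and lets $k\to\infty$. Here is precisely where the lower semicontinuity of $g_t$ is spent: it guarantees $g_{t,k}\nearrow g_t$ pointwise, so the left-hand side is recovered; for $f_s$ one only gets $f_{s,k}\nearrow\widetilde f_s\le f_s$, but the inequality points the right way. Your explanation that l.s.c.\ ``legitimizes the variational formula'' is not quite the mechanism; without this regularization layer (or an equivalent device) the selection of $\varepsilon$-optimal drifts for raw Borel $f_s$ is not justified.
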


\begin{rmk}\label{rem:restrictions}
We can easily relax the restrictions $f_s(x), g_t(x) \ge 0$. Suppose indeed
that $f_s(x) \ge - a(s)$, $g_t(x) \ge - b(t)$, where $a(s), b(t)$ are
non-negative functions on $\Omega_1, \Omega_2$ that are $\mu_1$,
$\mu_2$-integrable respectively. Assuming, as we may, that
$\int_{\Omega_1} a \, d\mu_1 = \int_{\Omega_2} b \, d\mu_2$, we see that
the ``basic assumption"~\eqref{condPL} and the conclusion~\eqref{conclPL}
are unchanged when passing from $f_s, g_t$ to the non-negative functions
$f_s + a(s)$, $g_t + b(t)$. So, in the theorem above, we are free to
assume only that the functions are bounded from below in the way
described in the latter discussion.
\end{rmk}

\begin{rmk}
By classical arguments of measure theory, we may replace $f_s$ Borel by 
$\widetilde f_s$ \lsc\ such that $f_s \le \widetilde f_s$ and 
$\int_{\R^n} e^{-\widetilde f_s} \simeq \int_{\R^n} e^{-f_s}$, but we do
not know how to do it on the $g_t$ side in a way that~\eqref{condPL}
remains true. However, when the family $\{ g_t \}$ consists of a single
function $g$, it is easy to replace $g$ by a \lsc\ function. The real issue
is with the values of the $f_s$ functions that are equal to $+\infty$. If
$f_s, g_t$ are merely non-negative Borel functions, with $f_s$ locally
bounded, it is possible to reduce the problem to continuous functions
$\{f_s\}$ and $\{g_t\}$ by using convolution with non-negative compactly
supported continuous kernels, hence reducing this case to the preceding
theorem.

 On the other hand, it is hard to believe that our \lsc\ assumption is
necessary for the validity of Theorem~\ref{genPL1}. We rather tend to think
that the result is true for general Borel functions (as we can prove it in
the ``discrete case" of Theorem~\ref{discretePL1}; see also
Remark~\ref{RmkGtAndFi}).
\end{rmk}

 Suppose that the conditions of \SettA\ and
Theorem~\ref{genPL1} are satisfied for $\{f_s\}$ and $\{g_t\}$. Using the
``norm condition"~\NormCond, we see that the basic
assumption~\eqref{condPL} remains true if we add the same multiple
$\varepsilon |x|^2$ of $|x|^2$, $\varepsilon > 0$, to all the functions
$f_s$ and $g_t$. We can see that after this addition, the other conditions
of \SettA\ and Theorem~\ref{genPL1} remain obviously true, with
two exceptions that are either less obvious or not always true: the
semi-integrability condition~\SemiInt\ remains true because $f_s$ and $g_t$
are also assumed to be non-negative, thus
$\int_{\R^n} e^{-f_s(x) - \varepsilon |x|^2} \, dx \le C(\varepsilon)$ (and
the same for $g_t$), and if we assume $0 < \varepsilon < \varepsilon_0$,
then the condition~\eqref{dominated} remains true, with $\varepsilon_0$
replaced by $\varepsilon_0 - \varepsilon$. It follows that the
conclusion~\eqref{conclPL} holds if we replace the inside integration with
respect to the Lebesgue measure on $\R^n$ by integration with respect to
the isotropic Gaussian probability measure $\gamma_{n, \tau}$ on $\R^n$
defined by $d\gamma_{n, \tau}(x)
 = e^{-|x|^2 / 2 \tau} \, dx / (2 \pi \tau)^{n/2}$, provided
$2 \tau > \varepsilon_0^{-1}$. Actually, the proof of Theorem~\ref{genPL1}
will start with the Gaussian case and obtain the Lebesgue measure case from
it, the Lebesgue case being the ``flat" extremal case when 
$\tau \to +\infty$. Note that, indeed, the (log of the) normalization
constant $(2 \pi \tau)^{n/2}$ on the two sides of~\eqref{conclPL}
cancels out since $\mu_1$ and $\mu_2$ have the same finite mass.
\medskip

 In many applications, $X_1$ and $X_2$ are finite probability spaces, and
then we work with finite families of objects parameterized by $\Omega_1$
and $\Omega_2$, or rather by the supports $\supp(\mu_1)$ and
$\supp(\mu_2)$; in particular, the mappings $\alpha : \Omega_1 \to \R^n$
are families of $|\Omega_1|$ vectors of $\R^n$ and the linear operator
\[
 A : (\R^n)^{|\Omega_1|} \to (\R^n)^{|\Omega_2|}
\]
is norm-one for the operator norm associated to the $\ell^2$-norms weighted
by the $\mu_i$'s, with the property that the vector 
$(x, \ldots, x) \in (\R^n)^{|\Omega_1|}$ is sent to 
$(x, \ldots, x) \in (\R^n)^{|\Omega_2|}$, for every $x \in \R^n$. 
In this case, we can relax the technical assumptions that were 
made in Theorem~\ref{genPL1}.

\begin{theo}\label{discretePL1}
Under \SettA, assume in addition that $\mu_1$ and $\mu_2$ are
measures with \emph{finite support} and with 
$\mu_1(\Omega_1) = \mu_2(\Omega_2) < +\infty$. Then the
assumption~\eqref{condPL} implies~\eqref{conclPL} with no further
restriction on the functions $\{ f_s \}$ and $\{ g_t \}$. 
\end{theo}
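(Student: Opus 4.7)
The strategy is to deduce Theorem~\ref{discretePL1} from Theorem~\ref{genPL1} by a coordinated smoothing argument tailored to the finite-support setting. Writing $\mu_1 = \sum_{i=1}^m a_i \delta_{s_i}$ and $\mu_2 = \sum_{j=1}^p b_j \delta_{t_j}$ with common total mass $M$, the hypothesis~\eqref{condPL} becomes the pointwise finite-sum inequality $\sum_j b_j g_{t_j}(\beta_j) \le \sum_i a_i f_{s_i}(\alpha_i)$ whenever $(\beta_j) = A(\alpha_i)$, and~\eqref{conclPL} is its analogue for $-\log \int e^{-\cdot}$. First I would dispose of degenerate cases via~\SemiInt: if any $\int e^{-f_{s_i}} = 0$ the right-hand side of~\eqref{conclPL} equals $+\infty$; if any $\int e^{-g_{t_j}} = +\infty$, semi-integrability forces the left-hand side to be $-\infty$; and the remaining case (some $\int e^{-g_{t_j}} = 0$) is reduced by using~\eqref{condPL} to force some $\int e^{-f_{s_i}} = 0$ as well. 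So we may assume every integral lies in $(0,+\infty)$.

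Next, for $\varepsilon>0$, perturb by setting $f_{s_i}^\varepsilon = f_{s_i} + \varepsilon|\cdot|^2$ and $g_{t_j}^\varepsilon = g_{t_j} + \varepsilon|\cdot|^2$. The norm condition~\NormCond\ gives $\sum_j b_j|\beta_j|^2 \le \sum_i a_i|\alpha_i|^2$ for $\beta=A\alpha$, so~\eqref{condPL} survives; the dominated condition~\eqref{dominated} becomes automatic in the finite-support setting (it reduces to the positivity of the finitely many integrals $\int e^{-f_{s_i}(x)-\eta|x|^2}\,dx$). By monotone convergence as $\varepsilon\to 0$, it suffices to prove the conclusion for the perturbed data.

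The key step is a coordinated Gaussian convolution. Let $Z_1,\ldots,Z_m$ be i.i.d.\ $\mathcal{N}(0,\sigma^2 I_n)$, set $W_j = (AZ)_j$, and define $\tilde f_{s_i}^\sigma = f_{s_i}^\varepsilon * \gamma_\sigma$, $\tilde g_{t_j}^\sigma = g_{t_j}^\varepsilon * \pi_j$, where $\pi_j = \mathcal{L}(W_j)$. Shifting~\eqref{condPL} by $Z$ and taking expectations yields $\sum_j b_j \tilde g^\sigma_{t_j}(\beta_j) \le \sum_i a_i \tilde f^\sigma_{s_i}(\alpha_i)$ for all $\alpha$ with $\beta = A\alpha$. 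Since $\gamma_\sigma$ and each $\pi_j$ have bounded continuous densities, $\tilde f^\sigma$ and $\tilde g^\sigma$ are continuous; the layer-cake bound $\int g^-\,dx \le \int e^{-g}\,dx$ (and likewise for $f$) implies they are bounded below by some $-C_\sigma$, so a common shift by $C_\sigma$ makes them non-negative and cancels out of~\eqref{conclPL} thanks to $\mu_1(\Omega_1) = \mu_2(\Omega_2)$. By Jensen, $\int e^{-\tilde g^\sigma_{t_j}} \le \int e^{-g^\varepsilon_{t_j}}$ and $\int e^{-\tilde f^\sigma_{s_i}} \le \int e^{-f^\varepsilon_{s_i}}$. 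Applying Theorem~\ref{genPL1} to the shifted $(\tilde f^\sigma, \tilde g^\sigma)$ and letting $\sigma\to 0$, Lebesgue differentiation gives $\tilde f^\sigma \to f^\varepsilon$ and $\tilde g^\sigma \to g^\varepsilon$ almost everywhere, so by Fatou (combined with the Jensen upper bounds) the integrals converge to their originals, giving~\eqref{conclPL} in the limit; finally pass $\varepsilon\to 0$ by monotone convergence.

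The main obstacle, as the remark following Theorem~\ref{genPL1} emphasizes, is producing lower semicontinuous approximants of the $g_{t_j}$'s that preserve both the constraint~\eqref{condPL} and the value of $\int e^{-g_{t_j}}$: Vitali--Carath\'eodory majorants violate~\eqref{condPL}, while lower semicontinuous envelopes move the integral in the wrong direction. The coordinated Gaussian convolution succeeds here because the finite-support setting endows the problem with finite-dimensional structure: $AZ$ is a concrete Gaussian vector in $(\R^n)^p$ whose marginals $\pi_j$ are the natural kernels with which to smoothe the $g_{t_j}$'s compatibly with the linear action of $A$, a coupling that is unavailable in the continuous-parameter case.
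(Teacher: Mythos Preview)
Your coordinated Gaussian convolution is a nice idea, but there is a genuine gap: the convolution step collapses whenever some $f_{s_i}$ (or $g_{t_j}$) equals $+\infty$ on a set of positive Lebesgue measure --- a case that is certainly allowed in \SettA\ (think of $e^{-f_{s_i}} = \mathbf{1}_E$). Since the Gaussian kernel $\gamma_\sigma$ has full support, the convolution $\tilde f^\sigma_{s_i} = f^\varepsilon_{s_i} * \gamma_\sigma$ is then identically $+\infty$, so $\int e^{-\tilde f^\sigma_{s_i}} = 0$ for every $\sigma > 0$. Lebesgue differentiation is unavailable ($f^\varepsilon_{s_i}$ is not locally integrable), and neither Fatou nor your Jensen upper bounds can recover $\int e^{-f^\varepsilon_{s_i}} > 0$ in the limit $\sigma \to 0$. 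The same failure occurs on the $g$ side, and in this situation your verification of~\eqref{dominated} for the $\tilde f^\sigma$'s also breaks down.

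The paper confronts exactly this difficulty by inserting a truncation step \emph{before} any smoothing: one first replaces $g_j$ by $\min(g_j, N)$ and $f_i$ by $\max(f_i, -N)$, and then exploits the finite-support structure of $\mu_1, \mu_2$ (isolating one index at a time in~\eqref{condPL}) to further bound each $f_i$ from above and each $g_j$ from below. Only once all functions are bounded does one convolve --- with a compactly supported kernel --- and invoke Proposition~\ref{GaussianPropo}. Your argument can be repaired along the same lines. A secondary gap: the reduction ``$\int e^{-g_{t_j}} = 0$ for some $j$ forces $\int e^{-f_{s_i}} = 0$ for some $i$'' is not immediate from~\eqref{condPL}; it requires the Lebesgue density-point argument carried out in the paper.
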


Let us examine the first situations that one encounters when $\Omega_1$ and
$\Omega_2$ are finite, with $\mu_1(\Omega_1) = \mu_2(\Omega_2) = 1$. 
\begin{itemize}

\item
The case $|\Omega_1| = |\Omega_2| = 1$ is trivial. One has necessarily 
$A(x) = x$ by~\ConstCond, and the statement amounts to the monotonicity of
the integral. 

\item Assume $|\Omega_1| = 1$ and $|\Omega_2| = 2$. So 
$\Omega_2 = \{0, 1\}$, say, and $\mu_2 = (1-t) \delta_0 + t \delta_1$, 
$t \in [0, 1]$. The map $A : \R^n \to \R^n \times \R^n$ ought to be 
$A(x) = (x, x)$, which satisfies both required conditions, and the
statement asserts that if the functions $g_0, g_1, f$ satisfy
$(1-t) g_0(x) + t g_1(x) \le f(x)$ then 
$\bigl( \int e^{-g_0} \bigr)^{1-t} \bigl( \int e^{-g_1} \bigr)^t
 \ge \int e^{-f}$. This is just H\"older's inequality. 

\item Assume $|\Omega_1| = 2$ and $|\Omega_2| = 1$. Here 
$\Omega_1 = \{0, 1\}$, say, and $\mu_1 = (1-t) \delta_0 + t \delta_1$, 
$t \in [0, 1]$. Then we can take $A : \R^n \times \R^n \to \R^n$ to be 
$A(x_0, x_1) = (1-t) x_0 + t x_1$ (actually this is the only possible
choice, as explained in Remark~\ref{noexotic}). This map will satisfy the
required assumptions, by the convexity of the norm squared. The statement
asserts that if the functions $g, f_0, f_1$ satisfy
$g((1-t) x_0 + t x_1) \le (1-t) f_0(x_0) + t f_1(x_1)$ for all 
$(x_0, x_1) \in \R^n \times \R^n$, then 
$\int e^{-g}
 \ge \bigl( \int e^{-f_0} \bigr)^{1-t} \bigl( \int e^{-f_1} \bigr)^t$. 
This is the Pr\'ekopa-Leindler inequality. 

\item Assume $|\Omega_1| = |\Omega_2| = 2$. This is the first case where
something new appears. Let us illustrate this by an example. Assume
$\Omega_1 = \Omega_2 = \{0, 1\}$ and 
$\mu_1 = \mu_2 = \frac12 \delta_0 + \frac12 \delta_1$. Consider the map 
$A : \R^n \times \R^n \to \R^n \times \R^n$ defined by
\[
 A(x_0, x_1) = ( 2 x_0 / 3 + x_1 / 3, x_0 / 3 + 2 x_1 / 3).
\]
We are in \SettA\ since $A(x, x) = (x, x)$ and 
$| 2 x_0 / 3 + x_1 / 3|^2 + | x_0 / 3 + 2 x_1 / 3|^2
 \le |x_0|^2 + |x_1|^2 $. The abstract statement then reduces exactly to
Proposition~\ref{propM} above. 

\end{itemize}

 In the next section, we present Borell's approach and establish in
Proposition~\ref{GaussianPropo} a Gaussian version of Theorem~\ref{genPL1};
the proof of this proposition can be considered as the heart of the present
paper. In Section~\ref{ProofMainTheo}, we present the proofs of 
Theorems~\ref{genPL1} and~\ref{discretePL1}. Then, in the following
section, we discuss some consequences of it. In Section~\ref{GeneBL} we
present a generalization of Theorem~\ref{genPL1}, when the functions $f_s$
and $g_t$ live on Euclidean spaces of different dimensions. The result will
include as particular cases the Brascamp--Lieb inequality (in the geometric
form) and its reverse form devised by Franck Barthe~\cite{Barthe:1998gu}.
Several technical proofs that have little geometric interest, and involve
mostly measure theoretic arguments, are gathered in Section~\ref{technical}.

\section{Borell stochastic approach and Gaussian inequality}
Borell's stochastic proof of the Pr\'ekopa--Leindler inequality relies on
the representation formula given in the next lemma. Let $(B_r)_{r \ge 0}$
be a standard Brownian motion with values in $\R^n$, starting at $0$, with
filtration $\mathcal F = ({\mathcal F}_r)_{r \ge 0}$. Let
$P_r = e^{r \Delta / 2}$, $r \ge 0$, be the heat semigroup on $\R^n$
associated with this Brownian motion,
\[
   (P_r f)(x) 
 = \E f(x + B_r)
 = \int_{\R^n} f(x + y) e^{ - |y|^2 / (2 r)} 
    \, \frac {dy} {(2 \pi r)^{n / 2}},
 \quad x \in \R^n,
\]
for $f$ bounded and continuous on $\R^n$ and $r > 0$. An $\R^n$-valued
drift $u = \{ u_r \}_{r \le T}$ will be called \emph{of class $D_2$} on
$[0, T]$ if it is $\mathcal F$-progressively measurable on $[0, T]$ and
\[
 \E \int_0^T |u_r|^2 \, dr < +\infty.
\]

\begin{lemma} \label{LBorell}
Let $T > 0$ be fixed. For every bounded continuous real function 
$f : \R^n \to \R$, we have
\begin{equation}
\label{borell}
   - \log P_T \left( e^{-f} \right)(0) 
 = \inf_u \E \left[ 
              f \Bigl( B_T + \int_0^T u_r \, dr \Bigr)
               + \frac 1 2 \int_0^T |u_r |^2 \, dr
             \right],
\end{equation}
where the infimum is taken over $\R^n$-valued drifts $\{ u_r \}_{r \le T}$
of class $D_2$. Moreover, the infimum is attained. 
\end{lemma}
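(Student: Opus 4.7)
The plan is to establish (\ref{borell}) via the HJB--Itô identity that underlies Borell's approach. Introduce the value function $\phi(r, x) := -\log P_{T-r}(e^{-f})(x)$ for $(r, x) \in [0, T]\times \R^n$. Since $f$ is bounded and continuous, $e^{-f}$ is bounded between two positive constants, so $P_{T-r}(e^{-f})$ is smooth and bounded away from zero on each strip $[0, T-\varepsilon]\times\R^n$, and $\phi$ inherits smoothness with bounded derivatives there. Differentiating the heat equation $\partial_r P_{T-r}(e^{-f}) = -\tfrac{1}{2}\Delta P_{T-r}(e^{-f})$ and using $\nabla V = -V\nabla\phi$, $\Delta V/V = |\nabla\phi|^2 - \Delta\phi$ with $V = e^{-\phi}$, one obtains the Hamilton--Jacobi--Bellman identity
\[
 \partial_r\phi + \tfrac{1}{2}\Delta\phi = \tfrac{1}{2}|\nabla\phi|^2, \qquad \phi(T,\cdot) = f,
\]
and in particular $\phi(0, 0) = -\log P_T(e^{-f})(0)$, the left-hand side of (\ref{borell}).

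Given any drift $u$ of class $D_2$, set $X_r := B_r + \int_0^r u_s\,ds$, so that $dX_r = dB_r + u_r\,dr$ and $\langle X\rangle_r = rI_n$. Applying Itô's formula to $\phi(r, X_r)$ and substituting the HJB relation, the drift terms combine into a perfect square:
\[
 d\phi(r, X_r) = \tfrac{1}{2}\bigl|\nabla\phi(r, X_r) + u_r\bigr|^2\,dr - \tfrac{1}{2}|u_r|^2\,dr + \nabla\phi(r, X_r)\cdot dB_r.
\]
Integrating over $[0, T]$, using $\phi(T, X_T) = f(X_T)$ and $\phi(0, 0) = -\log P_T(e^{-f})(0)$, and (under suitable integrability) taking expectations annihilates the stochastic integral, giving
\[
 \E\Bigl[f(X_T) + \tfrac{1}{2}\!\int_0^T\! |u_r|^2\,dr\Bigr] = -\log P_T(e^{-f})(0) + \tfrac{1}{2}\E\!\int_0^T\! \bigl|\nabla\phi(r, X_r) + u_r\bigr|^2\,dr.
\]
This yields the $\ge$ direction of (\ref{borell}) and makes the equality case transparent: one needs $u_r = -\nabla\phi(r, X_r)$ almost surely.

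To produce such an optimizer and secure the reverse inequality, I would solve the SDE $dY_r = dB_r - \nabla\phi(r, Y_r)\,dr$ with $Y_0 = 0$ and then set $u^*_r := -\nabla\phi(r, Y_r)$; note $X^{u^*} = Y$ by construction. The main obstacle is existence and the integrability needed to declare the stochastic integral a true martingale, because $\nabla\phi$ is only locally bounded on $[0, T)\times\R^n$ and a priori may blow up as $r \uparrow T$: standard heat-kernel estimates give only $\|\nabla\phi(r,\cdot)\|_\infty \lesssim (T-r)^{-1/2}\,\mathrm{osc}(f)$ when $f$ is merely bounded. I would circumvent this by first solving on $[0, T-\varepsilon]$ via classical strong existence (the drift is Lipschitz in $x$ uniformly on each such strip), observing that $\E\int_0^{T-\varepsilon}|u^*_r|^2\,dr$ is controlled uniformly in $\varepsilon$ by the identity of the previous paragraph (since $\|f\|_\infty < \infty$), and then passing to the limit $\varepsilon \downarrow 0$ using continuity of $\phi$ at $r = T$ inherited from the continuity of $f$. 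This delivers $u^* \in D_2$ achieving equality in (\ref{borell}), completing the proof.
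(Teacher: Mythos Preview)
Your proposal is correct and follows essentially the same route as the paper's proof: both introduce the value function $\phi(r,x)=-\log P_{T-r}(e^{-f})(x)$ (the paper's $f_r$), derive the HJB relation, apply It\^o to obtain the perfect-square identity $dM_r=\nabla f_r(X_r^u)\cdot dB_r+\tfrac12|\nabla f_r(X_r^u)+u_r|^2\,dr$ for $M_r=f_r(X_r^u)+\tfrac12\int_0^r|u_\rho|^2\,d\rho$, and then handle the possible blow-up of $\nabla\phi$ at $r=T$ by working on $[0,T-\varepsilon]$ and passing to the limit using the uniform bound $\E\int_0^{T-\varepsilon}|u^*_\rho|^2\,d\rho\le 2f_0(0)$. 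The only cosmetic difference is that the paper first treats $f$ with bounded second derivatives (so the optimal SDE and the martingale property are immediate on all of $[0,T]$) and then reduces the bounded continuous case to this via $f_{T-\varepsilon}=-\log P_\varepsilon e^{-f}$, whereas you go directly to the $\varepsilon$-truncation; your ``under suitable integrability'' for the inequality direction is exactly what the paper's $[0,T_\varepsilon]$ argument supplies.
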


\begin{proof}[Proof {\rm (see Borell~\cite{Borell:2000p204})}]
We begin by assuming that $f$ is bounded and has bounded derivatives of
order $\le 2$. For a drift $u = \{ u_r \}_{r \le T}$ of class $D_2$, we
define $X^u_r := B_r + \int_0^r u_\rho \, d\rho$, which satisfies the
stochastic differential equation 
\[
 X^u_0 = 0, \qquad dX_r^u = dB_r + u_r \, dr
\]
on the interval $[0, T]$. For $0 \le r \le T$, define $f_r = f^T_r$ by 
$e^{- f_r } = P_{T-r} e^{-f}$, that is to say
\begin{equation} \label{Fr}
 f_r (x) = -\log (P_{T-r} e^{-f})(x),
 \quad x \in \R^n.
\end{equation}
The function $(r, x) \mapsto f_r(x)$ on $(0, T) \times \R^n$ satisfies the
partial differential equation
\[
   \partial_r f_r
 = - \frac 1 2 \Delta f_r + \frac 1 2 |\nabla f_r |^2.
\]
By a direct application of the It\=o formula, we see that the process
\[
    M_r
 := f_r(X_r^u) + \frac 1 2 \int_0^r |u_\rho|^2 \, d\rho,
 \qquad r \in [0, T],
\]
is a submartingale for any drift $u$ in $D_2$, since
\[
   dM_r
 = \nabla f_r (X_r^u) \cdot dB_r
    + \frac 1 2 \bigl| \nabla f_r (X_r^u) + u_r \bigr|^2 dr.
\]
This implies the ``inequality case" of~\eqref{borell}, namely, an upper
bound of $-\log (P_T e^{-f})(0)$ for every drift $u$ in $D_2$. Indeed, note
that $M_0 = f_0(X_0) = -\log P_T(e^{-f})(0)$ and $f_T = f$, and so the
inequality in~\eqref{borell} immediately follows by considering $\E M_r$ at
$r = 0$ and $r = T$. Moreover, if $u_r = -\nabla f_r (X_r^u)$, {\it i.e.}, 
if $X_r^u$ is the process solving the stochastic differential equation
\begin{equation}
 \label{sde}
 X_0 = 0, \quad
 dX_r = dB_r - \nabla f_r (X_r) \, dr,
\end{equation}
then $M_r$ becomes a martingale, thus giving an equality case
in~\eqref{borell}.

 Assume now that $f$ is bounded and continuous on $\R^n$. Adding a constant
to $f$, we may suppose that $f \ge 0$. Define $f_r$ as in~\eqref{Fr}, and
note that $f_r$ is bounded above and $\ge 0$. Since $f$ is bounded, 
$P_r e^{-f}$ is bounded away from $0$, so the function 
$f_{T - \varepsilon} = - \log \bigl( P_\varepsilon e^{-f} \bigr)$ has
bounded derivatives of all orders, for every $\varepsilon \in (0, T]$.
Writing $T_\varepsilon = T - \varepsilon$ and
$  e^{-f_r} 
 = P_{T_\varepsilon - r} (P_\varepsilon e^{-f})
 = P_{T_\varepsilon - r} (e^{-f_{T_\varepsilon}})$, 
$0 \le r \le T_\varepsilon$, we are back to the ``good setting" on 
$[0, T_\varepsilon]$. Hence the optimal representation by the martingale
\[
   M_r 
 = f_r \Bigl( B_r + \int_0^r u_\rho \, d\rho \Bigr)
    + \frac 1 2 \int_0^r |u_\rho|^2 \, d\rho
 = M_0 + \int_0^r \nabla f_\rho(X_\rho) \cdot dB_\rho,
\]
with $M_0 = f_0(0)$, $u_r = - \nabla f_r(X_r)$ and 
$X_r = B_r + \int_0^r u_\rho \, d\rho$, is valid for $r \le T_\varepsilon$.
Since $f_r \ge 0$, we see that $\int_0^r |u_\rho|^2 \, d\rho \le 2 M_r$ and
we obtain that
\[
     \E |M_r - M_0|^2 
  =  \E \Bigl( \int_0^r |\nabla f_\rho(X_\rho)|^2 \, d\rho \Bigr)
  =  \E \Bigl( \int_0^r |u_\rho|^2 \, d\rho \Bigr)
 \le 2 \E M_r
  =  2 f_0(0). 
\]
We have $T^{-1} \E \bigl( \int_0^T |u_\rho| \, d\rho \bigr)^2
 \le \E \int_0^T |u_\rho|^2 \, d\rho \le 2 f_0(0)$, hence $u \in D_2$,
$X_r$ converges a.s. and in $L^2$ to 
$X_T = B_T + \int_0^T u_\rho \, d\rho$. We see that $(M_r)_{r < T}$ is an
$L^2$-bounded martingale, thus $M_r$ converges in $L^2$-norm, as $r \to T$,
to a limit $M_T$ such that $\E M_T = M_0$. On the other hand, $M_r$
converges almost surely to
\begin{equation} \label{converges}
 f \Bigl( B_T + \int_0^T u_\rho \, d\rho \Bigr)
  + \frac 1 2 \int_0^T |u_\rho|^2 \, d\rho
\end{equation}
when $r \to T$ since $f_r$ converges locally uniformly to the bounded
continuous function $f$. This implies that $M_T$ is equal to the expression
in~\eqref{converges}, hence the expectation of~\eqref{converges} is
equal to $M_0 = f_0(0)$, that is, it implies formula~\eqref{borell} with
equality. 

 In the inequality case of~\eqref{borell}, the drift $u$ is in $D_2$ by
assumption, hence $(X_r^u)_{r < T}$ defined as above converges a.s. and in
$L^2$ to $X_T^u$, as $r \to T$. The submartingale $(M_r)_{r < T}$ is
$L_2$-bounded, hence converges a.s. and in $L^2$ to the expression
in~\eqref{converges}, and the result follows.
\end{proof}

\begin{rmk}
If $f$ is bounded and upper semicontinuous on $\R^n$, then for every 
$x \in \R^n$ and $\varepsilon > 0$, there is $r_0 < T$ and a neighborhood
$V$ of $x$ such that $(P_{T-r} e^{-f})(y) > e^{-f(x) - \varepsilon}$ for 
$y \in V$, $r_0 < r < T$, that is to say, 
$(a) : f_r(y) < f(x) + \varepsilon$. When $f$ is lower semicontinuous, the
inequality is reversed, $(b) : f_r(y) > f(x) - \varepsilon$. When $f$ is
\usc\ it follows from $(a)$ that in the inequality case
of~\eqref{borell}, the limit of $(M_r)_{r < T}$ as $r \to T$ is less than
or equal to the expression in~\eqref{converges}, so the inequality case
remains true. For every bounded Borel function $f$, we can find 
$\widetilde f$ \usc\ such that $\widetilde f \le f$ and
$\int_{\R^n} e^{- \widetilde f} \, d\gamma
 \simeq \int_{\R^n} e^{- f} \, d\gamma$. This implies that the inequality
case in~\eqref{borell} is true for every bounded Borel function $f$. Using
the reverse inequality $(b)$ for \lsc\ functions, we see that the equality
in~\eqref{borell} remains true for bounded lower semicontinuous functions
(but the infimum need not be achieved by an optimal martingale).
\end{rmk}

 We shall need more than the case of bounded functions. The proof of the
next lemma will be given in Section~\ref{technical}; it requires a closer
look at the argument given above.

\begin{lemma} \label{ExpoBoundLemma}
Formula~\eqref{borell} remains valid when $f$ is continuous, bounded
below and satisfies an exponential upper bound of the form
\begin{equation} \label{ExponeBound}
 f(x) \le a e^{b |x|},
 \quad a, b \ge 0,
 \quad x \in \R^n.
\end{equation}
The ``inequality case" in~\eqref{borell} is valid for any bounded below
continuous function $f$.
\end{lemma}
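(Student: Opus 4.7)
The plan is a truncation argument based on Lemma~\ref{LBorell}. Writing $C := -\inf f < +\infty$, I would set $f_N := \min(f, N)$ for each integer $N$, obtaining a pointwise increasing sequence of bounded continuous functions with $-C \le f_N \le N$. For the \emph{inequality case} (which requires no exponential upper bound), fix any drift $u \in D_2$ and set $X^u_T := B_T + \int_0^T u_r\, dr$. Lemma~\ref{LBorell} applied to $f_N$ yields
\[
     -\log P_T(e^{-f_N})(0)
 \le \E f_N(X^u_T) + \frac{1}{2} \E \int_0^T |u_r|^2 \, dr.
\]
Sending $N \to \infty$: the left-hand side increases to $-\log P_T(e^{-f})(0)$ by monotone convergence on $e^{-f_N} \searrow e^{-f}$ (dominated by $e^C$), while $\E f_N(X^u_T) \nearrow \E f(X^u_T) \in [-C, +\infty]$ by monotone convergence on $f_N(X^u_T)$, uniformly minorized by $-C$. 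This settles the inequality case for any continuous $f$ bounded below.

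For the \emph{equality case}, I would reprise the martingale argument from the proof of Lemma~\ref{LBorell} directly for $f$ rather than for a truncation. Since $e^{-f}$ is continuous and bounded by $e^C$, the function $f_r := -\log P_{T-r}(e^{-f})$ is well-defined and $C^\infty$ on $[0, T) \times \R^n$, solves $\partial_r f_r = -\frac{1}{2} \Delta f_r + \frac{1}{2} |\nabla f_r|^2$, and satisfies $f_r \ge -C$. A short Gaussian convolution estimate based on $e^{-f(y)} \ge \exp(- a\, e^{b|y|})$ shows that $f_r$ inherits an exponential upper bound of the same form, with $T$-dependent constants and uniformly in $r \in [0, T]$. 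I would then consider the SDE~\eqref{sde} with drift $u_r = -\nabla f_r(X_r)$ on $[0, T_\varepsilon]$ (with $T_\varepsilon := T - \varepsilon$) and localize using the exit times $\tau_k := \inf \{ r : |X_r| \ge k \}$: on $[0, \tau_k \wedge T_\varepsilon]$ all coefficients are bounded, so Itô's formula produces a true martingale $M_r = f_r(X_r) + \frac{1}{2} \int_0^r |u_\rho|^2 \, d\rho$ with $\E M_{\tau_k \wedge T_\varepsilon} = f_0(0) = -\log P_T(e^{-f})(0)$. Using $f_r \ge -C$ yields the uniform bound $\E \int_0^{\tau_k \wedge T_\varepsilon} |u_\rho|^2 \, d\rho \le 2 (f_0(0) + C)$; sending $k \to \infty$ and then $\varepsilon \to 0$ would give that the SDE has a non-exploding solution on $[0, T)$, that $u \in D_2$, and that $(M_r)_{r < T}$ is $L^2$-bounded with $\E M_r = f_0(0)$ for all $r < T$.

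To conclude, $M_r$ converges in $L^2$ to some $M_T$ with $\E M_T = f_0(0)$, and $X_r \to X_T := B_T + \int_0^T u_\rho \, d\rho$ in $L^2$. The standard fact that $P_{T-r}(e^{-f}) \to e^{-f}$ locally uniformly as $r \to T$ (heat semigroup applied to a bounded continuous function) gives $f_r \to f$ locally uniformly, so $f_{r_k}(X_{r_k}) \to f(X_T)$ almost surely along a subsequence on which $X_{r_k} \to X_T$ a.s., identifying $M_T = f(X_T) + \frac{1}{2} \int_0^T |u_\rho|^2 \, d\rho$ and yielding equality in~\eqref{borell}, with the infimum attained by $u$. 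The \emph{main obstacle} I expect is the localization step: verifying non-explosion of the SDE together with the uniform-in-$(k, \varepsilon)$ martingale estimates in a situation where $f_r$ is smooth but unbounded, with possibly unbounded derivatives. This is exactly where the exponential upper bound on $f$ is essential, since it controls the growth of $f_r$ at infinity so that the Itô bookkeeping survives passage to the limit, mirroring in a more delicate form the $L^2$ estimate that drove the proof of Lemma~\ref{LBorell}.
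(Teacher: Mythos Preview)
Your treatment of the inequality case---truncating to $f_N = \min(f,N)$, applying Lemma~\ref{LBorell}, and passing to the limit by monotone convergence---is exactly the paper's argument.

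For the equality case the paper takes a different route. Rather than building the optimal drift for $f$ directly, it keeps the truncation $\varphi = \min(f,N)$, takes the \emph{optimal} drift $u$ for $\varphi$ (available from Lemma~\ref{LBorell}), and shows that this same $u$ is nearly optimal for $f$. The whole point is to control $\delta := \E\bigl[f(X_T)-\varphi(X_T)\bigr]$, and this is where the exponential hypothesis enters: applying the exponential martingale with $\lambda=1/2$ gives $\E e^{S_T^2/8}\le e^{\Phi_0/2}$, hence exponential integrability of $|X_T|$; combining with $f(x)\le a e^{b|x|}$, Cauchy--Schwarz and Markov yields the explicit bound
\[
 \delta \;\le\; a\,\sqrt{F_0/N}\; e^{\,12 b^2 T + (F_0+n)/8},
\]
so $\delta\to 0$ as $N\to\infty$.

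Your direct SDE-with-localization approach is sound, but notice that as you have outlined it, it never actually \emph{uses} the exponential bound: the only inputs are $f_r\ge -C$ (giving the $L^2$ drift estimate and non-explosion via $|X_{\tau_k}|\le |B_{\tau_k}|+\sqrt{T}\,S_{\tau_k}$) and the local uniform convergence $f_r\to f$ (to identify $M_T$). So your closing remark that the exponential bound is ``essential'' for your scheme is off---it is essential for the paper's truncation scheme, not for yours. What the paper gains from its route is precisely the displayed quantitative estimate, which is reused verbatim in Claim~\ref{measurability} to pick the truncation level $N_k$ \emph{uniformly} over $s\in A_k$ and thereby obtain a $\Sigma_1$-measurable family of near-optimal drifts; your construction would instead require proving directly that $s\mapsto(\text{solution of the SDE for }f_s)$ is measurable. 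On the other hand, your approach has the bonus of producing a drift that actually \emph{attains} the infimum, whereas the paper's argument only delivers $\varepsilon$-optimal drifts for unbounded~$f$.
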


 The next proposition provides a rather direct and simple link between
Borell's lemma and our Theorem.

\begin{prop} \label{GaussianPropo}
Under \SettA, assume that $f_s$, $g_t$ are continuous $\ge 0$ on $\R^n$
(or bounded from below as in Remark~\ref{rem:restrictions}), and that $f_s$
satisfies the following exponential bound: there are non-negative
measurable functions $a(s), b(s)$ on $\Omega_1$ such that
\begin{equation}\label{exponebound}
 f_s(x) \le a(s) e^{ b(s) |x|},
 \quad x \in \R^n.
\end{equation}
Then, for every isotropic Gaussian probability measure 
$\gamma = \gamma_{n, \tau}$ on $\R^n$, the basic assumption~\eqref{condPL}
implies that
\begin{equation} \label{conclGL}
     \int_{\Omega_2} \!
      -\log \left( \int_{\R^n} e^{-g_t(x)} \, d\gamma(x) \right)
       \, d\mu_2(t) 
 \le \int_{\Omega_1} \! 
      -\log \left( \int_{\R^n} e^{-f_s(x)} \, d\gamma(x) \right)
       \, d\mu_1(s).
\end{equation}
\end{prop}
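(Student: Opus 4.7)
The plan is to use Borell's variational formula in the form of Lemma~\ref{ExpoBoundLemma} to write each $-\log$-integral as an infimum over drifts, and then to transport a family of near-optimal drifts for the $f_s$-side to drifts for the $g_t$-side via the operator $A$. Since $\int_{\R^n} e^{-f} \, d\gamma_{n,\tau} = (P_\tau e^{-f})(0)$, formula~\eqref{borell} reads
\[
 -\log \int_{\R^n} e^{-f_s} \, d\gamma
 = \inf_u \E \Bigl[ f_s\Bigl(B_\tau + \int_0^\tau u_r \, dr \Bigr) + \tfrac{1}{2} \int_0^\tau |u_r|^2 \, dr \Bigr]
\]
as an equality for each $f_s$ (continuous, $\ge 0$, satisfying~\eqref{exponebound}), and, dropping the infimum, as an inequality valid for every drift on the $g_t$-side (each $g_t$ being continuous and $\ge 0$).

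For each $s \in \Omega_1$ I would select, up to error $\varepsilon$, a nearly optimal drift $u^s = \{u^s_r\}_{r \le \tau}$ of class $D_2$ for $f_s$, and crucially arrange that the map $(s, r, \omega) \mapsto u^s_r(\omega)$ is jointly measurable, with $(r, \omega) \mapsto u^s_r(\omega)$ progressively measurable for each $s$. This is where the Polish structure of $\Omega_1$ and the joint measurability hypothesis~\MeasuCond\ enter, via a measurable selection argument. I then set
\[
 \alpha(\omega)(s) := B_\tau(\omega) + \int_0^\tau u^s_r(\omega) \, dr,
 \qquad
 v^t_r(\omega) := \bigl(A(s \mapsto u^s_r(\omega))\bigr)(t).
\]
Commuting $A$ with the Bochner integral in $r$ (by linearity and continuity), and invoking condition~\ConstCond\ on the constant function $s \mapsto B_\tau(\omega)$, gives $(A\alpha)(\omega)(t) = B_\tau(\omega) + \int_0^\tau v^t_r(\omega) \, dr$. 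The norm condition~\NormCond\ combined with Fubini yields
\[
 \E \int_{\Omega_2} \int_0^\tau |v^t_r|^2 \, dr \, d\mu_2(t)
 \;\le\; \E \int_{\Omega_1} \int_0^\tau |u^s_r|^2 \, dr \, d\mu_1(s).
\]

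Applying the basic assumption~\eqref{condPL} pointwise in $\omega$ to $\alpha(\omega)$, adding half of the preceding energy inequality, taking $\E$, and using Fubini--Tonelli (all integrands non-negative) to exchange $\E$ with $\int d\mu_1$ and $\int d\mu_2$: on the $g_t$-side, the inequality case of Borell's formula bounds each $t$-slice from below by $-\log \int e^{-g_t} \, d\gamma$; on the $f_s$-side, near-optimality of $u^s$ bounds each $s$-slice from above by $-\log \int e^{-f_s} \, d\gamma + \varepsilon / \mu_1(\Omega_1)$. Letting $\varepsilon \to 0$ produces~\eqref{conclGL}.

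The main obstacle is the measurable selection step: producing a jointly measurable family $\{u^s\}$ that is progressively measurable and nearly optimal for each $s$. Secondary technical points --- progressive measurability of the transported drifts $v^t$, legitimacy of the Fubini exchanges, and the trivial case where the right-hand side of~\eqref{conclGL} equals $+\infty$ --- remain under control thanks to the non-negativity of $f_s, g_t$ and the exponential bound~\eqref{exponebound}, which prevents the optimal energies from blowing up.
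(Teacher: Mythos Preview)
Your proposal is correct and follows essentially the same route as the paper: Borell's formula from Lemma~\ref{ExpoBoundLemma} on both sides, a measurably selected family of near-optimal drifts for the $f_s$, transport via $A$ using~\NormCond\ and~\ConstCond, then the pointwise basic assumption~\eqref{condPL}. The paper likewise isolates the measurable selection of the drifts as the delicate point and defers it to a separate claim (Claim~\ref{measurability}).
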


 As seen from the proof below of Theorem~\ref{discretePL1} (which indeed
reduces the study to the situation of this Proposition), we can remove the
additional assumptions on $f_s$ and $g_t$ when the measures $\mu_1$ and
$\mu_2$ have finite support. 

\begin{proof}
The proof will be an application of formula~\eqref{borell}, as extended
in Lemma~\ref{ExpoBoundLemma}, to the functions $f_s$ and $g_t$. We may
assume that the right-hand side of~\eqref{conclGL} is not equal to
$+\infty$. The Gaussian measure $\gamma = \gamma_{n, \tau}$ is the
distribution of $B_T$ for $T = \tau > 0$, so that
\[
   - \log \Bigl( \int_{\R^n} e^{- f_s(x)} \, d\gamma(x) \Bigr)
 = - \log (P_T e^{-f_s})(0) 
 < +\infty,
 \quad
 \mu_1\text{-a.e.}
\]
Let $\varepsilon > 0$ be given. By Lemma~\ref{ExpoBoundLemma}, for almost
every $s \in \Omega_1$, we can introduce $U(s) \in D_2$, an almost optimal
drift for the function $f_s$, $s \in \Omega_1$, namely, a drift
$U(s) = \{ U_r(s) \}_{0 \le r \le T}$ such that
\begin{equation} \label{DefiDrift}
   \E \Bigl[ f_s \Bigl( B_T + \int_0^T U_r(s) \, dr \Bigr)
        + \frac 1 2 \int_0^T |U_r(s)|^2 \, dr \Bigr]
 < - \log (P_T e^{-f_s})(0) + \varepsilon.
\end{equation}
We will show later (Claim~\ref{measurability}) that this process 
$\{ U(s) \}_{s \in \Omega_1}$ can be chosen to be $\Sigma_1$-measurable. 
Note also that~\eqref{DefiDrift} and $f_s \ge 0$ ensure that
\begin{align}
     &\int_{\Omega_1}
      \E \int_0^T |U_r(s)|^2 \, dr \, d\mu_1(s)
 \label{pourquoiL2}
 \\
 \le & \, 2 \Bigl( \int_{\Omega_1} - \log (P_T e^{-f_s})(0) \, d\mu_1(s)
          + \varepsilon \mu_1(\Omega_1) \Bigr)
  <  + \infty.
 \notag
\end{align}
Assume that the Brownian motion $(B_r)$ is defined on a probability space 
$(E, \mathcal{A}, \Proba)$. We have a $\R^n$-valued random process $U$,
that will be denoted by one of
\[
   U(s, r, \omega)
 = U_r(s)(\omega) 
 = U_r(s, \omega) 
 = U_{r, \omega}(s),
 \quad s \in \Omega_1, \, r \in [0, T], \, \omega \in E.
\] 
By~\eqref{pourquoiL2}, we know that
\[
   U \in L^2(\Omega_1 \times [0, T] \times E, \R^n)
 = L^2([0, T] \times E, L^2(\Omega_1, \R^n)). 
\]
We shall estimate
$P_T(e^{-g_t})$ using the inequality case of formula~\eqref{borell} given
by Lemma~\ref{ExpoBoundLemma}, with the drift
$\{ V_r(t) \}_{r \le T} = \{ A U_r(t) \}_{r \le T}$, namely
\begin{equation} \label{OneEq}
    V_{r, \omega} (t) 
 := A (U_{r, \omega}) (t)
  = A \bigl( s \mapsto U_{r,\omega}(s) \bigr) (t)
 \in \R^n,
 \quad
 t \in \Omega_2,
\end{equation}
where $U_{r, \omega} \in L^2(X_1, \R^n)$ for almost every $r, \omega$. We
shall use the basic assumption~\eqref{condPL} on the families $\{ f_s \}$
and $\{ g_t \}$ for the random function
$\alpha_\omega = B_T(\omega) + \beta_\omega$, where $\beta_\omega$ is
defined by
\begin{equation} \label{TwoEq}
    \beta_\omega (s)
 := \int_0^T U_r(s, \omega) \, dr,
\end{equation}
and where we consider $B_T(\omega)$ as a constant function of the $s$
variable. We know by~\eqref{pourquoiL2} that $\beta_\omega$ (and
$\alpha_\omega$) are in $L^2(X_1, \R^n)$ for almost every $\omega$. The
constant functions condition~\ConstCond\ on $A$ ensures that
\begin{align}
   (A \alpha_\omega)(t)
 &= A (B_T(\omega) + \beta_\omega) (t) 
 = B_T(\omega) + (A \beta_\omega) (t) 
 \label{Adrift}
 \\
 &= B_T(\omega) + \int_0^T V_r (t, \omega) \, dr.
 \notag
\end{align}

 As we said, we apply the inequality case of formula~\eqref{borell} for
$g_t$ with the drift $\{V_r\}$ and then we integrate in $t$, in order to
get that 
\begin{align*}
     &\int_{\Omega_2} - \log \bigl( P_T e^{- g_t} \bigr)(0) \, d\mu_2(t)
 \\
 \le &\int_{\Omega_2} \E \left[ g_t \Bigl(
                       B_T + \int_0^T V_r(t) \, dr \Bigr)
                 + \frac 1 2 \int_0^T |V_r(t) |^2 \, dr \right] \, d\mu_2(t).
\end{align*}
For future reference, we state an obvious consequence of~\eqref{Adrift},
\begin{align}
    g_t \bigl( (A \alpha_\omega) (t) \bigr)
  &= g_t \bigl( A (B_T(\omega) + \beta_\omega) (t) \bigr)
 \label{IsEnough}
   = g_t \bigl( B_T(\omega) + (A \beta_\omega) (t) \bigr)
 \\
  &= g_t \Bigl( B_T(\omega) + \int_0^T V_r (t, \omega) \, dr \Bigr).
 \notag
\end{align}
Using~\eqref{condPL} and~\eqref{IsEnough}, we have on the set $E$ the
pointwise inequality
\[
     \int_{\Omega_2} \! \!
      g_t \Bigl( B_T(\omega)
       + \int_0^T \! V_r(t, \omega) \, dr \Bigr) d\mu_2(t)
 {\le} \int_{\Omega_1} \! \!
      f_s \Bigl( B_T(\omega)
       + \int_0^T \! U_r(s, \omega) \, dr \Bigr) d\mu_1(s),
\]
and since $\|A\| \le 1$, we have another pointwise inequality, for every 
$r \in [0, T]$,
\begin{equation} \label{FourEq}
     \int_{\Omega_2} |V_r(t, \omega)|^2 \, d\mu_2(t) 
 \le \int_{\Omega_1} |U_r(s, \omega)|^2 \, d\mu_1(s).
\end{equation}
Finally
\begin{align*}
     &\int_{\Omega_2} - \log \bigl( P_T e^{- g_t} \bigr)(0) \, d\mu_2(t)
 \\
 \le &\int_{\Omega_1} 
      \E \left[ f_s \Bigl( B_T + \int_0^T U_r(s) \, dr \Bigr)
                 + \frac 1 2 \int_0^T |U_r(s) |^2 \, dr \right] \, d\mu_1(s)
\\
  <  &\int_{\Omega_1} - \log \bigl( P_T e^{- f_s} \bigr)(0) \, d\mu_1(s)
      + \varepsilon \mu_1(\Omega_1).
\end{align*}
We conclude by letting $\varepsilon \to 0$.

As the reader has noticed, the proof is rather short, provided one has put
forward the abstract properties contained in the four
equations~\eqref{OneEq}, \eqref{TwoEq}, \eqref{IsEnough} and~\eqref{FourEq}
that allow us to run Borell's argument.
\end{proof}

\section{Proofs of Theorems~\ref{genPL1} and~\ref{discretePL1} }%
\label{ProofMainTheo}

The proofs rely on Proposition~\ref{GaussianPropo}.  We shall explain how
to pass from a Gaussian measure to the Lebesgue measure, and how to
approximate our functions in order to meet the required technical
assumptions (continuity and lower/upper bounds).

\subsection{Proof of Theorem~\ref{discretePL1}}

Under \SettA\ in the discrete case, assume that 
$f_0, f_1, \ldots, f_p$ and $g_0, g_1,$ $\ldots,$ $g_q$ are Borel functions
from $\R^n$ to $\R \cup \{+\infty\}$, and that for all points 
$x_0, x_1, \ldots, x_p$ in $\R^n$ we have
\begin{equation} \label{hypote}
     \sum_{j=0}^q \nu_j g_j \Bigl( \sum_{i=0}^p a_{j, i} x_i \Bigr) 
 \le \sum_{i=0}^p \mu_i f_i(x_i),
\end{equation}
where $\mu_i, \nu_j > 0$, $\sum_{i=0}^p \mu_i = \sum_{j=0}^q \nu_j = 1$,
where the matrix $A = (a_{j, i})$ satisfies the norm condition~\NormCond\
and the constant functions condition~\ConstCond, each $a_{j, i}$ being
a $n \times n$ matrix acting from $\R^n$ to~$\R^n$. We want to prove that 
\begin{equation} \label{resulte}
     \sum_{j=0}^q - \nu_j \log
      \Bigl( \int_{\R^n} e^{- g_j(x)} \, d x \Bigr)
 \le \sum_{i=0}^p - \mu_i \log
      \Bigl( \int_{\R^n} e^{- f_i(x)} \, d x  \Bigr).
\end{equation}
If $\int_{\R^n} e^{-f_{i_0}} = 0$ for one $i_0$, then by the
semi-integrability assumption~\SemiInt, the other $\int_{\R^n} e^{-f_i}$
are finite, the right-hand side of~\eqref{resulte} is $+\infty$ and this
case is obvious. We may therefore assume that all $f_i$ are L-proper. It
follows that all $g_j$, $j = 0, \ldots, q$, are L-proper. Indeed, each set
$A_i = \{ f_i < +\infty \}$ has positive Lebesgue measure, so we can find
and fix a Lebesgue density point $x_i^{(0)}$ of $A_i$, for 
$i = 0, \ldots, p$. Then the set $U$ of $u \in \R^n$ such that 
$x_i^{(0)} + u \in A_i$ for all $i = 0, \ldots, p$, \textit{i.e.},
$U = \bigcap_{i=0}^p (A_i - x_i^{(0)})$, has positive measure.
We know that $\sum_{i=0}^p a_{j, i} = I_n$ since $A$ preserves constant
functions by~\ConstCond. We see from~\eqref{hypote} that 
\begin{align*}
     \sum_{j= 0}^q \nu_j g_j \Bigl( u + \sum_{i=0}^p a_{j, i} x_i^{(0)} \Bigr) 
  &=  \sum_{j= 0}^q \nu_j 
        g_j \Bigl( \sum_{i=0}^p a_{j, i} (x_i^{(0)} + u) \Bigr) 
 \\
 &\le \sum_{i = 0}^p \mu_i f_i(x_i^{(0)} + u)
  <  + \infty
\end{align*}
for every $u \in U$, hence all $g_j$ are L-proper. 

 For $\varepsilon > 0$ and $i = 0, \ldots, p$, $j = 0, \ldots, q$, the
functions $f_{i, \varepsilon}(x) = f_i(x) + \varepsilon |x|^2$, 
$g_{j, \varepsilon}(x) = g_j(x) + \varepsilon |x|^2$, still 
satisfy~\eqref{hypote} by the assumption $\|A\| \le 1$. We may reduce the
problem to proving~\eqref{resulte} for $f_{i, \varepsilon}$, 
$g_{j, \varepsilon}$, since we can pass to the limit as $\varepsilon \to 0$
in $\int_{\R^n} e^{-g_j(x) - \varepsilon x^2} \, dx$,
$\int_{\R^n} e^{-f_i(x) - \varepsilon x^2} \, dx$, 
obtaining~\eqref{resulte} by monotone convergence. In other words, we may
keep $f_i, g_j$ but replace the Lebesgue measure in~\eqref{resulte} by a
Gaussian measure $d\gamma(x) = e^{- \varepsilon |x|^2} \, dx$. If 
$\int_{\R^n} e^{-g_{j_0}} \, d \gamma = +\infty$ for one $j_0$, there is
nothing to prove: the other integrals $\int_{\R^n} e^{-g_{j}} \, d \gamma$,
$j \ne j_0$, are $> 0$ and the left-hand side of~\eqref{resulte} is
$-\infty$. Otherwise, for $N \in \N$, define $g_{j, N} = \min(g_j, N)$ and 
$f_{i, N} = \max(f_i, -N)$, that trivially satisfy~\eqref{hypote}, and
observe that it is enough to give the proof for $f_{i, N}$, $g_{j, N}$;
indeed, since $e^{ - g_{j, 0}} = e^{- \min(g_j, 0)} \le e^{-g_j} + 1$ is
integrable with respect to $d\gamma$, we may again pass to the (decreasing)
limit in the integrals $\int_{\R^n} e^{-g_{j, N}} \, d\gamma$ as 
$N \to +\infty$, and use monotone convergence for 
$\int_{\R^n} e^{-f_{i, N}} \, d\gamma$.

 Now we reduced the question to the case $g_j \le N$ and $f_i \ge - N$.
Thanks to the discrete situation, we may further assume that $f_{i_0}$ is
bounded above by $2 N / \mu_{i_0}$ for each $i_0$, since
\[
     \sum_{j=0}^q \nu_j g_j \Bigl( \sum_{i=0}^p a_{j, i} x_i \Bigr) 
      - \sum_{i \ne i_0} \mu_i f_i(x_i)
 \le N + (1 - \mu_{i_0}) N
  <  2 N,
\]
and for the same reason, we may also assume $g_{j_0}$ bounded below by 
$-2N / \nu_{j_0}$, while still keeping~\eqref{hypote} true. 

 Finally, we restricted the problem to bounded Borel functions $f_i, g_j$
and a Gaussian measure $\gamma$. If all $f_i, g_j$ are translated by the
same vector, then~\eqref{hypote} remains true by the constant functions
condition~\ConstCond, therefore~\eqref{hypote} is stable by convolution
with non-negative kernels. We may approximate in $L^1(\gamma)$-norm the
functions $f_i, g_j$ by convolution with a compactly supported continuous
non-negative kernel, keeping~\eqref{hypote} and meeting the assumptions of
Proposition~\ref{GaussianPropo}. We obtain thus, for a sequence of
continuous approximations $f_{i, k}$ and $g_{j, k}$, $k \in \N$, the
inequality 
\[
     \sum_{j=0}^q - \nu_j \log
      \Bigl(\int_{\R^n} e^{- g_{j, k}} \, d\gamma \Bigr)
 \le \sum_{i=0}^p - \mu_i \log
      \Bigl(\int_{\R^n} e^{- f_{i, k}} \, d\gamma \Bigr).
\]
Some subsequences of the sequence of continuous approximations tend almost
everywhere to $f_i, g_j$ respectively, and we finish with the
latter restricted problem by using the dominated convergence theorem.

This ends the proof of Theorem~\ref{discretePL1}.

\subsection{Proof of Theorem~\ref{genPL1}}

Suppose that we try to obtain the conclusion~\eqref{conclPL}
of~Theorem~\ref{genPL1} for $\{f_s\}$ and $\{g_t\}$ as a limit case of
``good cases" for which the conclusion is known, say $\{f_{s, k} \}$ and
$\{g_{t, k} \}$, $k \in \N$, such that $f_{s, k} \to f_s$, 
$g_{t, k} \to g_t$ when $k \to +\infty$. The next elementary lemma will
help us to do it. Consider a measure space $(\Omega, \Sigma, \mu)$ and a
measure $\nu$ on $(\R^n, \mathcal{B}_{\R^n})$, where $\mu$ and $\nu$ are
$\sigma$-finite. Let $h_{s, k}(x)$, $k \in \N$ and $h_s(x)$, 
$s \in \Omega$, $x \in \R^n$ be 
$\Sigma \otimes \mathcal{B}_{\R^n}$-measurable from $\Omega \times \R^n$ to 
$\R \cup \{+\infty\}$. Define $H_k$ and $H$ by 
\begin{equation} \label{FunctionH}
 e^{-H_k(s)} = \int_{\R^n} e^{-h_{s, k}(x)} \, d\nu(x),
 \quad
 e^{-H(s)} = \int_{\R^n} e^{-h_{s}(x)} \, d\nu(x).
\end{equation}
These functions are $\Sigma$-measurable by the theory behind Fubini's
theorem. Note that
\[
 H^+(s) = \log^- \Bigl( \int_{\R^n} e^{-h_{s}(x)} \, d\nu(x) \Bigr),
 \quad
 H^-(s) = \log^+ \Bigl( \int_{\R^n} e^{-h_{s}(x)} \, d\nu(x) \Bigr).
\]

\begin{lemma} \label{LimitsLemma}
Let $H_k(s), H(s)$ be defined by~\eqref{FunctionH}. Assume that $H_k$, 
$k \in \N$ and $H$ are $\mu$-semi-integrable.
\begin{itemize}
\item[$(a)$] Suppose that $h_{s, k}(x) \nearrow h_s(x)$ pointwise on
$\Omega \times \R^n$, and that $H_0(s) > - \infty$ for $\mu$-almost every
$s \in \Omega$. Then:
\begin{itemize}
\item[$(a1)$] $H_k(s) \!\nearrow\! H(s)$ $\mu$-a.e. and
$    \int_\Omega H(s) \, d\mu(s)
 {\ge} \limsup_k \int_\Omega H_k(s) \, d\mu(s)$.
\item[$(a2)$] If in addition $\int_\Omega H^-_0(s) \, d\mu(s) < +\infty$,
then 
\[
 \int_\Omega H(s) \, d\mu(s) = \lim_k \int_\Omega H_k(s) \, d\mu(s).
\]
\end{itemize}
\item[$(b)$] Suppose that $h_{s, k}(x) \searrow h_s(x)$ pointwise on
$\Omega \times \R^n$. Then:
\begin{itemize}
\item[$(b1)$] $H_k(s) \searrow H(s)$ pointwise on $\Omega$, and
\[
 \int_\Omega H(s) \, d\mu(s) \le \liminf_k \int_\Omega H_k(s) \, d\mu(s).
\]
\item[$(b2)$] If in addition $\int_\Omega H^+_0(s) \, d\mu(s) < +\infty$,
then 
\[
 \int_\Omega H(s) \, d\mu(s) = \lim_k \int_\Omega H_k(s) \, d\mu(s).
\]
\end{itemize}
\end{itemize}
\end{lemma}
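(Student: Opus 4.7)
Proof plan: I would prove both parts by two successive applications of monotone convergence — first to the inner integrals defining each $H_k(s)$, to obtain pointwise monotone convergence $H_k(s) \to H(s)$, and then to the outer integrals over $\Omega$. All integrals are interpreted as taking values in $[-\infty, +\infty]$, which is why the semi-integrability hypothesis is made in the statement.

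For part (a): since $h_{s,k}(x) \nearrow h_s(x)$, the non-negative integrands $e^{-h_{s,k}(x)}$ decrease pointwise to $e^{-h_s(x)}$. For each $s$ with $H_0(s) > -\infty$, the function $e^{-h_{s,0}}$ is $\nu$-integrable and dominates the whole sequence, so dominated convergence gives $\int_{\R^n} e^{-h_{s,k}} \, d\nu \searrow \int_{\R^n} e^{-h_s} \, d\nu$, which rewrites as $H_k(s) \nearrow H(s)$. This holds $\mu$-a.e.\ by hypothesis. Part (a1) then follows from the pointwise inequality $H_k \le H$ together with the semi-integrability of both, which is enough to compare the two extended-real-valued integrals. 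For (a2), under the additional assumption $\int_\Omega H_0^- \, d\mu < +\infty$, I would apply the standard monotone convergence theorem to the non-negative increasing sequence $H_k + H_0^-$ (which is $\ge H_0 + H_0^- \ge 0$ since $H_k \ge H_0$), with limit $H + H_0^-$, and then subtract the finite quantity $\int_\Omega H_0^- \, d\mu$ from both sides to obtain $\int_\Omega H_k \, d\mu \to \int_\Omega H \, d\mu$.

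For part (b) the argument is symmetric, and in fact simpler on the inner layer. From $h_{s,k} \searrow h_s$ one gets $e^{-h_{s,k}} \nearrow e^{-h_s}$, so the ordinary monotone convergence theorem applies \emph{unconditionally} and yields $H_k(s) \searrow H(s)$ for every $s \in \Omega$. Part (b1) follows from $H_k \ge H$ and semi-integrability. For (b2), the hypothesis $\int_\Omega H_0^+ \, d\mu < +\infty$ combined with $H_k \le H_0$ means that $H_0^+ - H_k$ is a non-negative increasing sequence converging to $H_0^+ - H$; monotone convergence then yields $\int_\Omega H_k \, d\mu \to \int_\Omega H \, d\mu$ after subtracting the finite $\int_\Omega H_0^+ \, d\mu$.

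I do not expect any deep obstacle. The main care required is the extended-real-valued bookkeeping: one must check that every integral encountered is well-defined, that arithmetic with $\pm\infty$ remains consistent when subtracting $\int H_0^\pm \, d\mu$, and that the asymmetry between (a) and (b) — namely the need for the extra condition $H_0(s) > -\infty$ in (a) — correctly reflects the fact that passing to a decreasing limit of non-negative integrands requires a dominating integrable function, whereas the increasing case requires nothing.
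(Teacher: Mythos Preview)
Your proposal is correct and follows essentially the same route as the paper's proof: dominated convergence on the inner integral in case~(a) (using that $e^{-h_{s,0}}$ is $\nu$-integrable when $H_0(s)>-\infty$), monotone convergence on the inner integral in case~(b), and then standard monotone/dominated convergence arguments on the outer integral. The only cosmetic difference is that the paper splits $H_k=H_k^+-H_k^-$ and treats the two pieces separately (monotone convergence on $H_k^+$, Fatou or dominated convergence on $H_k^-$), whereas you shift by $H_0^\pm$ to reduce to a non-negative monotone sequence; these are equivalent bookkeeping devices.
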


\begin{proof} Apply successively the classical results of Integration
Theory: the Fatou lemma, the monotone convergence theorem and the dominated
convergence theorem. In case~$(a)$, we have 
$e^{-h_{s, k}(x)} \searrow e^{- h_s(x)}$, and $H_0(s) > - \infty$ allows us 
to apply dominated convergence to $\nu$ and deduce that 
$H_k(s) \nearrow H(s)$. Next, $H^+_k(s) \nearrow H^+(s)$ and
$H^-_k(s) \searrow H^-(s)$. In $\int_\Omega H
 = \int_\Omega H^+ - \int_\Omega H^-$ (that makes sense by the
semi-integrability assumption), apply monotone convergence for $H^+$ and
Fatou for $H^-$. If $\int_\Omega H^-_0 \, d\mu < +\infty$, replace Fatou by
dominated convergence for $H^-$. The proof of~$(b)$ is similar and left to
the reader.
\end{proof}

\begin{proof}[Proof of Theorem~\ref{genPL1}]
Since $\mu_1$ and $\mu_2$ have the same finite mass it is enough, as
mentioned after the statement of the theorem, to have an inequality
involving $(P_T e^{-g_t})(0)$ and
\[
   (P_T e^{-f_s})(0)
 = \int_{\R^n} e^{-f_s(x) - |x|^2 / (2 T)} 
    \, \frac {dx} { (2 \pi T)^{n/2} },
 \quad
 2 T > 1 / \varepsilon_0.
\] 
The result will follow by letting $T \to +\infty$. Indeed, 
$f_s(x) + \varepsilon |x|^2$ decreases to $f_s(x)$, 
$g_t(x) + \varepsilon |x|^2$ decreases to $g_t(x)$ when 
$\varepsilon \searrow 0$; for $\varepsilon \in [0, \varepsilon_0]$, define
$F_\varepsilon(s)$, $G_\varepsilon(t)$ as in~\eqref{FunctionH}, with
$\nu$ being the Lebesgue measure, that is to say, let
$e^{-F_\varepsilon(s)}
 = \int_{\R^n} e^{- f_s(x) - \varepsilon |x|^2} \, dx$ and use the similar
expression for $G_\varepsilon(t)$. By the semi-integrability
condition~\SemiInt, $G_0(t)$ is semi-integrable, and we may assume that
$G^-_0(t)$ is integrable, otherwise the left-hand side of~\eqref{conclPL}
is $- \infty$, an obvious case. We also know by the
assumption~\eqref{dominated} that 
$  F^+_{\varepsilon_0}(s) 
 = \log^- \bigl( \int_{\R^n} e^{- f_s(x) - \varepsilon_0 |x|^2} \, dx
   \bigr)$
is integrable. This yields that $F_\varepsilon(s)$ and $G_\varepsilon(t)$
are semi-integrable for $\varepsilon \in [0, \varepsilon_0]$. Use
Lemma~\ref{LimitsLemma}, $(b)$ to see that $F_\varepsilon(s) \to F(s)$,
$G_\varepsilon(t) \to G(t)$ for every $s, t$. For the $F$ side, we may use
$(b2)$ because $F_{\varepsilon_0}^+$ is integrable. We can therefore pass
to the limit as $\varepsilon \to 0$ and conclude that
\begin{align*}
   \int_{\Omega_1} F_\varepsilon(s) \, d\mu_1(s)
 &= \int_{\Omega_1}
    - \log \Bigl( \int_{\R^n} e^{ - f_s(x) - \varepsilon |x|^2} \, dx \Bigr)
     \, d\mu_1(s)
 \\
 &\to
   \int_{\Omega_1}
    - \log \Bigl( \int_{\R^n} e^{ - f_s } \Bigr)
     \, d\mu_1(s).
\end{align*}
For the analogous expressions with $g_t$, we use $(b1)$ of
Lemma~\ref{LimitsLemma}.

 Having reduced the problem to the Gaussian measure 
$\gamma = \gamma_{n, \tau}$, our goal is now to show how to relax the
continuity assumption from Proposition~\ref{GaussianPropo}. To this end, we
shall introduce classical continuous approximations obtained by
inf-convolution with large multiples of $x \mapsto |x|^2$. For working with
these approximations, we shall need the existence of a selection 
$\alpha_0 \in L^2(X_1, \R^n)$ such that
$\int_{\Omega_1} f_s(\alpha_0(s)) \, d\mu_1(s) < +\infty$, which is granted
by Lemma~\ref{L2selection} since we know by~\eqref{dominated} that
$s \mapsto \log^- \Bigl( \int_{\R^n} e^{-f_s(x)} \, d\gamma(x) \Bigr)$ is
$\mu_1$-integrable when $2 \tau > 1 / \varepsilon_0$.  

 We fix $k > 0$ and define $f_{s, k}$, $g_{t, k}$ by the inf-convolution of 
$f_s, g_t$ with $h_k(x) = k |x|^2$,
\begin{equation} \label{InfConvolu}
 f_{s, k}(x) = \inf_{u \in \R^n} \bigl( f_s(x + u) + k |u|^2 \bigr),
 \quad
 g_{t, k}(x) = \inf_{u \in \R^n} \bigl( g_t(x + u) + k |u|^2 \bigr).
\end{equation}
Clearly, $f_{s, k} \le f_s$, $g_{t, k} \le g_t$, and $f_{s, k}$, $g_{t, k}$
are continuous functions on $\R^n$. By Lemma~\ref{L2selection}, we may find
negligible Borel sets $N_1 \in \Sigma_1$, $N_2 \in \Sigma_2$ such that 
$(s, x) \mapsto f_{s, k}(x)$, $(t, x) \mapsto g_{t, k}(x)$ are Borel
functions on $(\Omega_1 \setminus N_1) \times \R^n$ and 
$(\Omega_2 \setminus N_2) \times \R^n$. We have that
\[
     0
 \le f_{s, k}(x) 
 \le f_s(\alpha_0(s)) + k |x - \alpha_0(s)|^2,
 \quad
 x \in \R^n,
\]
fitting the exponential bound~\eqref{ExponeBound} needed to apply 
Proposition~\ref{GaussianPropo}. Let $\alpha$ be in $L^2(X_1, \R^n)$. For
any fixed $\varepsilon > 0$, we may find a measurable selection $u(s)$
(Lemma~\ref{L2selection}) such that
\[
     f_s \bigl( \alpha(s) + u(s) \bigr) + k |u(s)|^2 - \varepsilon
  <  f_{s, k}( \alpha(s) )
 \le f_s(\alpha_0(s)) + k |\alpha(s) - \alpha_0(s)|^2.
\]
Since $f_s \ge 0$, this shows that $u \in L^2(X_1, \R^n)$. We can write
\[
     \int_{\Omega_2} g_{t, k}( (A \alpha)(t) ) \, d\mu_2(t)
 \le \int_{\Omega_2}
      \bigl[ g_t \bigl( (A \alpha)(t) + (A u)(t) \bigr)
                       + k |(A u)(t)|^2 \bigr]
       \, d\mu_2(t),
\]
which is bounded, using the basic assumption~\eqref{condPL} and the norm
condition $\|A\| \le 1$, by
\[
     \int_{\Omega_1}
      \bigl[ f_s \bigl( \alpha(s) + u(s) \bigr) + k |u(s)|^2 \bigr]
       \, d\mu_1(s)
 \le \int_{\Omega_1} f_{s, k}(\alpha(s)) \, d\mu_1(s)
      + \varepsilon \mu_1(\Omega_1).
\]
Hence, $f_{s, k}$ and $g_{t, k}$ also satisfy the basic assumption. By
Proposition~\ref{GaussianPropo}, we conclude that
\[
     \int_{\Omega_2} \!
      - \log \Bigl( \int_{\R^n} e^{- g_{t, k}(x)} \, d\gamma(x) \Bigr)
         \, d\mu_2(t)
 \le \int_{\Omega_1} \!
      - \log \Bigl( \int_{\R^n} e^{- f_{s, k}(x)} \, d\gamma(x) \Bigr)
       \, d\mu_1(s).
\]
When $k$ tends to infinity, $f_{s, k}$ increases to a \lsc\ function
$\widetilde f_s \le f_s$, and $g_{t, k}(x)$ increases to $g_t(x)$ because
$x \mapsto g_t(x)$ is \lsc\ on $\R^n$. We apply again
Lemma~\ref{LimitsLemma}, this time with $\nu = \gamma$, defining $F_k(s)$, 
$\widetilde F(s) \le F(s)$, $G_k(t)$, $G(t)$ from $f_{s, k}$, 
$\widetilde f_s, f_s$, $g_{t, k}, g_t$ as in~\eqref{FunctionH}. Since the
functions are $\ge 0$, $e^{-f_{s, k}(x)}$, $e^{-g_{t, k}(x)}$ are bounded
by $1$, we have $F_k(s), G_k(t) \ge 0$ because $\gamma$ is a probability
measure. Thus $F_0(s), G_0(t) > - \infty$ and $F_k(s) \to \widetilde F(s)$,
$G_k(t) \to G(t)$ by~$(a)$. The conclusion follows, by $(a1)$ for $F$, and
by $(a2)$ for $G$ since $G_0^- = 0$.
\end{proof}

\begin{rmk} \label{GaussianRmk}
Assume that $f_s$, $g_t$ are continuous $\ge 0$ on $\R^n$. Let $D$ be a
countable dense subset of $\R^n$. Since $f_s$ and $g_t$ are continuous on
$\R^n$, we may define the inf-convolution by
\[
 f_{s, k}(x) = \inf_{u \in D} \bigl( f_s(x + u) + k |u|^2 \bigr),
 \quad
 g_{t, k}(x) = \inf_{u \in D} \bigl( g_t(x + u) + k |u|^2 \bigr).
\]
It is now clear that $(s, x) \mapsto f_{s, k}(x)$, 
$(t, x) \mapsto g_{t, k}(x)$ are
$\Sigma_i \otimes \mathcal{B}_{\R^n}$-measurable, as countable infima of
measurable functions. Similarly, the possibility of selecting $u(s)$ is now
evident.
\end{rmk}

\section{Other formulations and consequences}

We start with a simple, natural situation where the assumptions of
Theorem~\ref{genPL1} are satisfied. Assume that $\Omega$ is a Polish space,
$\Sigma$ its Borel $\sigma$-algebra and $\mu$ a probability measure on
$(\Omega, \Sigma)$. Take $\Sigma_1 = \Sigma$, let $\Sigma_2 \subset \Sigma$
be a sub-$\sigma$-algebra of $\Sigma$, and let $\mu_1 = \mu_2 = \mu$. Then
the conditional expectation
\[
 A = \E \bigl[ \; \cdot \; | \; \Sigma_2 \bigr] :
       L^2( \Sigma, \R^n) \to L^2(\Sigma_2, \R^n)
       \subset L^2( \Sigma, \R^n)
\]
satisfies the norm condition~\NormCond\ and the ``constant functions
condition"~\ConstCond\ from \SettA. An already
interesting case is when we take $\Sigma_2$ to be trivial, 
$\Sigma_2 = \{\emptyset, \Omega\}$, in which case $A$ is simply the
$\mu$-mean,
\[
 A \alpha = \int_\Omega \alpha(s) \, d\mu(s),
\]
and the space $\Omega_2$ can be then considered as being a one-point space,
say $\Omega_2 = \{0\}$. In this case, the family $\{ g_t \}$ consists of a
single function $g$ and Theorem~\ref{genPL1} reads as:

\begin{cor}\label{genPL2}
Let $\Omega$ be a Polish space, $\Sigma$ its Borel $\sigma$-algebra, $\mu$ a
probability measure on $(\Omega, \Sigma)$. Suppose that we are in
\SettA\ with 
$(\Omega_1, \Sigma_1, \mu_1) = (\Omega, \Sigma, \mu)$ and 
$\Omega_2 = \{0\}$. Let $\{ f_s \}_{s \in \Omega}$ satisfy the assumptions
of Theorem~\ref{genPL1}, and let $g$ be a bounded below \lsc\ function on
$\R^n$. If we have for every $\alpha \in L^2(\Omega, \Sigma, \mu, \R^n)$
that
\[
     g \left( \int_\Omega \alpha(s) \, d\mu(s) \right) 
 \le \int_\Omega f_s \bigl( \alpha(s) \bigr) \, d\mu(s),
\]
then
\[
     - \log \left( \int_{\R^n} e^{-g} \right)
 \le \int_\Omega - \log \left( \int_{\R^n} e^{-f_s} \right) \, d\mu(s).
\]
\end{cor}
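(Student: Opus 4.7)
The plan is to deduce the corollary as a direct specialization of Theorem~\ref{genPL1}, so the real task is to check that the stated setup fits into \SettA. I would take $X_1 = (\Omega, \Sigma, \mu)$ as given, set $\Omega_2 = \{0\}$ with $\mu_2 = \delta_0$ the unit mass at $0$, so that $\mu_1(\Omega_1) = \mu_2(\Omega_2) = 1$. Under this identification $L^2(X_2, \R^n) \cong \R^n$, and the operator $A : L^2(X_1, \R^n) \to \R^n$ defined by $A\alpha = \int_\Omega \alpha \, d\mu$ satisfies the norm condition~\NormCond\ by Jensen's inequality applied to $|\cdot|^2$ under the probability measure $\mu$, and the constant-functions condition~\ConstCond\ for the same reason. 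The joint measurability~\MeasuCond\ for $\{f_s\}$ is part of the hypotheses; for the single function $g$ the parameter space is trivial so measurability is automatic. Semi-integrability~\SemiInt\ holds trivially on the $g$-side (the relevant function is a constant) and is inherited on the $f$-side.

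Next I would address the mismatch that $g$ is only assumed bounded below rather than non-negative: if $g \ge -c$ with $c \ge 0$, then replacing $g$ by $g + c$ and each $f_s$ by $f_s + c$ leaves both the basic assumption~\eqref{condPL} and the conclusion~\eqref{conclPL} unchanged, since $\mu_1(\Omega_1) = \mu_2(\Omega_2) = 1$ makes the constant $c$ contribute identically to both sides. The lower semicontinuity of $g$ is preserved, and the dominated condition~\eqref{dominated} on $f_s$ is clearly stable under adding a bounded constant. So after this harmless normalization all hypotheses of Theorem~\ref{genPL1} are in force.

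Finally, the basic assumption~\eqref{condPL} in this specialized setting reads
\[
 g\bigl((A\alpha)(0)\bigr) \, \mu_2(\{0\})
 \le \int_\Omega f_s(\alpha(s)) \, d\mu(s),
\]
which coincides with the hypothesis of the corollary via $(A\alpha)(0) = \int \alpha \, d\mu$, and the conclusion~\eqref{conclPL} collapses to exactly the stated inequality. There is no genuine obstacle: the proof is essentially a translation exercise, and the only minor points that could trip one up are the constant-shift reduction on $g$ and recognizing that $\Omega_2$ being a single point reduces $L^2(X_2, \R^n)$ to $\R^n$ and collapses the integrand on the $g$-side to a single value.
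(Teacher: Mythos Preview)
Your proposal is correct and matches the paper's approach exactly: the paper presents this corollary as the special case of Theorem~\ref{genPL1} obtained by taking $\Omega_2$ to be a one-point space and $A\alpha = \int_\Omega \alpha\, d\mu$ (equivalently, conditional expectation with respect to the trivial sub-$\sigma$-algebra), with the bounded-below-to-nonnegative reduction on $g$ handled exactly as in Remark~\ref{rem:restrictions}. There is nothing to add.
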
 

 The Pr\'ekopa--Leindler inequality follows by taking $\Omega$ to be a two
point probability space, for instance $\Omega = \{0, 1\}$, and taking $\mu$
of the form $\mu = (1-t) \delta_0 + t \delta_1$ for some $t \in [0, 1]$. 
If we replace the two point space ({\it i.e.}, Bernoulli variables) by the
unit circle $S^1 = \{ e^{i \theta} \; ; \ \theta \in \R \}$
({\it i.e.}, Steinhaus variables), then we obtain:

\begin{cor}
Under the assumptions of Corollary~\ref{genPL2} with $\Omega = S^1$ and
$\mu = d \theta / (2 \pi)$, let $\{ f_\xi \}_{\xi \in S^1}$ be non-negative
(or properly bounded from below as in Remark~\ref{rem:restrictions}) Borel
functions on $\R^n$ and let $g$ be a bounded from below \lsc\ function on
$\R^n$. If for every $\alpha \in L^2(S^1, \R^n)$, we have 
\[
     g \left(
        \int_0^{2\pi} \alpha(e^{i\theta}) \, \frac{d\theta}{2\pi} 
       \right)
 \le \int_0^{2\pi} f_{e^{i\theta}} \bigl( \alpha(e^{i\theta}) \bigr) \,
      \frac {d\theta} {2\pi},
\]
then it follows that 
\[
     - \log \left( \int_{\R^n} e^{-g} \right)
 \le \int_0^{2\pi} 
      - \log \left( \int_{\R^n} e^{-f_{e^{i\theta}}} \right) \,
       \frac{d\theta}{2\pi}.
\]
\end{cor}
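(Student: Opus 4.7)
The plan is to recognize this corollary as a direct specialization of Corollary~\ref{genPL2} to the Polish space $\Omega = S^1$, equipped with its standard Borel $\sigma$-algebra and the normalized Haar measure $\mu = d\theta/(2\pi)$. So the heart of the argument is bookkeeping: verify that the setup $(S^1, \mathcal{B}_{S^1}, d\theta/(2\pi))$ fits \SettA, then invoke the general statement.

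First I would note that $S^1$, viewed as a compact subset of $\R^2$, is a Polish topological space, $\mathcal{B}_{S^1}$ is its Borel $\sigma$-algebra, and $d\theta/(2\pi)$ is a Borel probability measure on it. The target space $\Omega_2 = \{0\}$ carries the unit Dirac mass, so $\mu_1(\Omega_1) = \mu_2(\Omega_2) = 1$. The ``operator" $A : L^2(S^1, \R^n) \to \R^n$ appearing implicitly in the statement is the $\mu$-mean $A\alpha = \int_0^{2\pi} \alpha(e^{i\theta})\, d\theta/(2\pi)$; this is contractive for the $L^2$-norm by Jensen applied to the convex function $|\cdot|^2$ (giving the norm condition~\NormCond) and trivially fixes constants (the constant functions condition~\ConstCond). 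The hypotheses inherited from Corollary~\ref{genPL2}, namely ``$\{f_\xi\}$ satisfies the assumptions of Theorem~\ref{genPL1}" and ``$g$ is bounded below and \lsc", provide \MeasuCond, \SemiInt\ and the domination~\eqref{dominated} for the $\{f_\xi\}$ side, and the single-function replacement for $\{g_t\}$ needed to reduce to one function $g$.

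Once these identifications are made, the hypothesis displayed in the corollary is literally the basic assumption~\eqref{condPL} in this setting, and its conclusion is precisely~\eqref{conclPL}, now with $\Omega_2$ a single point. Applying Corollary~\ref{genPL2} verbatim finishes the proof. There is no genuine obstacle here; the only thing worth underlining is that passing from Bernoulli averages (two-point $\Omega$, which recovered Pr\'ekopa--Leindler) to Steinhaus averages (one-dimensional compact abelian group $S^1$) is allowed by Corollary~\ref{genPL2} with no extra work, since Polishness and finite mass are all that the general machinery requires of $\Omega_1$.
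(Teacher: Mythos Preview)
Your proposal is correct and matches the paper's approach: the paper presents this corollary as an immediate specialization of Corollary~\ref{genPL2} to $\Omega = S^1$ with normalized Haar measure, without giving a separate proof. Your verification that $S^1$ is Polish, that the mean operator satisfies conditions~\NormCond\ and~\ConstCond, and that the remaining hypotheses are inherited from the statement, is exactly the bookkeeping the paper leaves implicit.
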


 A consequence of the previous Corollary is one of the Berndtsson's
plurisubharmonic extensions of the Pr\'ekopa theorem, the relatively easy
``tube" case. 

\begin{cor}[Berndtsson~\cite{Berndtsson:1998p255}]
Let $\varphi : \C \times \C^n \to \R$ be plurisubharmonic on $\C^{n+1}$ and
such that, for every $z \in \C$ and $w \in \C^n$, we have
$\varphi(z, w ) = \varphi(z, \Re w)$. Then, the function
\[
 \psi(z) = -\log \left( \int_{\R^n} e^{-\varphi(z, x)} \, dx \right)
\]
is subharmonic on $\C$.
\end{cor}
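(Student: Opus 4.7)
The plan is to verify the sub-mean-value inequality
\[
\psi(z_0) \le \frac{1}{2\pi}\int_0^{2\pi} \psi(z_0 + re^{i\theta})\,d\theta
\]
for every closed disk $\bar D(z_0,r) \subset \C$; combined with the upper semicontinuity of $\psi$ (which follows from the usc of $\varphi$ and Fatou's lemma applied to $e^{-\varphi(z,\cdot)}$), this yields subharmonicity of $\psi$. I would deduce this inequality from the previous corollary with the choices $g(x) = \varphi(z_0,x)$ and $f_{e^{i\theta}}(x) = \varphi(z_0 + re^{i\theta}, x)$: the conclusion of that corollary is then exactly the desired inequality. It remains to verify its hypothesis, namely that for every $\alpha = (\alpha_1, \ldots, \alpha_n) \in L^2(S^1, \R^n)$,
\[
\varphi\Bigl(z_0,\int_0^{2\pi} \alpha(e^{i\theta})\,\tfrac{d\theta}{2\pi}\Bigr) \le \int_0^{2\pi} \varphi\bigl(z_0 + re^{i\theta}, \alpha(e^{i\theta})\bigr)\,\tfrac{d\theta}{2\pi}.
\]

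To establish this, I would construct a holomorphic disk in $\C^{n+1}$. Set $F_0(\zeta) = z_0 + r\zeta$ on the first coordinate, and for $j = 1, \ldots, n$ let $F_j$ be the holomorphic function on $\{|\zeta| < 1\}$ whose boundary real part equals $\alpha_j$, furnished by the Schwarz--Herglotz integral
\[
F_j(\zeta) = \int_0^{2\pi} \frac{e^{i\theta} + \zeta}{e^{i\theta} - \zeta}\,\alpha_j(e^{i\theta})\,\tfrac{d\theta}{2\pi}.
\]
Then $F = (F_0, F_1, \ldots, F_n)$ is holomorphic, so by plurisubharmonicity of $\varphi$, the composition $\varphi \circ F$ is subharmonic on the unit disk, whence the classical sub-mean-value property $\varphi(F(0)) \le \frac{1}{2\pi}\int_0^{2\pi}\varphi(F(\rho e^{i\theta}))\,d\theta$ for $\rho < 1$, passed to the limit $\rho \to 1$. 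Combining this with the mean-value identity $\Re F_j(0) = \int_0^{2\pi}\alpha_j\,\tfrac{d\theta}{2\pi}$ for the harmonic function $\Re F_j$, and the invariance $\varphi(z,w) = \varphi(z,\Re w)$, the two sides of this sub-mean inequality become precisely the two sides of the required hypothesis.

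The main obstacle is matching the regularity hypotheses of the previous corollary, which demands $g$ lower semicontinuous and the $f_{e^{i\theta}}$ non-negative Borel, whereas plurisubharmonic functions are only upper semicontinuous and merely locally bounded below. I would handle this by the standard approximation $\varphi^k = \varphi \ast \rho_{1/k}$, with $\rho_{1/k}$ a non-negative, compactly supported, radially symmetric smooth mollifier on $\C^{n+1}$. Each $\varphi^k$ is smooth and plurisubharmonic on a slightly shrunken domain, preserves the invariance $\varphi^k(z,w) = \varphi^k(z, \Re w)$ (which passes straightforwardly through the convolution once one integrates out the $\Im w'$ variable), and decreases pointwise to $\varphi$. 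The corollary then applies to each smooth $\varphi^k$ (after an additive constant to enforce nonnegativity), yielding subharmonicity of the corresponding $\psi^k$; monotone convergence $e^{-\varphi^k} \nearrow e^{-\varphi}$ gives $\psi^k \searrow \psi$, and a decreasing limit of subharmonic functions is subharmonic (provided it is not identically $-\infty$), completing the proof.
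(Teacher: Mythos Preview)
Your proposal is correct and follows essentially the same route as the paper: reduce to the sub-mean-value inequality for $\psi$, apply the preceding corollary with $g=\varphi(z_0,\cdot)$ and $f_{e^{i\theta}}=\varphi(z_0+re^{i\theta},\cdot)$, and verify its hypothesis by pulling back $\varphi$ along a holomorphic disk whose last $n$ coordinates have harmonic real parts with boundary values $\alpha$. Your Schwarz--Herglotz construction is exactly the paper's ``write the harmonic extension $\widetilde\alpha$ as $\Re H$ for a holomorphic $H$ with $H(0)=\widetilde\alpha(0)$''; you are simply more explicit. The paper disposes of the lower-bound issue in one line (``We can assume that $\varphi$ is bounded below for all $z$ close to $0$''), whereas you spell out a mollification argument; this is a reasonable elaboration rather than a different approach.
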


\begin{proof}
We will check that $\psi$ is subharmonic at $z = 0$, say. We can assume that
$\varphi$ is bounded below for all $z$ close to $0$. We want to prove that 
$\psi(0) \le \int_0^{2\pi} \psi(r e^{i\theta}) \, \frac {d\theta} {2\pi}$
for any fixed $r > 0$ small enough. We take $r = 1$ to simplify notations.
Take $g(x) = \varphi(0, x)$ and 
$f_{e^{i \theta}}(x) := \varphi(e^{i \theta}, x )$ for 
$x \in \R^n \subset \C^n$. To conclude, it suffices to check that these
functions satisfy the hypothesis in the previous corollary. Let 
$\alpha \in L^2(S^1, \R^n)$ and let $\widetilde \alpha$ be the harmonic
extension of $\alpha$ to the unit disc $\mathbb D$. In particular
$\widetilde \alpha(0)
 = \int_0^{2\pi} \alpha(e^{i\theta}) \, \frac {d\theta} {2\pi}$. We can
write this $\widetilde \alpha$ as the real part $\Re H$ of an holomorphic
function $H : \mathbb D \to \C^n$ such that $H(0) = \widetilde \alpha(0)$.
We conclude by noticing that $z \mapsto \varphi(z, H(z))$ is subharmonic on
$\mathbb D$, and using 
$\varphi(z, H(z)) = \varphi(z, \widetilde \alpha(z))$ we obtain that 
\begin{align*}
     g \left( \int_0^{2 \pi} \alpha(e^{i \theta}) 
               \, \frac {d\theta} {2 \pi} 
       \right)
  &=  \varphi(0, H(0))
 \\
 &\le \int_0^{2 \pi} 
      \varphi(e^{i \theta}, H(e^{i \theta})) \, \frac{d\theta}{2 \pi}
  =  \int_0^{2 \pi} 
      f_{e^{i \theta}}(\alpha(e^{i \theta})) \, \frac{d\theta}{2 \pi}.
\end{align*}
\end{proof}

 Actually, it is easily seen and certainly known that the previous
Corollary~\ref{genPL2} can be deduced from the Pr\'ekopa--Leindler
inequality, by discretizing $\mu$ and by a simple induction procedure.
Therefore, Corollary~\ref{genPL2} and its consequences only serves as
motivation for the abstract setting above, but do not bring new
information. 

 The situation is different with other instances of Theorem~\ref{genPL1}
such as Proposition~\ref{propM} and the following general result. It seems
difficult to guess the existence of such inequalities without having
Borell's proof in mind.

\begin{theo} \label{GtAndFi}
Let $\mu$ be a Borel probability measure on $[0, 1]$ and set 
$m := \int t \, d\mu(t) \in [0, 1]$. Under \SettA\ with
$\Omega_1 = \{0, 1\}$ and $\Omega_2 = [0, 1]$, $\mu_2 = \mu$, let $f_0$,
$f_1$ and $\{ g_t \}_{t \in [0, 1]}$ be bounded from below (as in
Remark~\ref{rem:restrictions}) Borel functions from $\R^n$ to 
$\R \cup \{+\infty\}$. Assume $g_t$ \lsc\ for every $t$ in $\Omega_2$. If
for every $x_0, x_1 \in \R^n$ we have
\[
     \int_0^1 g_t \bigl( (1-t) x_0 + t x_1 \bigr) \, d\mu(t) 
 \le (1 - m) f_0(x_0) + m f_1(x_1),
\]
then it follows that
\[
     -  \int_0^1 \log \Bigl( \int_{\R^n} e^{-g_t} \Bigr) \, d\mu(t) 
 \le - (1 - m) \log \Bigl( \int_{\R^n} e^{-f_0} \Bigr)
     - m \log \Bigl( \int_{\R^n} e^{-f_1} \Bigr).
\]
\end{theo}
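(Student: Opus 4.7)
The plan is to realize Theorem~\ref{GtAndFi} as a direct application of Theorem~\ref{genPL1}, by identifying the appropriate measure $\mu_1$ and linear operator $A$. I set $\mu_1 := (1 - m)\delta_0 + m \delta_1$ on $\Omega_1 = \{0, 1\}$, which has mass $1 = \mu_2(\Omega_2)$, and I define
\[
 A : L^2(X_1, \R^n) \to L^2(X_2, \R^n),
 \quad A(x_0, x_1)(t) := (1 - t) x_0 + t x_1.
\]
The constant functions condition~\ConstCond\ is immediate from $A(v_0, v_0) \equiv v_0$, and the norm condition \NormCond\ follows from the convexity of $|\cdot|^2$: pointwise in $t$, $|(1-t) x_0 + t x_1|^2 \le (1-t)|x_0|^2 + t|x_1|^2$, so integrating against $\mu$ gives $\|A(x_0, x_1)\|_{L^2(\mu_2)}^2 \le (1-m)|x_0|^2 + m|x_1|^2 = \|(x_0, x_1)\|_{L^2(\mu_1)}^2$. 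This convexity step is the only substantive ingredient beyond the general machinery.

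With this $A$ and $\mu_1$, the two key displays in Theorem~\ref{genPL1} translate into the statement I want to prove: for $\alpha = (x_0, x_1) \in (\R^n)^2$, the basic assumption~\eqref{condPL} becomes exactly
\[
     \int_0^1 g_t \bigl( (1-t) x_0 + t x_1 \bigr) \, d\mu(t)
 \le (1 - m) f_0(x_0) + m f_1(x_1),
\]
and the conclusion~\eqref{conclPL} is the asserted inequality. So it suffices to verify that the remaining hypotheses of Theorem~\ref{genPL1} hold. Joint measurability~\MeasuCond\ and semi-integrability~\SemiInt\ are parts of Setting~1. The non-negativity of $f_s$ and the non-negativity plus \lsc\ of $g_t$ are obtained from the hypothesis by invoking Remark~\ref{rem:restrictions}, which is applicable because $\mu_1(\Omega_1) = \mu_2(\Omega_2) = 1$ allows one to match the total masses of the shifting functions. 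Finally, the domination condition~\eqref{dominated}, since $\Omega_1$ is finite, just says that $\int_{\R^n} e^{-f_i(x) - \varepsilon_0 |x|^2} \, dx > 0$ for $i = 0, 1$, which holds automatically whenever each $f_i$ is L-proper.

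The remaining edge cases are trivial and can be disposed of before invoking Theorem~\ref{genPL1}: if either $f_0$ or $f_1$ fails to be L-proper, then the right-hand side of the conclusion is $+\infty$; and if $m \in \{0, 1\}$, then $\mu = \delta_0$ or $\mu = \delta_1$ (by non-negativity of $t$), so the hypothesis reduces to a single pointwise inequality of the form $g_i \le f_i$, and the conclusion is the monotonicity of the integral. I do not anticipate any serious obstacle: the entire content of the theorem, beyond being an instance of the abstract Theorem~\ref{genPL1}, is the convexity estimate used for $\|A\| \le 1$, which is the same mechanism that powers the classical Pr\'ekopa--Leindler inequality in the case $|\Omega_1| = 2$, $|\Omega_2| = 1$ already discussed in the introduction.
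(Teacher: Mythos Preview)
Your proposal is correct and follows essentially the same route as the paper: define $\mu_1 = (1-m)\delta_0 + m\delta_1$ and $A(x_0,x_1)(t) = (1-t)x_0 + t x_1$, verify \ConstCond\ trivially and \NormCond\ by convexity of $|\cdot|^2$, then invoke Theorem~\ref{genPL1}. Your additional remarks on the domination condition~\eqref{dominated} (trivial since $\Omega_1$ is finite and each $f_i$ is L-proper) and the degenerate cases $m\in\{0,1\}$ are correct and slightly more detailed than the paper's own deduction.
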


 The deduction of this result from Theorem~\ref{genPL1} is as follows. Take
\[
   X_1
 = (\Omega_1, \mu_1)
 = \bigl( \{0, 1\}, (1-m) \delta_0 + m \delta_1 \bigr), \quad 
   X_2
 = (\Omega_2, \mu_2)
 = \bigl( [0, 1], \mu \bigr)
\]
and for $(v_0, v_1) \in \R^n \times \R^n \simeq L^2(X_1, \R^n)$ define 
$A(v_0, v_1) \in L^2(X_2, \R^n)$ by
\[
 A(v_0, v_1) : t \mapsto (1 - t) v_0 + t v_1.
\] 
Then the constant functions condition~\ConstCond\ is satisfied, as 
$A(v, v) \equiv v$, and by the convexity of the square of the Euclidean
norm on $\R^n$, letting $H_i = L^2(X_i, \R^n)$, $i = 1, 2$, we have 
\begin{align*}
     \|A(v_0, v_1) \|^2_{H_2}
  &=  \int_0^1 |(1 - t) v_0 + t v_1|^2 \, d\mu(t) 
 \\
 &\le (1 - m) |v_0|^2 + m |v_1|^2 
  =  \|(v_0, v_1)\|^2_{H_1}.
\end{align*}

\begin{rmk} \label{RmkGtAndFi}
It is possible to prove Theorem~\ref{GtAndFi} without assuming $g_t$ \lsc\
and $f_i$ bounded from below, see Claim~\ref{OneMore} for a sketch of proof.
\end{rmk}

 As a particular case of the previous proposition, if we take only one
function $g_t \equiv g : \R^n \to \R$, we have that when $\mu$ is a
probability measure on $[0, 1]$ with barycenter $m$, and
\begin{equation}
\label{condPLmu}
 \forall x_0, x_1 \in \R^n, \ 
     \int_0^1 g \bigl( (1-t) x_0 + t x_1 \bigr) \, d\mu(t) 
 \le (1-m) f_0(x_0) + m f_1(x_1),
\end{equation}
then
\[
     \int_{\R^n} e^{-g}
 \ge \Bigl( \int_{\R^n} e^{-f_0} \Bigr)^{1-m} 
     \Bigl( \int_{\R^n} e^{-f_1} \Bigr)^{m}.
\]
The conclusion is independent of $\mu$, so given the functions $f_0$, 
$f_1, g$, one can try to find an optimal $\mu$ for which the
condition~\eqref{condPLmu} holds. The classical Pr\'ekopa--Leindler
inequality cor\-res\-p\-onds to the choice $\mu = \delta_m$, and this is
indeed the optimal choice when $g$ is convex, as seen by using Jensen's
inequality in~\eqref{condPLmu}. Does the above result really improve on the
Pr\'ekopa--Leindler inequality when $g$ is non-convex?

 Another particular case of the previous proposition (in the case of convex
combination of two Dirac measures) is the following extension of
Proposition~\ref{propM}.

\begin{prop}\label{propM-gen}
Fix $s, t, r \in [0, 1]$ and set $m := (1-r) s + r t \in [0, 1]$. Let 
$f_0, f_1$, $g_0, g_1$ be four Borel functions from $\R^n$ to
$\R \cup \{+\infty\}$ such that for every $x_0, x_1 \in \R^n$,
\begin{equation*}
     (1-r) g_0 \bigl( (1-s) x_0  + sx_1 \bigr)
      + r g_1 \bigl( (1-t) x_0  + t x_1 \bigr) 
 \le (1-m) f_0(x_0) + m f_1(x_1).
\end{equation*}
Then
\begin{align*}
     \left( \int_{\R^n} e^{-g_0(x)} \, dx \right)^{1-r}  
      & \left(\int_{\R^n} e^{-g_1(x)} \, dx \right)^r
 \\
 & \ge \left(\int_{\R^n} e^{-f_0(x)} \, dx \right)^{1-m} 
     \left(\int_{\R^n} e^{-f_1(x)} \, dx \right)^m.
\end{align*}
\end{prop}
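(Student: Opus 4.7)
The plan is to recognize Proposition~\ref{propM-gen} as a direct instance of Theorem~\ref{GtAndFi} --- or, more cleanly, of the discrete Theorem~\ref{discretePL1} --- obtained by taking on $[0,1]$ the two-atom probability measure $\mu := (1-r)\delta_s + r\delta_t$. Its barycenter equals $(1-r)s + rt = m$, precisely the quantity appearing in the statement.

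For the deduction via Theorem~\ref{GtAndFi}, I define a family $\{g_\tau\}_{\tau \in [0,1]}$ by setting $g_s := g_0$ at the atom $\tau = s$ and $g_t := g_1$ at the atom $\tau = t$; the values elsewhere are immaterial. With this identification the integral hypothesis of the theorem reads, for every $x_0, x_1 \in \R^n$,
\[
     (1-r)\, g_0((1-s)x_0+s x_1) + r\, g_1((1-t)x_0+t x_1)
 \le (1-m) f_0(x_0) + m f_1(x_1),
\]
which is precisely the hypothesis of the proposition, and the conclusion of Theorem~\ref{GtAndFi} simplifies to
\[
     (1-r)\log\Bigl(\int_{\R^n} e^{-g_0}\Bigr) + r\log\Bigl(\int_{\R^n} e^{-g_1}\Bigr)
 \ge (1-m)\log\Bigl(\int_{\R^n} e^{-f_0}\Bigr) + m\log\Bigl(\int_{\R^n} e^{-f_1}\Bigr),
\]
which exponentiates to the desired inequality.

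Since Proposition~\ref{propM-gen} carries no regularity assumption on the functions, the cleanest route is actually to bypass the lower semicontinuity required in Theorem~\ref{GtAndFi} and invoke Theorem~\ref{discretePL1} directly, with $\mu_1 := (1-m)\delta_0 + m\delta_1$, $\mu_2 := (1-r)\delta_s + r\delta_t$, and the linear map $A : (\R^n)^2 \to (\R^n)^2$ given by $A(x_0,x_1) = ((1-s)x_0+sx_1,\,(1-t)x_0+tx_1)$. The constant-functions condition~\ConstCond\ is immediate: $A(x,x)=(x,x)$. The norm condition~\NormCond\ reduces, after applying convexity of $|\cdot|^2$ inside each component, to the identity
\[
     (1-r)\bigl((1-s)|x_0|^2+s|x_1|^2\bigr) + r\bigl((1-t)|x_0|^2+t|x_1|^2\bigr)
   = (1-m)|x_0|^2 + m|x_1|^2,
\]
which holds by the very definition of $m$. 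No step presents a genuine obstacle; the entire argument is bookkeeping that matches the special case against the master theorem.
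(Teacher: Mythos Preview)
Your proof is correct and follows essentially the same route as the paper: both observe that the result is formally a special case of Theorem~\ref{GtAndFi} with $\mu=(1-r)\delta_s+r\delta_t$, but then invoke Theorem~\ref{discretePL1} instead to avoid the \lsc\ hypothesis, with the same map $A(x_0,x_1)=((1-s)x_0+sx_1,(1-t)x_0+tx_1)$. The only cosmetic differences are that the paper labels $\Omega_2=\{0,1\}$ rather than $\{s,t\}$ (which avoids an ambiguity when $s=t$), and verifies $\|A\|\le 1$ via the exact identity~\eqref{moremoresquares} rather than by the convexity of $|\cdot|^2$ as you do; both arguments are valid.
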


\begin{proof}
Although the result is a particular case of Theorem~\ref{GtAndFi} (with
$\mu = (1-r) \delta_{s} + r \delta_{t}$), it is better to go back to
Theorem~\ref{discretePL1} since in the case of finitely many functions
there are no technical conditions. We take 
$\Omega_1 = \{\{0, 1\}, (1-m)\delta_0 + m\delta_1\}$ and 
$\Omega_2 = \{\{0, 1\}, (1-r)\delta_0 + r\delta_1\}$, and the linear
mapping $A : H_1 \to H_2$ with $H_i := L^2(\Omega_i, \R^n)$ defined by 
\[
    A(x_0, x_1) 
 := ((1 - s) x_0 + s x_1, (1-t) x_0 + t x_1), \quad x_0, x_1 \in \R^n.
\] 
We have $A(v, v) = (v, v)$ for every $v \in \R^n$ and we note that for 
$x_0, x_1 \in \R^n$,
\begin{align}
   (1 & - r) |(1-s) x_0 + s x_1|^2 + r |(1-t) x_0 + t x_1|^2
 \label{moremoresquares}
 \\
    & + ((1-r) s (1-s) + r t (1-t)) |x_0 - x_1|^2 
 = (1-m) |x_0|^2 + m |x_1|^2
 \notag
\end{align}
and so in particular 
\[ 
     (1-r) |(1-s) x_0 + s x_1|^2 + r |(1-t) x_0 + t x_1|^2  
 \le (1-m) |x_0|^2 + m |x_1|^2
\]
which exactly means that $\|A\| \le 1$.
\end{proof}

 Let us mention that Proposition~\ref{propM} (and also the above results)
do not give anything more than the Brunn--Minkowski inequality when applied
to the case of indicator functions, {\it i.e.}, to 
$e^{-f_0} = \mathbf 1_{A_0}$, $e^{-f_1} = \mathbf 1_{A_1}$, 
$e^{-g_0} = \mathbf 1_{2 A_0 / 3 + A_1 / 3}$, 
$e^{-g_1} = \mathbf 1_{A_0 / 3 + 2 A_1 / 3}$. Indeed, by Brunn--Minkowski,
one has that
\[
 |2 A_0 / 3 + A_1 / 3| \ge |A_0|^{2/3} |A_1|^{1/3},
 \quad
 |A_0 / 3 + 2 A_1 / 3| \ge |A_0|^{1/3} |A_1|^{2/3},
\]
and in this case, the result of Proposition~\ref{propM} is obtained by
taking the product of these two inequalities. It seems that the extra
information comes from applying the results to ``true" functions. This is
consistent with the fact that we do not know how to reach our functional
inequalities using other classical proofs of the Pr\'ekopa--Leindler
inequalities. For instance, it is not clear that Proposition~\ref{propM}
can be proved using the mass transportation argument of
McCann~\cite{mccann}; precisely, this transportation argument uses some
form of ``localization inside the integral" amounting to reducing the
problem to sets (ellipsoids, actually) and eventually matrices.

\begin{expl}[Gaussian self-improvement and generalized $\tau$-property]
Let $\alpha \in (0, 1)$. We shall comment on Proposition~\ref{propM-gen} in
the case $s = \alpha$, $t = 1-\alpha = 1-s$ and $r = 1/2$ (and so 
$m = 1/2$).  The goal is to get improved Gaussian inequalities by exploiting
identity~\eqref{moremoresquares} instead of an inequality.  Let $f_0, f_1$
be real Borel functions on $\R^n$.

 We start first with the case of the Pr\'ekopa-Leindler inequality and
consider the following variant $g$ of inf-convolution, defined for every 
$x \in \R^n$ by
\begin{align*}
   g(x) 
 = \inf \{ (1 - \alpha) f_0(x_0) + \alpha f_1(x_1) 
            + \alpha (1 - & \alpha) |x_0 - x_1|^2 / 2 :
 \\
         & x = (1 - \alpha) x_0 + \alpha x_1 \}.
\end{align*}
We assume that $g(x) > -\infty$. Using the following particular case
of~\eqref{moremoresquares},
\begin{equation} \label{addingsquares}
   |(1 - \alpha) x_0 + \alpha x_1|^2
    + \alpha (1 - \alpha) |x_0 - x_1|^2
 = (1 - \alpha) |x_0|^2 + \alpha |x_1|^2,
\end{equation}
it follows that
$     g(x) + |x|^2 / 2
  \le (1 - \alpha) \bigl( f_0(x_0) + |x_0|^2 / 2 \bigr)
       + \alpha \bigl( f_1(x_1) + |x_1|^2 / 2 \bigr)$ 
whenever $x = (1 - \alpha) x_0 + \alpha x_1$, and we obtain by
Pr\'ekopa--Leindler that
\[
     \int_{\R^n} e^{- g} \, d\gamma_n
 \ge \Bigl( \int_{\R^n} e^{- f_0} \, d\gamma_n \Bigr)^{1 - \alpha}
     \Bigl( \int_{\R^n} e^{- f_1} \, d\gamma_n \Bigr)^{\alpha},
\]
where $\gamma_n$ is the standard Gaussian measure $\gamma_{n, 1}$ on
$\R^n$. This infimal convolution inequality ensures the $\tau$-property
from~\cite{maurey91} for the Gaussian measure.

 For $\alpha, \beta, \lambda \in (0, 1)$, we may generalize $g$ as
\begin{align*}
   g_{\alpha, \beta, \lambda} (x) 
 = \inf \{ (1 - \beta) f_0(x_0) + \beta f_1(x_1) 
            + \lambda \alpha (1 - & \alpha) |x_0 - x_1|^2 :
 \\
         & x = (1 - \alpha) x_0 + \alpha x_1 \},
\end{align*}
the former $g$ being equal to $g_{\alpha, \alpha, 1/2}$. The
Pr\'ekopa--Leindler inequality does not seem to apply to values of the
parameters other than triples of the form $(\alpha, \alpha, 1/2)$. 
Combining
\[
     g_{\alpha, \beta, \lambda} (x_\alpha)
 \le (1 - \beta) f_0(x_0) + \beta f_1(x_1)
      + \lambda \alpha (1 - \alpha) |x_0 - x_1|^2
\]
and the corresponding inequality for
$g_{1 - \alpha, 1 - \beta, 1 - \lambda} (x_{1 - \alpha})$,
we get, with $g_0 = g_{\alpha, \beta, \lambda}$ and 
$g_1 = g_{1 - \alpha, 1 - \beta, 1 - \lambda}$, that 
\[
     g_0 (x_\alpha) + g_1 (x_{1 - \alpha})
 \le f_0(x_0) + f_1(x_1) + \alpha (1 - \alpha) |x_0 - x_1|^2.
\]
The identity~\eqref{moremoresquares} in the case $s = \alpha$, 
$t = 1 - \alpha$ and $r = 1/2$ (thus $m = 1/2$) rewrites as
\[
   |(1 - \alpha) x_0 + \alpha x_1|^2
   + |\alpha x_0 + (1 - \alpha) x_1|^2
   + 2 \alpha (1 - \alpha) |x_0 - x_1|^2 = |x_0|^2 + |x_1|^2 
\]
and so we find 
\[
     g_0 (x_\alpha) + |x_\alpha|^2 / 2
      + g_1 (x_{1 - \alpha}) + |x_{1 - \alpha}|^2 / 2
 \le f_0(x_0) + |x_0|^2 / 2 + f_1(x_1) + |x_1|^2 / 2.
\]
Therefore, by Proposition~\ref{propM-gen} with $s = \alpha$, 
$t = 1 - \alpha$ and $r = 1/2 = m$, we arrive at the following generalized
infimal convolution inequality:
\[
     \Bigl(
      \int_{\R^n} e^{-g_{\alpha, \beta, \lambda}} \, d\gamma_n 
      \Bigr)
     \Bigl(
      \int_{\R^n} e^{-g_{1 - \alpha, 1 - \beta, 1 - \lambda}} \, d\gamma_n 
     \Bigr)
 \ge \Bigl( \int_{\R^n} e^{-f_0} \, d\gamma_n \Bigr)
     \Bigl( \int_{\R^n} e^{-f_1} \, d\gamma_n \Bigr).
\]
\end{expl}

\begin{expl}[Exotic situation]
All our examples so far of linear maps $A$ are of ``convex type", meaning
that
\[
     \int_{\Omega_2} \varphi \bigl( A(\alpha)(t) \bigr) \, d\mu_2(t)
 \le \int_{\Omega_1} \varphi( \alpha(s) ) \, d\mu_1(s)
\]
for every convex function $\varphi$ on $\R^n$, while the norm
condition~\NormCond\ on $A$ ensures this property for 
$\varphi(x) = k |x|^2$ only, $k \ge 0$. Here is a ``non convex" example,
with $\Omega_1 = \Omega_2 = \Omega = \{0, 1\}$ and 
$\mu_1 = \mu_2 = \delta_0 + \delta_1$. The space $L^2(\Omega, \R^n)$ is
equal to $\R^n \times \R^n$, the matrix $A$ can be represented by blocks of
size $n \times n$,
\[
 A = \left( \begin{matrix}
       A_1 & B_1 \cr
       C_1 & D_1 \cr
   \end{matrix} \right).
\]
By the constant functions condition~\ConstCond, the image of the constant
function equal to $x \in \R^n$ is 
$(A_1 x + B_1 x, C_1 x + D_1 x) = (x, x)$, which is equivalent to 
$A_1 + B_1 = C_1 + D_1 = I_n$, the identity matrix. One can thus write
\[
 A = \left( \begin{matrix}
       I - B &   B   \cr
         C   & I - C \cr
   \end{matrix} \right).
\]
Since $\|A\| \le 1$, each block must have norm $\le 1$, and this implies
that the diagonal coefficients of $B, C$ are in $[0, 1]$. The condition
$\|A\| \le 1$ means that $A^* A \le I_{2 n}$, which translates to
\[
   \left( \begin{matrix}
     \hphantom{-} B^* B + C^* C  &            - B^* B - C^* C \cr
                - B^* B - C^* C  & \hphantom{-} B^* B + C^* C \cr
   \end{matrix} \right)
   \le
   \left( \begin{matrix}
       \hphantom{-} B + B^*  &            - B - C^* \cr
                  - B^* - C  & \hphantom{-} C + C^* \cr
   \end{matrix} \right).
\]
In the simpler case $C = B$, this amounts to $2 B^* B \le B + B^*$, or 
$\|B x\|^2 \le Bx \cdot x$, for every $x \in \R^n$. For an elementary
explicit example, take $n = 2$ and
\[
   B
 = \left( \begin{matrix}
    \hphantom{-}   b     &  \varepsilon \cr
         - \varepsilon   &      b       \cr
   \end{matrix} \right)
\]
with $b \in (0, 1)$ and $b^2 + \varepsilon^2 \le b$. We see that 
$B = b I_2 - \varepsilon R$, with $R$ the rotation by $\pi / 2$ in the
plane $\R^2$. Then, with $b = 1/3$ and $\varepsilon = \sqrt 2 / 3$, if we
know that
\begin{align*}
     & g_0 \Bigl( \frac 2 3 x_0 + \frac 1 3 x_1
               - \frac {\sqrt 2} 3 R(x_1 - x_0) \Bigr)
      + g_1 \Bigl( \frac 1 3 x_0 + \frac 2 3 x_1 
                   + \frac {\sqrt 2} 3 R(x_1 - x_0) \Bigr) 
 \\
 \le & f_0(x_0) + f_1(x_1)
\end{align*}
for all $x_0, x_1 \in \R^n$, we get the same conclusion as that of
Proposition~\ref{propM}.
\end{expl}

\begin{rmk} \label{noexotic}
Following the previous construction, it is natural to ask if we can
construct an ``exotic" Pr\'ekopa--Leindler situation. The answer is no. 
Let $\alpha \in [0, 1]$. The only $n \times n$ (real) matrix $B$ such that
$|(I_n - B) x + B y|^2 \le (1 - \alpha) |x|^2 + \alpha |y|^2$ for all 
$x, y \in \R^n$ is $B = \alpha I_n$ (try $x = u + t \alpha v$,
$y = u - t (1 - \alpha) v$ and $t \to 0$). In other words, there is no
``exotic example" in the Pr\'ekopa--Leindler case. More generally, if 
$B_1, \ldots, B_k$ are $n \times n$ matrices such that 
$\sum_{j=1}^k B_j = I_n$ and 
$|\sum_{j=1}^k B_j x_j|^2 \le \sum_{j=1}^k \alpha_j |x_j|^2$, with
$\alpha_j \ge 0$ and $\sum_{j=1}^k \alpha_j = 1$, then 
$B_j = \alpha_j I_n$ for $j = 1, \ldots, k$.
\end{rmk}

\section{ \hskip-6pt Generalized Brascamp--Lieb and reverse
 Brascamp--Lieb inequalities} \label{GeneBL}
By slightly modifying \SettA, it is possible to recover and
extend, almost for free, the Brascamp--Lieb inequalities and their reverse
forms. Let us mention that it has been known for some time that the
Brascamp--Lieb inequalities can be recovered using Borell's technique (see
e.g.~\cite{lehec}).

 If the functions $\{ f_s \}$ are defined on $\R^m$ and $\{ g_t \}$ on
$\R^n$, with $m \ne n$, then the linear operator $A$ of \SettA\
should now act from $L^2(X_1, \R^m)$ to $L^2(X_2, \R^n)$ and the constant
functions condition~\ConstCond\ has to be revised. We shall do it by using
projections from the larger space, $\R^m$ or $\R^n$, onto the smaller. To
be precise, our \emph{projections} are adjoint to isometries from the
smaller space into the larger. For example, if $n < m$ and if $T$ is an
isometry from $\R^n$ into $\R^m$, its adjoint $Q = T^*$ is a mapping from
$\R^m$ onto $\R^n$ such that $Q Q^* = \id_{\R^n}$, and $Q^* Q$ is the
orthogonal projection of $\R^m$ onto the range of $T$. Then, for 
$v \in \R^m$, we can compare the image $A(s \mapsto v)$ of the constant
function $\Omega_1 \ni s \mapsto v$ with the constant function 
$\Omega_2 \ni t \mapsto Q v$. Actually, the new setting will be notably
more complicated, introducing a \emph{family} of projections $Q(t)$, 
$t \in \Omega_2$, and comparing the image $A(s \mapsto v)$ with the
function $t \mapsto Q(t) v$. We can also view the new setting as giving a
measurable family of $n$-dimensional subspaces $X(t)$ of $\R^m$,
parameterized by $T(t) : \R^n \to X(t)$, and $Q(t) = T(t)^*$ being the
composition of the orthogonal projection $\pi(t)$ of $\R^m$ onto $X(t)$
with the inverse map of $T(t)$.
 
 Except for what concerns the linear mapping $A$, the modifications are
straightforward, and will be just indicated without rewriting completely
the modified assumption.

\begin{settingB}
The definition of the measure spaces $X_1 = (\Omega_1, \Sigma_1, \mu_1)$,
$X_2 = (\Omega_2, \Sigma_2, \mu_2)$ is as in \SettA, in particular, $\mu_1$
and $\mu_2$ are finite measures. Two integers $m, n \ge 1$ are given, and
the linear operator $A$ acts now as
\[
 A : L^2(X_1, \R^m) \to L^2(X_2, \R^n)
\]
with $\|A\| \le 1$, where the norms are computed with respect to the
Euclidean norm $|\cdot |$ on $\R^m$, $\R^n$ and to the measures $\mu_1$ and
$\mu_2$, respectively. The constant functions condition is modified as
follows:

\begin{itemize}
\item[\ConstCondA] 
if $m \le n$, there exists a $\Sigma_1$-measurable family 
$\Omega_1 \ni s \mapsto P(s)$ of projections $P(s)$ from $\R^n$ onto $\R^m$
such that, for every vector $w_0 \in \R^n$, 
$A \bigl(s \mapsto P(s) w_0 \bigr)(t) = w_0$ for $\mu_2$-almost every 
$t \in \Omega_2$,
\item[\ConstCondB] if $m \ge n$, there exists a $\Sigma_2$-measurable
family $\Omega_2 \ni t \mapsto Q(t)$ of projections $Q(t)$ from $\R^m$ onto
$\R^n$ such that, for every vector $v_0 \in \R^m$, 
$A ( s \mapsto v_0 ) (t) = Q(t) v_0$ for $\mu_2$-almost every 
$t \in \Omega_2$.
\end{itemize}
Note that formally, \ConstCondB\ is the adjoint situation to~\ConstCondA.
Observe that~\ConstCondA\ implies that every $w_0 \in \R^n$ can be
reconstructed from the family of projections $s \mapsto P(s) w_0 \in \R^m$.

 For the conditions on $\{f_s\}$, $\{g_t\}$, we need only replace $\R^n$ by
$\R^m$ for what concerns $f_s$ in the measurability condition~\MeasuCond\
and in the semi-integrability condition~\SemiInt.

\end{settingB}

 We arrive at the following extension of Theorem~\ref{genPL1}.

\begin{theo}\label{genPL}
Under \SettB, we make the additional assumptions that the
functions $f_s, g_t$ are non-negative with $g_t$ lower semicontinuous on
$\R^n$, that for some $\varepsilon_0 > 0$ we have
\begin{equation} \label{domina2}
 \int_{\Omega_1} \log^- 
  \Bigl( 
   \int_{\R^m} \exp \bigl( - f_s(x) - \varepsilon_0 |x|^2 \bigr) \, dx 
  \Bigr) 
   \, d\mu_1(s) < +\infty,
\end{equation}
and that the measures $\mu_1, \mu_2$ are such that
$m \cdot \mu_1(\Omega_1) = n \cdot \mu_2(\Omega_2) < +\infty$.

 If for every $\alpha \in L^2(X_1, \R^m)$, we have
\begin{equation}\label{basic2}
     \int_{\Omega_2} g_t \bigl( (A \alpha) (t) \bigr) \, d\mu_2(t) 
 \le \int_{\Omega_1} f_s (\alpha(s)) \, d\mu_1(s),
\end{equation}
then
\[
     \int_{\Omega_2}
      -\log \left( \int_{\R^n} e^{-g_t} \right) \, d\mu_2(t) 
 \le \int_{\Omega_1}
      -\log \left( \int_{\R^m} e^{-f_s} \right) \, d\mu_1(s).
\]
\end{theo}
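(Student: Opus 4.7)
The plan is to mimic the two-step strategy used for Theorem~\ref{genPL1}: first prove a Gaussian analogue (the extension of Proposition~\ref{GaussianPropo} to Setting~2), then pass to the Lebesgue case by letting the Gaussian variance $T$ tend to infinity. The extra flexibility coming from allowing $m \ne n$ will be absorbed entirely by the projection families $P(s)$ or $Q(t)$ provided by~\ConstCondA/\ConstCondB, and the mass-balance hypothesis $m \mu_1(\Omega_1) = n \mu_2(\Omega_2)$ will be precisely what is needed for the Gaussian normalization constants $(2\pi T)^{m/2}$ and $(2\pi T)^{n/2}$ to cancel when one converts back to Lebesgue.

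For the Gaussian step, I would treat the case $m \ge n$ under~\ConstCondB; the case $m \le n$ under~\ConstCondA\ is symmetric. Take an $\R^m$-valued Brownian motion $B$ with filtration $\mathcal{F}$ and, as in the proof of Proposition~\ref{GaussianPropo}, for each $s \in \Omega_1$ choose in a $\Sigma_1$-measurable way an almost-optimal drift $U(s) = \{U_r(s)\}_{r \le T} \in D_2$ for $f_s$ on $\R^m$; then set $V_r(t) := A(U_r)(t) \in \R^n$. The key new observation is that, since $Q(t) Q(t)^* = \id_{\R^n}$ for each $t$, the process $(Q(t) B_r)_{r \ge 0}$ is a standard $\R^n$-Brownian motion with respect to $\mathcal{F}$, which legitimizes applying the inequality case of Lemma~\ref{ExpoBoundLemma} to $g_t$ with this Brownian motion and the drift $V(t)$. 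Using~\ConstCondB\ in the form $A(s \mapsto B_T)(t) = Q(t) B_T$, the quantity $Q(t) B_T + \int_0^T V_r(t)\, dr$ equals $A(\alpha_\omega)(t)$ for $\alpha_\omega(s) := B_T(\omega) + \int_0^T U_r(s, \omega)\, dr$, so the basic assumption~\eqref{basic2} applies pointwise in $\omega$. The rest of the chain---using $\|A\| \le 1$ to control $\int_{\Omega_2} |V_r(t)|^2 d\mu_2(t)$ by $\int_{\Omega_1} |U_r(s)|^2 d\mu_1(s)$, integrating in $t$ and $s$, and invoking the almost-optimality of the $U(s)$---is the verbatim analogue of the four equations~\eqref{OneEq}--\eqref{FourEq} of Proposition~\ref{GaussianPropo}, and yields the Gaussian conclusion comparing $\int_{\Omega_2} -\log (P_T e^{-g_t})(0)\, d\mu_2(t)$ to $\int_{\Omega_1} -\log (P_T e^{-f_s})(0)\, d\mu_1(s)$ (where $P_T$ denotes the heat semigroup in the appropriate dimension).

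For the passage to the Lebesgue case, I would follow the approximation scheme of Theorem~\ref{genPL1} verbatim: inf-convolution with $k|x|^2$ produces continuous approximations $f_{s, k}, g_{t, k}$ satisfying the exponential bound required by Lemma~\ref{ExpoBoundLemma}; the basic assumption~\eqref{basic2} is preserved under this approximation thanks to the selection Lemma~\ref{L2selection} and the condition $\|A\| \le 1$; and the limits $k \to \infty$ followed by $T \to \infty$ (equivalently $\varepsilon := 1/(2T) \to 0$) are handled by Lemma~\ref{LimitsLemma}, with~\eqref{domina2} playing the role that~\eqref{dominated} played in Theorem~\ref{genPL1}. The dimension-dependent normalization is where the mass-balance hypothesis enters: writing $-\log (P_T e^{-f_s})(0) = -\log \int_{\R^m} e^{-f_s(x) - |x|^2/(2T)}\, dx + (m/2) \log (2\pi T)$ and the analogue for $g_t$, the constants $(m/2) \log(2\pi T) \mu_1(\Omega_1)$ and $(n/2) \log(2\pi T) \mu_2(\Omega_2)$ cancel precisely under $m \mu_1(\Omega_1) = n \mu_2(\Omega_2)$. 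The main obstacle I anticipate is really a conceptual check rather than a hard computation: verifying that $Q(t) B$ (or $P(s) B$ in the adjoint case) plays the role of a Brownian motion \emph{jointly measurably in $t$ (or $s$)}, so that Borell's argument can be run inside the $\mu_2$ (or $\mu_1$) integral. This is the direct analogue of Claim~\ref{measurability} and should be addressed by the same measurable-selection machinery.
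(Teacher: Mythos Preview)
Your proposal is correct and follows the same two-step strategy as the paper. The only real difference is in how you package the Gaussian step: you run Borell's argument directly, using that $Q(t)B$ (resp.\ $P(s)B$) is a lower-dimensional Brownian motion with respect to the full filtration~$\mathcal F$; the paper instead lifts everything to $\R^{\max(m,n)}$---setting $g_{1,t}(x) = g_t(Q(t)x)$ in case~\ConstCondB, or $f_{1,s}(y) = f_s(P(s)y)$ in case~\ConstCondA---and then invokes Proposition~\ref{GaussianPropo} essentially as a black box (with the small workaround~\eqref{WasEnough} in case~\ConstCondB). If you unwind the lift, the two arguments coincide line by line; the paper's packaging has the minor advantage that the measurability concern you flag at the end is already absorbed by Claim~\ref{measurability}, since after lifting the underlying Brownian motion no longer depends on the parameter $s$ or $t$. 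One small caveat: the two cases~\ConstCondA\ and~\ConstCondB\ are not quite ``symmetric'' in your direct approach---in~\ConstCondA\ the projections sit on the $f$-side where the \emph{optimal} drift is constructed, so the $s$-dependence of the Brownian motion $P(s)B$ makes the measurable selection of $U(s)$ a touch more delicate than in~\ConstCondB; the paper's lift sidesteps this entirely.
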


 Not only the argument for deriving Theorem~\ref{genPL} follows the proof
of Theorem~\ref{genPL1}, but in its Gaussian version, Theorem~\ref{genPL}
is already contained in Proposition~\ref{GaussianPropo}. Indeed, after
reducing to the Gaussian case, we further approximate as before the
functions $f_s$ and $g_t$ by inf-convolution, in order to be in a position
to apply the proposition below, just as we did for proving
Theorem~\ref{genPL1}. The assumption 
$m \cdot \mu_1(\Omega_1) = n \cdot \mu_2(\Omega_2)$ is needed to pass to
the limit from the Gaussian case for $P_T$, $T \to +\infty$, in respective
dimensions $m$ and $n$.

 For the Gaussian version below, we will have not much to add, except maybe
for the case~\ConstCondB\ of \SettB. So
Proposition~\ref{GaussianPropo} almost includes all possible geometric
situations around the Pr\'ekopa--Leindler inequality, including these
generalized Brascamp--Lieb inequalities. 

\begin{prop} \label{GaussianPropo2}
Under \SettB, assume that $f_s$, $g_t$ are continuous $\ge 0$ on $\R^m$ and
$\R^n$, respectively (or bounded from below as in
Remark~\ref{rem:restrictions}), and that the functions $f_s$ satisfy the
exponential bound~\eqref{exponebound} on $\R^m$. Then, the basic
assumption~\eqref{basic2} implies that
\begin{align}
     \int_{\Omega_2}
      -\log & \left( \int_{\R^n} e^{-g_t(y)} \, d\gamma_{n, \tau}(y) \right)
       \, d\mu_2(t) 
 \label{conclGL2}
 \\
 &\le \int_{\Omega_1} 
      -\log \left( \int_{\R^m} e^{-f_s(x)} \, d\gamma_{m, \tau}(x) \right)
       \, d\mu_1(s).
 \notag
\end{align}
\end{prop}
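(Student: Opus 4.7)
My plan is to follow the proof of Proposition~\ref{GaussianPropo} essentially verbatim, the only modification being how to realize the ``source" and ``target" Brownian motions on a single probability space when the two sides live in different dimensions. The relevant fact is purely linear-algebraic: because each projection $P(s)$ (resp.\ $Q(t)$) is by definition adjoint to an isometry, one has $P(s) P(s)^* = \id_{\R^m}$ (resp.\ $Q(t) Q(t)^* = \id_{\R^n}$). Consequently, if $(B^n_r)$ is a standard $\R^n$-valued Brownian motion, then $(P(s) B^n_r)_r$ is for every fixed $s$ a standard $\R^m$-valued Brownian motion, and symmetrically $(Q(t) B^m_r)_r$ is a standard $\R^n$-valued Brownian motion for every $t$ when $(B^m_r)$ is a standard $\R^m$-valued one.

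Treat case~\ConstCondB\ ($m \ge n$) first. Fix a standard $\R^m$-valued Brownian motion $(B^m_r)_{r \le T}$ with $T = \tau$, and select, via the measurable selection argument of Claim~\ref{measurability}, a jointly measurable family $\{U(s)\}_{s \in \Omega_1}$ of $\R^m$-valued drifts in $D_2$ that are $\varepsilon$-almost optimal in the Borell representation of $-\log P_T(e^{-f_s})(0)$. Since $f_s \ge 0$ and we may assume the right-hand side of~\eqref{conclGL2} is finite, the estimate~\eqref{pourquoiL2} survives and yields $U \in L^2(\Omega_1 \times [0, T] \times E, \R^m)$. Define $V_r(t, \omega) := A(U_{r, \omega})(t) \in \R^n$ and $\alpha_\omega(s) := B^m_T(\omega) + \int_0^T U_r(s, \omega)\, dr$. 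By linearity of $A$ and~\ConstCondB,
\[
 (A \alpha_\omega)(t) = Q(t) B^m_T(\omega) + \int_0^T V_r(t, \omega) \, dr,
\]
which is exactly ``Brownian motion plus drift" for the $\R^n$-valued Brownian motion $(Q(t) B^m_r)_r$ and drift $V(t)$. Apply the inequality case of Lemma~\ref{ExpoBoundLemma} to each $g_t$, integrate in $t \in \Omega_2$, bound the $g_t$-term by $\int_{\Omega_1} \E f_s(\alpha_\omega(s))\, d\mu_1(s)$ using \eqref{basic2} and Fubini, and bound $\int_{\Omega_2} |V_r(t, \omega)|^2 \, d\mu_2(t) \le \int_{\Omega_1} |U_r(s, \omega)|^2 \, d\mu_1(s)$ for each $r, \omega$ via $\|A\| \le 1$; the near-optimality of $U(s)$ then delivers~\eqref{conclGL2} up to an additive $\varepsilon \mu_1(\Omega_1)$, and one lets $\varepsilon \to 0$.

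Case~\ConstCondA\ ($m \le n$) is the mirror image: start from an $\R^n$-valued Brownian motion $(B^n_r)$, pick $\varepsilon$-almost optimal $\R^m$-valued drifts $U(s)$ for $f_s$ relative to the $\R^m$-valued Brownian motion $(P(s) B^n_r)_r$, and take $\alpha_\omega(s) := P(s) B^n_T(\omega) + \int_0^T U_r(s, \omega)\, dr$, so that by~\ConstCondA\ (applied to the random ``constant" $w_0 = B^n_T(\omega)$) one has $(A \alpha_\omega)(t) = B^n_T(\omega) + \int_0^T V_r(t, \omega)\, dr$ with $V_r(t, \omega) = A(U_{r, \omega})(t)$. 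From here the argument is identical to that of case~\ConstCondB.

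The only real obstacle is the interplay between measurable selection and the parameter-dependent Brownian couplings: one must ensure that the family of near-optimal drifts can be chosen jointly measurably in $s$, and that $(P(s) B^n)$ (resp.\ $(Q(t) B^m)$) is a genuine Brownian motion for every value of the parameter on a single underlying probability space. The first point is granted by Claim~\ref{measurability}, used just as in the proof of Proposition~\ref{GaussianPropo}; the second is immediate from $P(s) P(s)^* = \id_{\R^m}$ (resp.\ $Q(t) Q(t)^* = \id_{\R^n}$), a direct consequence of the projection-as-adjoint-of-an-isometry convention adopted in \SettB, which is why, as the authors announce, almost nothing new needs to be added beyond what was done for Proposition~\ref{GaussianPropo}.
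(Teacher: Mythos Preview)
Your argument is correct, but it is packaged differently from the paper's. The paper does \emph{not} redo Borell's stochastic argument with projected Brownian motions; instead it lifts everything to the larger dimension $\max(m,n)$ and invokes Proposition~\ref{GaussianPropo} as a black box. In case~\ConstCondA\ it replaces $f_s$ by $f_{1,s}(y) = f_s(P(s)y)$ on $\R^n$ and $A$ by $A_1 \alpha := A(s \mapsto P(s)\alpha(s))$, checks that $A_1$ satisfies~\NormCond\ and~\ConstCond\ on $\R^n$, and then applies Proposition~\ref{GaussianPropo} directly; in case~\ConstCondB\ it replaces $g_t$ by $g_{1,t}(x) = g_t(Q(t)x)$ on $\R^m$, and although the lifted operator $A_1$ no longer satisfies~\ConstCond\ exactly, the paper observes that the weaker identity $g_{1,t}(A_1(v_0+\alpha)(t)) = g_{1,t}(v_0 + (A_1\alpha)(t))$ is all that is actually used at line~\eqref{IsEnough} of the proof of Proposition~\ref{GaussianPropo}. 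Your route is more transparent probabilistically (it makes explicit that $Q(t)B^m$ and $P(s)B^n$ are genuine Brownian motions in the ambient filtration), while the paper's route buys a cleaner appeal to already-established machinery.

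One point deserves a little more care than you give it. In case~\ConstCondA\ your Brownian motion $P(s)B^n$ depends on $s$, so the drift you must select is near-optimal for $f_s$ relative to a \emph{parameter-dependent} Brownian motion. Claim~\ref{measurability} as proved treats a fixed Brownian motion, and ``used just as in the proof of Proposition~\ref{GaussianPropo}'' is a slight overstatement. The fix is easy: either note that the SDE $dX_r = P(s)\,dB^n_r - \nabla(\varphi_s)_r(X_r)\,dr$ has coefficients depending measurably (indeed Lipschitz) on $s$ through $P(s)$, so the continuity argument in Claim~\ref{measurability} extends verbatim; or, equivalently, observe that a near-optimal $\R^n$-valued drift $\widetilde U(s)$ for the lifted function $f_{1,s}(y)=f_s(P(s)y)$ relative to $B^n$ is supplied by Claim~\ref{measurability} as stated, and $U(s):=P(s)\widetilde U(s)$ is then a near-optimal $\R^m$-valued drift for $f_s$ relative to $P(s)B^n$. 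This second reduction is precisely the content of the paper's lifting, so the two approaches meet here.
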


 As before, in the case where the measures $\mu_1$ and $\mu_2$ have finite
support, we can remove all the additional assumptions on $f_s$ and $g_t$.

\begin{proof}
We want to apply Proposition~\ref{GaussianPropo} but we have to deal with
the fact that dimensions $m$ and $n$ are now different. We shall do it by
reducing to the case when both dimensions are equal to $\max(m, n)$.
\smallskip

 In case~\ConstCondA, when $m \le n$, we ``extend" $f_s$ to $\R^n$ by
defining $f_{1, s}(y) = f_s(P(s) y)$, $y \in \R^n$, so that 
\begin{equation} \label{ChangeDim}
   \int_{\R^n} e^{- f_{1, s}(y)} \, d\gamma_{n, \tau}(y)
 = \int_{\R^m} e^{- f_s(x)} \, d\gamma_{m, \tau}(x).
\end{equation}
For every $\alpha \in L^2(X_1, \R^n)$, define
$\alpha_1 \in L^2(X_1, \R^m)$ by $\alpha_1(s) = P(s) \alpha(s)$ and set
$A_1(\alpha) = A (\alpha_1)$. Then $A_1$ is linear from $L^2(X_1, \R^n)$
to $L^2(X_2, \R^n)$, and clearly $\|A_1\| \le \|A\| \le 1$. If $w_0$ is a
fixed vector in $\R^n$, then 
$A_1(s \mapsto w_0) = A(s \mapsto P(s) w_0) = w_0$ by~\ConstCondA, so $A_1$
satisfies the constant functions condition~\ConstCond. Next, by the basic
assumption of Theorem~\ref{genPL} applied to~$\alpha_1$, we get that
\[
     \int_{\Omega_2} g_t \bigl( (A_1 \alpha)(t) \bigr) \, d\mu_2(t)
  =  \int_{\Omega_2} g_t \bigl( (A \alpha_1)(t) \bigr) \, d\mu_2(t)
 \le \int_{\Omega_1} f_s \bigl( \alpha_1(s) \bigr) \, d\mu_1(s)
\]
\[
  =  \int_{\Omega_1} f_s \bigl( P(s) \alpha(s) \bigr) \, d\mu_1(s)
  =  \int_{\Omega_1} f_{1, s} ( \alpha(s) ) \, d\mu_1(s).
\]
We see that $A_1$, $\{ f_{1, s} \}$ and $\{ g_t \}$ satisfy the assumptions
of Proposition~\ref{GaussianPropo}, including the basic
assumption~\eqref{condPL} on $\R^n$. The result follows therefore
from~\eqref{ChangeDim} and from the conclusion of
Proposition~\ref{GaussianPropo} for $\{ f_{1, s} \}$ and $\{ g_t \}$.
\smallskip

 In case~\ConstCondB, when $m \ge n$, let $T(t)$ be the isometry from
$\R^n$ into $\R^m$ adjoint to the projection $Q(t)$, so that 
$Q(t) T(t) = \id_{\R^n}$. The function $g_t$ is extended to $\R^m$ by
setting $g_{1, t}(x) = g_t(Q(t) x)$, $x \in \R^m$. For 
$\alpha \in L^2(X_1, \R^m)$, we set $(A_1 \alpha) (t) = T(t) (A \alpha)(t)$
and get that $Q(t) (A_1 \alpha)(t) = (A \alpha)(t)$. We have that 
$\|A_1\| = \|A\| \le 1$. If $\alpha_0 = v_0$ is a constant function from
$\Omega_1$ to $\R^m$, we know by~\ConstCondB\ that 
$(A \alpha_0)(t) = Q(t) v_0$. Then 
$(A_1 \alpha_0)(t) = T(t) (A \alpha_0)(t) = T(t) Q(t) v_0$, not equal to
$v_0$ in general. The constant functions condition~\ConstCond\ is not
satisfied by $A_1$, but still we have
\begin{equation} \label{WasEnough}
   g_{1, t} \bigl( A_1 (v_0 + \alpha)(t) \bigr) 
 = g_{1, t} \bigl( v_0 + (A_1 \alpha)(t) \bigr),
 \quad
 t \in \Omega_2,
\end{equation}
for every $\alpha \in L^2(X_1, \R^m)$. This is because 
$g_{1, t}(x_1) = g_{1, t}(x_2)$ when $Q(t) x_1 = Q(t) x_2$, and because we
have by~\ConstCondB\ that 
\[
   Q(t) A_1 (v_0 + \alpha)(t)
 = A (v_0 + \alpha)(t)
 = Q(t) v_0 + (A \alpha)(t)
 = Q(t) \bigl( v_0 + (A_1 \alpha)(t) \bigr).
\]
The basic assumption~\eqref{condPL} is satisfied for the families
$\{f_s\}$, $\{g_{1,t}\}$ of functions on $\R^m$ and the mapping $A_1$:
since
$g_{1, t} \bigl( (A_1 \alpha)(t) \bigr) = g_t \bigl( (A \alpha)(t) \bigr)$,
we get
\[
     \int_{\Omega_2} 
      g_{1, t} \bigl( (A_1 \alpha)(t) \bigr) \, d\mu_2(t)
  =  \int_{\Omega_2} 
      g_t \bigl( A(\alpha)(t) \bigr) \, d\mu_2(t)
 \le \int_{\Omega_1} f_s ( \alpha(s) ) \, d\mu_1(s).
\]
We stressed during the proof of Proposition~\ref{GaussianPropo} that the
equality~\eqref{WasEnough} is enough to run the argument (see
equation~\eqref{IsEnough} and the lines around it). Finally, as before,
observe that
\[
   \int_{\R^n} e^{- g_t(y)} \, d\gamma_{n, \tau}(y)
 = \int_{\R^m} e^{- g_{1, t}(x)} \, d\gamma_{m, \tau}(x),
\]
and use Proposition~\ref{GaussianPropo} to get the desired conclusion.
\end{proof}

\begin{rmk}
The normalization $\|A\| \le 1$ can be replaced by the following
assumptions. Let $\kappa := \|A\| > 0$ and assume that 
$\kappa^2 \cdot m \cdot \mu_1(\R^m) = n \cdot \mu_2(\R^n)$. If for every
$\alpha$ in $L^2(X_1, \R^m)$, we have
\[
     \int_{\Omega_2} g_t ( (A \alpha)(t) ) \, d\mu_2(t) 
 \le \kappa^2 \int_{\Omega_1} f_s (\alpha(s)) \, d\mu_1(s),
\]
then
\[
     \int_{\Omega_2} 
      - \log \left( \int_{\R^n} e^{-g_t} \right) \, d\mu_2(t) 
 \le \kappa^2 \int_{\Omega_1}
          - \log \left( \int_{\R^m} e^{-f_s} \right) \, d\mu_1(s).
\] 
\rm
The reader will just look at inequality~\eqref{FourEq} and the inequality
before it.
\end{rmk}
\medskip

 We shall examine now particular cases of Theorem~\ref{genPL}. We will see
that it contains both the Brascamp--Lieb and reverse Brascamp--Lieb
inequalities in their geometric form. 

 Let us take unit vectors $u_1, \ldots u_N$ in the Euclidean space $\R^d$
and positive reals $c_1, \ldots, c_N$ that decompose the identity of $\R^d$,
meaning that
\begin{equation} \label{decomp}
   \sum_{i=1}^N c_i \, u_i \otimes u_i 
 = \id_{\R^d}.
\end{equation}
This is equivalent to saying that
$x \cdot x = \sum_{i=1}^N c_i |u_i \cdot x|^2$ for every $x \in \R^d$. If
we consider the one-point space $E_1 = \{0\}$ equipped with the trivial
probability measure $\nu_1$, and the measure space
\[
   (E_2, \nu_2) 
 = \Bigl( \bigl\{ 1, \ldots N \bigr\}, \, 
          \sum_{i=1}^N c_i \delta_i \Bigr),
 \quad
 \nu_2(E_2) = \sum_{i=1}^N c_i = d,
\]
then~\eqref{decomp} is equivalent to saying that the mapping
\begin{equation} \label{isom}
 U : x \in \R^d \mapsto (x \cdot u_1, \ldots, x \cdot u_N)
\end{equation}
is an isometry from $L^2(E_1, \nu_1, \R^d) \simeq \R^d$ into
$L^2(E_2, \nu_2, \R) \simeq \R^N$.

 In order to recover the reverse Brascamp--Lieb inequalities, consider 
\[
 X_1 = (E_2, \nu_2), \ m = 1, 
 \quad
 X_2 = (E_1, \nu_1), \ n = d,
\]
satisfying
\[
 m \mu_1(\Omega_1) = \nu_2(E_2) = d = n \mu_2(\Omega_2).
\]
Define $A : \R^N \simeq L^2(X_1, \R^m) \to \R^d \simeq L^2(X_2, \R^n)$ to
be the linear operator
\[
   A(t_1, \ldots, t_N)
 = \sum_{i=1}^N c_i \, t_i \, u_i,
 \quad 
 t_i \in \R.
\]
Then $A$ is adjoint to the into isometry $U$ of~\eqref{isom} between our
$L^2$ spaces, and $A$ satisfies the condition~\ConstCondA\ of 
\SettB, with the family of projections $P_i v = v \cdot u_i$,
$i \in \Omega_1$, because we have by~\eqref{decomp} that
\[
 A(i \mapsto P_i v) = \sum_{i=1}^N c_i (v \cdot u_i) u_i = v.
\]
The conclusion reads as follows. If for every $t_1, \ldots, t_N \in \R$ we
have
\[
     g \Bigl( \sum_{i=1}^N t_i c_i u_i \Bigr)
 \le \sum_{i=1}^N c_i \, f_i(t_i)
 \quad \text{then} \quad
     \int_{\R^d} e^{-g}
 \ge \prod_{i=1}^N \left( \int_\R e^{-f_i} \right)^{c_i}.
\]
This is the reverse Brascamp--Lieb inequality of
Barthe~\cite{Barthe:1998gu} in its geometric form. Actually, in the same
way, we can see that the formulation above contains the ``continuous"
statement given in~\cite{barthe04} (that can be easily derived by
approximation, anyway).

 In order to recover the classical Brascamp--Lieb inequalities, apply the
theorem with $X_1 = (E_1, \nu_1)$, $m = d$, $X_2 = (E_2, \nu_2)$, $n = 1$
and the linear operator $A : L^2(X_1, \R^m) \simeq 
 \R^d \to L^2(X_2, \R^n) \simeq \R^N$ defined by
$A(x) = (x \cdot u_1, \ldots, x \cdot u_N)$. Then $A = U$, the into
isometry~\eqref{isom} between the corresponding $L^2$ spaces, and it
satisfies the assumption~\ConstCondB\ with the family of projections 
$Q_j(x) = x \cdot u_j$, $j \in \Omega_2 = \{1, \ldots, N\}$. The
conclusion reads as follows: given $f : \R^d \to \R$ and
$g_1, \ldots, g_N : \R \to \R$ such that 
\[
 \forall x \in \R^d, \quad 
     \sum_{j=1}^N c_j g_j (x \cdot u_j)
 \le f(x),
 \quad \text{we have} \quad
     \prod_{j=1}^N \Bigl( \int_\R e^{-g_j} \Bigr)^{c_j}
 \ge \int_{\R^d} e^{-f}.
\]
This is the geometric Brascamp--Lieb inequality, usually stated with the
best possible $f$, which is defined by the equality in place of inequality
above, namely
\[
     \int_{\R^d} 
      \exp \Bigl( - \sum_{j=1}^N c_j g_j (x \cdot u_j) \Bigr) \, dx
 \le \prod_{j=1}^N \Bigl( \int_\R e^{-g_j} \Bigr)^{c_j}.
\]

 It is possible to state (and prove in the same way) a more general form of
Theorem~\ref{genPL} where each function $f_s$ (resp. $g_t$) is defined on a
different Euclidean space $E_s$ (resp. $F_t$), and to deduce from it the
multidimensional geometric Brascamp--Lieb inequalities. In this situation,
we assume that all spaces $E_s, F_t$ are subspaces of a given $\R^\ell$. We
may consider that the space $E_s$ is $\R^{m(s)}$ and that $F_t$ is
$\R^{n(t)}$, with $1 \le m(s), n(t) \le \ell$ and we assume that
\[
   \int_{\Omega_1} m(s) \, d\mu_1(s)
 = \int_{\Omega_2} n(t) \, d\mu_2(t).
\]
The mapping $A$ is defined on the closed subspace $H$ of $L^2(X_1, \R^\ell)$
consisting of those $\alpha$ with $\alpha(s) \in E_s$ for every 
$s \in \Omega_1$. We assume that for every square integrable choice 
$\alpha \in H$, the image $A \alpha \in L^2(X_2, \R^\ell)$ is such that 
$(A \alpha)(t)$ belongs to $F_t$ for every $t \in \Omega_2$. We give
projections $P(s), Q(t)$ from $\R^\ell$ onto $E_s, F_t$ respectively. The
constant functions condition says now that for every $w_0 \in \R^\ell$, we
have that
\[
 A \bigl( s \mapsto P(s) w_0 \bigr) (t) = Q(t) w_0.
\]
The proof mixes the two cases~\ConstCondA\ and~\ConstCondB\ of the proof of
Theorem~\ref{genPL}. We leave this to the reader.

\section{ Technicalities} \label{technical}

\begin{claim} \label{OneMore}
In Theorem~\ref{GtAndFi}, we can remove the assumptions that $g_t$ is \lsc\
and that $f_i \ge 0$ (or bounded from below), $i = 0, 1$.
\end{claim}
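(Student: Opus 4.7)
The plan is to reduce to Theorem~\ref{GtAndFi} in its stated form via two approximations: mollification to make $g_t$ continuous (hence \lsc), followed by truncation of $f_i$ from below. For the first reduction, pick a smooth non-negative compactly supported kernel $\chi_\epsilon$ on $\R^n$ with $\int \chi_\epsilon = 1$, and set $f_i^\epsilon := f_i * \chi_\epsilon$ and $g_t^\epsilon := g_t * \chi_\epsilon$. Using the identity $(1-t)(x_0 - u) + t(x_1 - u) = (1-t)x_0 + t x_1 - u$ together with Fubini's theorem, integrating the basic assumption against $\chi_\epsilon(u)\,du$ produces the analogous basic assumption for the convolved family. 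The $g_t^\epsilon$ are continuous (hence \lsc), and the lower bounds on $f_i$ and $g_t$ are preserved. Theorem~\ref{GtAndFi} in its stated form therefore applies to $(f_0^\epsilon, f_1^\epsilon, \{g_t^\epsilon\})$.

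To pass to the limit $\epsilon \to 0$, Jensen's inequality gives $\int e^{-g_t^\epsilon} \le \int e^{-g_t}$ and $\int e^{-f_i^\epsilon} \le \int e^{-f_i}$, while standard mollifier arguments (a.e.\ convergence at Lebesgue points) combined with Fatou's lemma give the matching lower bound, so $\int e^{-g_t^\epsilon} \to \int e^{-g_t}$ pointwise in $t$ and $\int e^{-f_i^\epsilon} \to \int e^{-f_i}$. The same Jensen estimate yields $-\log \int e^{-g_t^\epsilon} \ge -\log \int e^{-g_t}$, which is a $\mu$-integrable (by the semi-integrability assumption) lower bound, so reverse Fatou under $d\mu(t)$ allows us to pass to the limit in the outer integral. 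This yields the conclusion for $(f_0, f_1, \{g_t\})$ without the \lsc\ assumption on $g_t$.

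For the second reduction, set $f_i^N := \max(f_i, -N)$; then $f_i^N \ge -N$ and the basic assumption is preserved, since $(1-m) f_0^N(x_0) + m f_1^N(x_1) \ge (1-m) f_0(x_0) + m f_1(x_1)$. Applying Remark~\ref{rem:restrictions} (shifting by $+N$) and the previous step yields the conclusion for $(f_0^N, f_1^N, \{g_t\})$. As $N \to \infty$, $f_i^N \searrow f_i$ pointwise, so monotone convergence gives $\int e^{-f_i^N} \nearrow \int e^{-f_i}$, hence $-\log \int e^{-f_i^N} \searrow -\log \int e^{-f_i}$. Taking $N \to \infty$ in the inequality delivers the full claim. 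The main obstacle is the limit interchange in the mollification step: the pointwise-in-$t$ convergence of $-\log \int e^{-g_t^\epsilon}$ must be upgraded to convergence of $\int d\mu(t)$, and the $\mu$-integrable lower bound for reverse Fatou is supplied precisely by the Jensen bound.
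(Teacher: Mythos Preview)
The mollification step has a real gap. You claim that $\int_{\R^n} e^{-f_i^\epsilon}\,dx \to \int_{\R^n} e^{-f_i}\,dx$ by invoking ``a.e.\ convergence at Lebesgue points'' of $f_i$, but Lebesgue-point convergence of $f_i * \chi_\epsilon \to f_i$ requires $f_i$ to be locally integrable, and nothing in your setup guarantees this. At the stage where you mollify, $f_i$ is only bounded below (after your truncation $\max(f_i, -N)$); it can still equal $+\infty$ on a set of positive measure, or have non-integrable singularities, and then $f_i^\epsilon = f_i * \chi_\epsilon$ can be $+\infty$ on a strictly larger set. For a concrete failure in dimension one, let $K \subset [0,1]$ be a fat Cantor set and set $f_0 = 0$ on $K$, $f_0(x) = 1/\mathrm{dist}(x,K)$ on $[0,1]\setminus K$. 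Then $f_0 \ge 0$ is L-proper, yet every neighborhood of a point of $K$ contains a full complementary gap $(a,b)$, on which $\int_a^b dy/\mathrm{dist}(y,K) = +\infty$; hence $f_0^\epsilon \equiv +\infty$ on $K$ for every $\epsilon > 0$, while $f_0 = 0$ there. One finds $\lim_{\epsilon \to 0} \int e^{-f_0^\epsilon} = \int e^{-f_0} - |K| < \int e^{-f_0}$. Since Jensen already gives $-\log\int e^{-f_i^\epsilon} \ge -\log\int e^{-f_i}$ (the wrong direction for you), without convergence on the $f$-side the mollified inequality does not descend to the original one.

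The paper's proof avoids this by truncating on \emph{both} sides before mollifying: after the Gaussian reduction and the replacements $g_{t,N} = \min(g_t,N)$, $f_{i,N} = \max(f_i,-N)$, it uses the basic assumption itself to further replace $f_i$ by $\min(f_i, 2N/\beta)$ with $\beta = \min(m, 1-m)$, the point being that once $g_t \le N$ and the other $f_j \ge -N$, the hypothesis is automatically satisfied whenever $\mu_i f_i(x_i) \ge 2N$. Only then, with all functions bounded, does convolution yield genuine continuous approximants and permit a clean passage to the limit. Your second reduction (truncate $f_i$ from below, pass to the limit by monotone convergence) is fine; the fix is to insert this additional truncation from above for $f_i$, together with $g_{t,N}=\min(g_t,N)$, \emph{before} you mollify. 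As a minor point, your claim that $g_t^\epsilon$ is continuous is also an overstatement when $g_t$ takes the value $+\infty$; it is only \lsc, though that suffices for Theorem~\ref{GtAndFi}.
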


\begin{proof}[Sketch of proof]
As in the proof of Theorem~\ref{genPL1}, we begin by replacing the Lebesgue
measure in $\int_{\R^n} e^{-f_i(x)} \, dx$ and 
$\int_{\R^n} e^{-g_t(x)} \, dx$ by a Gaussian probability measure $\gamma$.
The justification is the same here for the $g_t$ side, but is easier for
$f_i$, $i = 0, 1$, a simple application of monotone convergence. Next, as
in the proof of Theorem~\ref{discretePL1}, one can replace $g_t$ by 
$g_{t, N} = \min(g_t, N)$ and $f_i$ by $f_{i, N} = \max(f_i, -N)$: since
$g_{t, N}$ increases to $g_t$ as $N \to +\infty$, the argument for 
\[
  \int_{\Omega_2} 
   \log \Bigl( \int_{\R^n} e^{-g_{t, N}} \, d\gamma \Bigr) \, d\mu_2(t)
 \longto_N
  \int_{\Omega_2} 
   \log \Bigl( \int_{\R^n} e^{-g_t} \, d\gamma \Bigr) \, d\mu_2(t)
\] 
is the same as at the end of the proof of Theorem~\ref{genPL1}, and for 
$f_{i, N}$, decreasing to $f_i$, the reason is monotone convergence again
in $\int_{\R^n} e^{-f_{i, N}} \, d\gamma$, $i = 0, 1$. Then 
$0 \le g_{t, N} \le N$ and $f_{0, N}, f_{1, N} \ge - N$. As in the proof of
Theorem~\ref{discretePL1}, one can then assume that
$f_{0, N}, f_{1, N} \le 2 N / \beta$ with $\beta = \min(m, 1 - m)$.
Finally, approximation by convolution reduces to the case of bounded
continuous functions, and one can conclude by applying
Proposition~\ref{GaussianPropo}.
\end{proof}

\begin{proof}[Proof of Lemma~\ref{ExpoBoundLemma}]
We may assume that $f \ge 0$. For some integer $N \ge 0$, let 
$\varphi(x) = \min(f(x), N) \ge 0$. As in~\eqref{Fr}, define $\varphi_r$
by $e^{-\varphi_r} = P_{T - r} (e^{-\varphi})$, $0 \le r \le T$ and let
$\Phi_0 = \varphi_0(0)$, $F_0 = - \log P_{T} (e^{-f})(0) \ge \Phi_0 > 0$.
Note that $\varphi \ge 0$ implies $\varphi_r \ge 0$. Since $\varphi$ is
bounded and continuous, we may by Lemma~\ref{LBorell} consider the optimal
martingale corresponding to $\varphi$,
\[
   M_r 
 = \varphi_r \Bigl( B_r + \int_0^r u_\rho \, d\rho \Bigr)
    + \frac 1 2 \int_0^r |u_\rho|^2 \, d\rho
 = M_0 - \int_0^r u_\rho \cdot dB_\rho,
 \quad
 0 \le r \le T,
\]
with $M_0 = \Phi_0$, $u_r = - \nabla \varphi_r(X_r)$ and 
$X_r = B_r + \int_0^r u_\rho \, d\rho$. Define the square function
$(S_r)_{0 \le r \le T}$ of the martingale $(M_r - M_0)_{0 \le r \le T}$ by
\[
 S_r = \Bigl( \int_0^r |u_\rho|^2 \, d\rho \Bigr)^{1/2}.
\]
We know by~\eqref{converges} that $S_r$, $X_r$ and $M_r$ converge almost
surely and in $L^2$ to $S_T$, $X_T$ and $M_T$. Observe that $|u_r|$, $S_r$
and $M_r$ are bounded random variables for each fixed $r < T$. Consider the
exponential martingale $e^{\lambda M_r - \lambda^2 S_r^2 / 2}$ for 
$\lambda = 1/2$ and $0 \le r < T$, namely
\[
   \exp \bigl( M_r / 2 - S_r^2 / 8 \bigr)
 = \exp \bigl( \varphi_r(X_r) / 2 + S_r^2 / 4 - S_r^2 / 8 \bigr)
 = \exp \bigl( \varphi_r(X_r) / 2 + S_r^2 / 8 \bigr).
\]
By Fatou and the martingale property, we have that
\begin{align*}
     \E \exp \bigl( S_T^2 / 8 \bigr)
 &\le \E \exp \bigl( \varphi(X_T) / 2 + S_T^2 / 8 \bigr)
 \\
 &\le \lim_{r \to T}
       \E \exp \bigl( \varphi_r(X_r) / 2 + S_r^2 / 8 \bigr)
  =  e^{ \Phi_0 / 2 }.
\end{align*}
Let $Y_T = T^{-1/2} |X_T| \le T^{-1/2} |B_T| + S_T$ and observe that in
$\R^n$, one has $\E e^{|B_1|^2 / 4} = 2^{n/2}$. For every $\lambda > 0$,
using Cauchy--Schwarz and the inequality
$2 \sigma \tau \le c \sigma^2 + \tau^2 / c$ when $c, \sigma, \tau > 0$, we
can write
\begin{align}
     \Bigl( \E e^{ \lambda Y_T } \Bigr)^2
 &\le \Bigl( \E e^{ 2 \lambda T^{-1/2} |B_T| } \Bigr)
     \Bigl( \E e^{ 2 \lambda S_T } \Bigr)
  =  \Bigl( \E e^{ 2 \lambda |B_1| } \Bigr)
     \Bigl( \E e^{ 2 \lambda S_T } \Bigr)
 \label{toto}
 \\
 &\le e^{4 \lambda^2} \E e^{ |B_1|^2 / 4 } 
      e^{ 8 \lambda^2} \E e^{ S_T^2 / 8}
  =  2^{n / 2} e^{ 12 \lambda^2} \E e^{ S_T^2 / 8 }
 \notag
 \\
 &\le 2^{n / 2} e^{ 12 \lambda^2 + \Phi_0 / 2}
 \le e^{ 12 \lambda^2 + (F_0 + n) / 2 }.
 \notag
\end{align}
We have by the exponential bound~\eqref{ExponeBound} that
\[
     \delta
 :=  \E \bigl( f(X_T) - \varphi(X_T) \bigr)
 \le \E \bigl( \mathbf{1}_{ \{ f(X_T) > N \} } f(X_T) \bigr)
 \le a \E \bigl( \mathbf{1}_{ \{ M_T \ge N \} } e^{b |X_T|} \bigr).
\]
Using~\eqref{toto} with $\lambda = 2 b \sqrt T$, we get by
Cauchy--Schwarz and Markov 
\begin{equation} \label{delta}
     \delta
 \le a \sqrt {\frac {\E M_T} N} 
      \sqrt{ e^{24 b^2 T + (F_0 + n) / 4} }
 \le a \sqrt {\frac {F_0} N} e^{12 b^2 T + (F_0 + n) / 8}.
\end{equation}
This proves that for any given $\varepsilon > 0$, when $N$ is so large that
$\delta < \varepsilon$, the optimal drift $u$ for $\varphi$ gives
\begin{align*}
     & \E \Bigl[ f \Bigl( B_T + \int_0^T u_r \, dr \Bigr)
      + \frac 12 \int_0^T u^2_r \, dr \Bigr] - \varepsilon
 \\
 \le & \E \Bigl[ \varphi \Bigl( B_T + \int_0^T u_r \, dr \Bigr)
      + \frac 12 \int_0^T u^2_r \, dr \Bigr]
  =  \Phi_0
 \le F_0.
\end{align*}

 Conversely, since $1 \ge P_T e^{- \min(f, N)} \searrow P_T e^{-f}$ when 
$N \to +\infty$, we may start by choosing $N$ large enough, so that 
$F_0 < \Phi_0 + \varepsilon$. For any drift $u$ in $D_2$, the inequality
case for $\varphi$ gives
\begin{align*}
     F_0 - \varepsilon
  &<  \Phi_0 
 \le \E \Bigl[ \varphi \Bigl( B_T + \int_0^T u_\rho \, d\rho \Bigr)
          + \frac 1 2 \int_0^T |u_\rho|^2 \, d\rho \Bigr]
\\
 &\le \E \Bigl[ f \Bigl( B_T + \int_0^T u_\rho \, d\rho \Bigr)
              + \frac 1 2 \int_0^T |u_\rho|^2 \, d\rho \Bigr].
\end{align*}
This implies the inf formula~\eqref{borell} for the function $f$. Observe
that this final part does not use any upper bound on $f$, proving thus the
last claim of Lemma~\ref{ExpoBoundLemma}.
\end{proof}

\begin{lemma} \label{addingpsi}
Assume that $\mu$ is a finite measure on $(\Omega, \Sigma)$, $\nu$ a
probability measure on~$\R^n$, $(s, x) \mapsto f_s(x)$ a
$\Sigma \otimes \mathcal{B}_{\R^n}$-measurable function, and assume that 
$s \mapsto \log \Bigl( \int_{\R^n} e^{-f_s} \, d\nu \Bigr)$ is
$\mu$-integrable on $\Omega$. For every $\varepsilon > 0$, there exists a
continuous function $\psi \ge 0$ on $\R^n$, tending to $+\infty$ at
infinity, such that $0 \le \psi(x) \le |x|^2$ and
\[
      \int_\Omega \!
       {-} \log \Bigl(
               \int_{\R^n} e^{ {-} f_s(x) - \psi(x)} \, d\nu(x) 
              \Bigr) d\mu(s)
 {\le} \int_\Omega \!
      - \log \Bigl( \int_{\R^n} e^{ {-} f_s(x)} \, d\nu(x) \Bigr) d\mu(s)
       + \varepsilon.
\]
\end{lemma}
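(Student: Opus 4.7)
The plan is to reduce the inequality, via Jensen's inequality, to a tightness-style construction of a continuous function $\psi$ with small integral against a \emph{single} finite Borel measure on $\R^n$. Since $s \mapsto \log \int_{\R^n} e^{-f_s} \, d\nu$ is $\mu$-integrable, we have $0 < Z_s := \int_{\R^n} e^{-f_s} \, d\nu < +\infty$ for $\mu$-a.e.~$s$, so we may define probability measures $\nu_s$ on $\R^n$ by $d\nu_s = Z_s^{-1} e^{-f_s} \, d\nu$. A short computation using $\int_{\R^n} e^{-f_s - \psi} \, d\nu = Z_s \int_{\R^n} e^{-\psi} \, d\nu_s$ shows that the difference between the two sides of the desired inequality equals $\int_\Omega -\log \int_{\R^n} e^{-\psi} \, d\nu_s \, d\mu(s)$. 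Since $\psi \ge 0$ and $-\log$ is convex, Jensen's inequality applied to the probability measure $\nu_s$ yields $-\log \int_{\R^n} e^{-\psi} \, d\nu_s \le \int_{\R^n} \psi \, d\nu_s$. Hence, introducing the finite Borel measure $\eta := \int_\Omega \nu_s \, d\mu(s)$ on $\R^n$ (well-defined by Fubini, using the joint measurability of $(s,x)\mapsto f_s(x)$; its total mass is $\mu(\Omega) < +\infty$), it suffices to exhibit a continuous $\psi : \R^n \to [0, +\infty)$ with $\psi(x) \le |x|^2$, $\psi(x) \to +\infty$ as $|x| \to +\infty$, and $\int_{\R^n} \psi \, d\eta \le \varepsilon$.

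For the construction, the finiteness of $\eta$ implies $\eta(\{|x| \ge R\}) \to 0$ as $R \to +\infty$. I would choose inductively a strictly increasing sequence $R_1 < R_2 < \cdots \to +\infty$ satisfying simultaneously
\[
R_k \ge \sqrt{k} \quad \text{and} \quad \eta\bigl(\{|x| \ge R_k\}\bigr) \le \frac{\varepsilon \, 2^{-k}}{k+1},
\]
and then define $\psi(x) := \phi(|x|)$, where $\phi : [0, +\infty) \to [0, +\infty)$ is the continuous, nondecreasing function that equals $0$ on $[0, R_1]$, takes the value $k-1$ at $R_k$ for $k \ge 1$, and is affine on each interval $[R_k, R_{k+1}]$. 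Then $\phi \to +\infty$ at infinity, and on $[R_k, R_{k+1}]$ we have $\phi \le k \le R_k^2 \le |x|^2$, so $\psi \le |x|^2$ everywhere. Finally,
\[
\int_{\R^n} \psi \, d\eta \le \sum_{k \ge 1} k \, \eta\bigl(\{|x| \ge R_k\}\bigr) \le \sum_{k \ge 1} \frac{k}{k+1} \, \varepsilon \, 2^{-k} \le \varepsilon,
\]
which concludes the argument.

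There is no substantial obstacle: the one point deserving attention is the Fubini justification that $\eta$ is a well-defined Borel measure and that $\int_{\R^n} \psi \, d\eta = \int_\Omega \int_{\R^n} \psi \, d\nu_s \, d\mu(s)$, which follows from Tonelli's theorem applied to the non-negative jointly measurable integrand. The construction of the radii $R_k$ is elementary and merely balances two constraints, namely keeping $\psi$ below $|x|^2$ and making the tails of $\eta$ summable.
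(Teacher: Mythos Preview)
Your argument is correct and takes a genuinely different route from the paper's. The paper builds $\psi$ as an infinite series $\psi(x) = \sum_{j \ge 1} \chi(x/k_j)$ with $\chi(x) = |x|^2/(1+|x|^2)$, choosing the integers $k_j$ inductively so that adding each new term $\chi(\cdot/k_j)$ to the exponent changes the value of $\int_\Omega -\log\bigl(\int e^{-f_s - \cdots}\,d\nu\bigr)\,d\mu$ by at most $\varepsilon 2^{-j}$; the justification at each step is bounded convergence in $s$ (each $u_k(s)$ lies in $[0,1]$ and tends to $0$). Your approach instead first applies Jensen's inequality to the probability measures $\nu_s$ to replace $-\log\int e^{-\psi}\,d\nu_s$ by $\int \psi\,d\nu_s$, which collapses the whole family into the single finite measure $\eta = \int_\Omega \nu_s\,d\mu(s)$, and then builds a piecewise-affine radial $\psi$ from the tail decay of $\eta$. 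The Jensen reduction is a clean simplification that the paper does not exploit; it makes the existence of $\psi$ a direct consequence of the tightness of one measure rather than an iterative construction. The paper's construction, on the other hand, avoids introducing $\eta$ and yields a smooth $\psi$ with an explicit closed form, and does not lose the (small) factor coming from the Jensen step. Both approaches use only elementary measure theory; yours is arguably shorter.
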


\begin{proof}
Consider $\chi(x) = |x|^2 / (1 + |x|^2)$, for $x \in \R^n$. Note that 
$0 \le \chi \le 1$, that $\chi$ tends to~$1$ at infinity, and that for
every $x$, $\chi(x / k)$ decreases to $0$ as $k \to +\infty$. Write
\[
    e^{- F(s)}
 := \int_{\R^n} e^{- f_s(x)} \, d\nu(x),
 \quad
 s \in \Omega,
 \quad \text{let} \quad
 I := \int_\Omega F(s) \, d\mu(s) < +\infty,
\]
and when $F(s) < +\infty$, let $u_k(s)$ be defined by
\[
    e^{- F(s) - u_k(s)}
 := \int_{\R^n} e^{- f_s(x) - \chi(x / k)} \, d\nu(x) 
 \longto_k e^{- F(s)}.
\]
Then $0 \le u_k(s) \le 1$ and $u_k$ converges $\mu$-almost everywhere 
to~$0$; since $\mu$ is finite, we can find $k_1 > 1$ such that
\[
   \int_\Omega
    - \log \Bigl( 
            \int_{\R^n} e^{- f_s(x) - \chi(x / {k_1}) } \, d\nu(x) 
           \Bigr) \, d\mu(s)
 = \int_\Omega \bigl( F(s) + u_{k_1}(s) \bigr) \, d\mu(s) 
 < I + \frac \varepsilon 2.
\]
In the same way, we can find by induction an increasing sequence $(k_j)$ of
integers such that for every integer $p \ge 1$,
\[
   \int_\Omega
    - \log \Bigl(
            \int_{\R^n} 
             \exp \bigl( - f_s(x) - \sum_{j=1}^p \chi(x / k_j) \bigr) 
              \, d\nu(x) 
           \Bigr) \, d\mu(s)
 < I + \sum_{j=1}^p 2^{-j} \varepsilon,
\]
and we can check that
\[ 
     \psi(x) 
  =  \sum_{j=1}^{\infty} \chi(x / k_j)
  =  \Bigl( \sum_{j=1}^{\infty} \frac 1 {k_j^2 + |x|^2} \Bigr) |x|^2 
 \le \Bigl( \sum_{j=1}^{\infty} k_j^{-2} \Bigr) |x|^2 
\]
does the job (note that $\sum_{j \ge 1} k_j^{-2} \le \pi^2 / 6 - 1 < 1$
because $k_1 > 1$).
\end{proof}

\begin{claim} \label{measurability}
The almost optimal drifts $\{ U(s) \}_{s \in \Omega_1}$
in~\eqref{DefiDrift} can be chosen to be $\Sigma_1$-measu\-rable with
respect to $s \in \Omega_1$.
\end{claim}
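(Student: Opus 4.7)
The plan is to treat this as a measurable selection problem, reducing the choice of drift to a fixed countable dense family of ``elementary'' drifts so that the selection can be made by a first-index rule.

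First, I would fix once and for all a countable family $\{u^{(n)}\}_{n\in\N} \subset D_2$ of drifts defined independently of $s$, for example drifts of the form $u_r(\omega) = \sum_{k=1}^{K}\mathbf{1}_{[r_{k-1},r_k)}(r)\,P_k(B_{r_0}(\omega),\ldots,B_{r_{k-1}}(\omega))$ with $0 = r_0 < r_1 < \cdots < r_K = T$ rational, $P_k$ a polynomial in the appropriate number of $\R^n$-variables with rational coefficients, and $K \in \N$. This family is countable and, as elementary progressively measurable step processes, is contained in $D_2$.

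Second, I would check the key analytic fact that this family is ``rich enough'' to approximate the infimum in Borell's formula~\eqref{borell} for every $f$ satisfying the exponential bound~\eqref{ExponeBound}: namely, for every such $f$ and every $\varepsilon > 0$, there is $n\in\N$ with
\[
    \E\Bigl[f\Bigl(B_T + \int_0^T u^{(n)}_r\,dr\Bigr)
        + \tfrac{1}{2}\int_0^T |u^{(n)}_r|^2\,dr\Bigr]
 < -\log(P_T e^{-f})(0) + \varepsilon.
\]
Granting Lemma~\ref{ExpoBoundLemma}, one knows that the infimum over all of $D_2$ equals $-\log(P_T e^{-f})(0)$, so it suffices to approximate an almost-optimal drift $u \in D_2$ by elements of $\{u^{(n)}\}$ in $L^2([0,T]\times E,\R^n)$, while controlling continuity of the two functionals $u \mapsto \E\int_0^T |u_r|^2\,dr$ and $u \mapsto \E f(B_T + \int_0^T u_r\,dr)$. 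The first is trivially continuous in $L^2$-norm, and the second can be handled via the exponential bound on $f$ combined with uniform integrability coming from the bound $\E e^{b|B_T|+ b\int_0^T|u_r|\,dr} < +\infty$ for drifts of uniformly bounded $L^2$-norm.

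Third, once this is in hand, define for each $n$
\[
 \Phi_n(s) := \E\Bigl[f_s\Bigl(B_T + \int_0^T u^{(n)}_r\,dr\Bigr)
        + \tfrac{1}{2}\int_0^T |u^{(n)}_r|^2\,dr\Bigr],
\]
which is $\Sigma_1$-measurable in $s$ by Fubini applied to the jointly measurable map $(s,\omega) \mapsto f_s\bigl(B_T(\omega)+\int_0^T u^{(n)}_r(\omega)\,dr\bigr)$. The function $s \mapsto -\log(P_T e^{-f_s})(0)$ is also $\Sigma_1$-measurable (again by Fubini applied to $(s,y) \mapsto e^{-f_s(y)}$ against the Gaussian law of $B_T$). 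Set
\[
 n(s) := \min\bigl\{\, n \in \N :\ \Phi_n(s) < -\log(P_T e^{-f_s})(0) + \varepsilon \,\bigr\},
\]
which is $\mu_1$-a.e.\ finite by the second step and is $\Sigma_1$-measurable as a first-hitting time for a countable family of measurable conditions. Then $U(s) := u^{(n(s))}$ satisfies~\eqref{DefiDrift} for $\mu_1$-a.e.\ $s$, and $(s,r,\omega) \mapsto U_r(s,\omega)$ is jointly measurable because it takes only countably many values indexed by the measurable function $n(s)$.

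The main obstacle is the second step: ensuring the countable family is dense enough that the infimum in~\eqref{borell} can actually be approximated by it uniformly well, given only the exponential bound on $f_s$. The alternative, cleaner route would be to invoke the Kuratowski--Ryll-Nardzewski selection theorem directly on the multivalued map $s \mapsto \{u \in D_2 : \Phi(s,u) < -\log(P_T e^{-f_s})(0) + \varepsilon\}$, once one verifies that $\Phi$ is jointly measurable on $\Omega_1 \times D_2$ (with $D_2$ equipped with its Polish $L^2$-structure) and that the set on the right is open in $u$ for each $s$; the countable-family reduction above is really a hands-on implementation of this selection theorem.
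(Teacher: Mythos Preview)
Your overall strategy --- reduce to a countable family of elementary drifts and select by a first-index rule --- is sound and genuinely different from the paper's approach. The paper instead reduces to bounded truncations $\varphi_s = \min(f_s, N_k)$ on a countable $\Sigma_1$-partition of $\Omega_1$, uses for each $s$ the \emph{exact} optimal drift $u_r = -\nabla \varphi_{s,r}(X_r)$ coming from the SDE~\eqref{sde}, and then shows that the map $\varphi \mapsto (\text{SDE solution})$ is continuous from a separable Banach space of bounded continuous functions into the path space, so that $s \mapsto U(s)$ is a composition of measurable and continuous maps.

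The gap in your argument is the uniform integrability step. You claim that $\E\exp\bigl(b|B_T| + b\int_0^T |u_r|\,dr\bigr) < +\infty$ holds (uniformly) for drifts of uniformly bounded $L^2$-norm, and this is false. Take $u_r = c\,\mathbf{1}_{A}\,\mathbf{1}_{[T/2,T]}(r)$ with $A \in \mathcal{F}_{T/2}$, $\Proba(A) = p$, and $c = p^{-1/2}$: then $\E\int_0^T|u_r|^2\,dr = T/2$ is fixed, but $\E\exp\bigl(b\int_0^T|u_r|\,dr\bigr) = (1-p) + p\,e^{bT/(2\sqrt{p})} \to +\infty$ as $p \to 0$. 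So $L^2$-convergence of drifts does not by itself give convergence of $\E f_s(B_T + \int_0^T u_r\,dr)$ under the exponential bound~\eqref{ExponeBound}. The fix is to exploit the specific structure of the almost-optimal drift furnished by Lemma~\ref{ExpoBoundLemma}: it is the optimal drift for $\min(f_s,N)$ and satisfies the sub-Gaussian bound $\E e^{S_T^2/8} \le e^{\Phi_0/2}$ established there, which \emph{does} give the needed domination. One can then truncate this drift (the truncations inherit the domination pointwise) and only afterwards approximate the bounded truncation by bounded elementary drifts, where dominated convergence is immediate. Your Kuratowski--Ryll-Nardzewski alternative faces the same issue, since verifying openness of the sublevel sets in $u$ requires exactly this continuity. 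The paper sidesteps the whole difficulty by never approximating in the drift variable at all: it works with the explicit optimal drift for a bounded truncation and proves measurability in $s$ via continuous dependence of the SDE solution on its coefficient.
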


\begin{proof}
In the proof of Proposition~\ref{GaussianPropo} we may start, using
Lemma~\ref{addingpsi}, by replacing $f_s(x) \ge 0$ by $f_s(x) + \psi(x)$,
where $\psi$ tends to $+\infty$ at infinity, without changing much the
value of $\int_{\Omega_1}
 - \log \Bigl( \int_{\R^n} e^{- f_s(x)} \, d\gamma(x) \Bigr) \, d\mu_1(s)$ 
and without destroying the exponential bound~\eqref{exponebound} for
$f_s(x) + \psi(x)$. We shall thus assume that $\psi(x) \le f_s(x)$. Recall
that $T = \tau > 0$ is fixed. 

 Let $\delta > 0$ be given and let $e^{-f_{s, r}} = P_{T - r}(e^{-f_s})$ as
in~\eqref{Fr}. We may find a partition of~$\Omega_1$ in countably many
subsets $A_k \in \Sigma_1$, $k \in \N$, such that on the set $A_k$, the
exponential bound~\eqref{exponebound} for $f_s$ is uniform,
\[
 f_s(x) \le a_k e^{b_k |x|},
 \quad \text{and such that} \quad 
 f_{s, 0}(0) \le F_k,
 \quad s \in A_k.
\]
It is enough to prove the measurability on each set $A_k$ separately. We
can choose $N_k$ so large that 
\[
   a_k \sqrt {\frac {F_k} {N_k}} e^{12 b_k^2 T + (F_k + n) / 8}
 < \delta.
\]
By~\eqref{delta}, we may replace $f_s$, $s \in A_k$, by 
$\varphi_s = \min(f_s, N_k)$ with an error $< \delta$ in the estimate of 
$- \log (P_T e^{-f_s})(0)$. The ``almost optimal drift" $U(s) \in D_2$ for
$f_s$ is chosen equal to the optimal drift for $\varphi_s$.

 After this reduction, $\varphi_s$ is ``constant at infinity" since
$f_s \ge \psi$, hence $A_k \ni s \mapsto \varphi_s$ is a map to the
separable Banach space $Z$ of continuous functions on $\R^n$ tending to a
limit at infinity, equipped with the sup norm, and by the measurability
assumption~\MeasuCond, $s \mapsto \varphi_s(x)$ is $\Sigma_1$-measurable for
every $x \in \R^n$. By a classical theorem of Lusin ---that Hausdorff
topologies weaker than a Polish topology have the same Borel
$\sigma$-algebra--- this implies that $s \mapsto \varphi_s$ is
$\Sigma_1$-measurable from $\Omega_1$ to $Z$. Next, consider the four
mappings sending $f \in Z$ to $e^{-f}$, to $(f_r)_{0 \le r \le T}$ defined
by~\eqref{Fr}, and, for any given $\varepsilon \in (0, T)$, to 
$(\nabla f_r)_{0 \le r \le T - \varepsilon}$ and
$(\nabla^2 f_r)_{0 \le r \le T - \varepsilon}$. On every bounded subset $B$
of~$Z$, these mappings are Lipschitz from~$B$, equipped with the norm of
$Z$, respectively to~$Z$, to $C_b([0, T] \times \R^n)$, or to 
$C_b([0, T - \varepsilon] \times \R^n, \R^p)$, $p = n, n^2$. The Lipschitz
constants depend on $B$, and on $\varepsilon$ for the last two. The map
sending $f \in Z$ to the solution $X_{r, f}$ of the stochastic differential
equation~\eqref{sde} is continuous, in the precise sense that for every
bounded subset $B$ of $Z$, there is $\kappa = \kappa(B, \varepsilon)$ such
that for every $\omega \in E$,
\[
     \sup_{0 \le r \le T - \varepsilon}
      \bigl| X_{r, f}(\omega) - X_{r, g}(\omega) \bigr|
 \le e^{\kappa T} \|f - g\|_\infty,
 \quad
 f, g \in B.
\] 
Indeed, for every fixed $\omega$, we have the deterministic differential
equation in the $r$ variable
\[
   X'_{r, f}(\omega) - X'_{r, g}(\omega) 
 = - \nabla f_r (X_{r, f}(\omega)) + \nabla g_r (X_{r, g}(\omega)).
\]
Writing $X'_{r, f} - X'_{r, g}$ as
$ - (\nabla f_r (X_{r, f}) - \nabla f_r (X_{r, g}))
  - (\nabla f_r (X_{r, g}) - \nabla g_r (X_{r, g}))$, using the Lipschitz
properties mentioned above (implying that the second derivatives of $f_r$
are uniformly bounded for $f \in B$), we see that for every fixed 
$\theta > 0$, the function
$  D(r) 
 = \bigl( |X_{r, f}(\omega) - X_{r, g}(\omega)|^2 + \theta^2 \bigr)^{1/2}$ 
satisfies on $[0, T - \varepsilon]$ a differential inequality of the form
$D' \le \kappa (D + \|f - g\|_\infty)$, with $D(0) = \theta$. It follows
that the ``almost optimal" drift 
$U : (s, r) \mapsto - \nabla \varphi_{s, r}(X_{s, r})$ is
$\Sigma_1$-measurable.
\end{proof}

\begin{lemma} \label{L2selection}
Under \SettA, assume that $f_s \ge 0$ and that for some
$\varepsilon_0 > 0$, the function
$\Omega_1 \ni s \mapsto
 - \log \Bigl( \int_{\R^n} e^{-f_s(x) - \varepsilon_0 |x|^2} \, dx \Bigr)$ 
is $\mu_1$-integrable. There exists then $\alpha \in L^2(X_1, \R^n)$ such
that
\[
   \int_{\Omega_1} f_s(\alpha(s)) \, d\mu_1(s) 
 < +\infty.
\]
The inf-convolution $f_{s, k}$ in~\eqref{InfConvolu} is universally
measurable, and there exists a $\mu_1$-negligible set $N \in \Sigma_1$ such
that $(s, x) \mapsto f_{s, k}(x)$ is 
$\Sigma_1 \otimes \mathcal{B}_{\R^n}$-measurable on 
$(\Omega_1 \setminus N) \times \R^n$. It is possible to find for every
$\alpha \in L^2(X_1, \R^n)$ and every $\varepsilon > 0$ a measurable
selection $u(s)$ such that 
\[
   f_s \bigl( \alpha(s) + u(s) \bigr) + k |u(s)|^2
 < f_{s, k}(\alpha(s)) + \varepsilon,
 \quad
 s \in \Omega_1 \setminus N.
\]
\end{lemma}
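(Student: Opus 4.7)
The three assertions all follow by combining a coercivity estimate with classical measurable projection / selection results, exploiting the Polish structure of $\Omega_1$. For the existence of $\alpha \in L^2(X_1, \R^n)$ with $\int f_s(\alpha(s)) \, d\mu_1(s) < +\infty$, I would introduce
\[
 \phi(s) := \inf_{x \in \R^n} \bigl[ f_s(x) + (\varepsilon_0 / 2) |x|^2 \bigr].
\]
Splitting $\varepsilon_0 |x|^2 = (\varepsilon_0/2)|x|^2 + (\varepsilon_0/2)|x|^2$ inside $Z(s) := \int_{\R^n} e^{-f_s(x) - \varepsilon_0 |x|^2} \, dx$ and bounding the first exponential factor by $e^{-\phi(s)}$ gives
\[
 0 \le \phi(s) \le -\log Z(s) + \log C_n,
 \qquad
 C_n := \int_{\R^n} e^{-(\varepsilon_0/2)|x|^2} \, dx,
\]
so $\phi$ is $\mu_1$-integrable by hypothesis. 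Moreover $\phi$ is universally measurable by the measurable projection theorem applied to the Borel set $\{(s,x,t) \in \Omega_1 \times \R^n \times \R : f_s(x) + (\varepsilon_0/2)|x|^2 < t\}$, hence admits a $\Sigma_1$-measurable version $\tilde\phi$ off a $\mu_1$-null set. A standard measurable selection theorem (Aumann, or Jankov--von Neumann) applied to the multifunction $s \mapsto \{x : f_s(x) + (\varepsilon_0/2)|x|^2 \le \tilde\phi(s) + 1\}$, whose graph lies in $\Sigma_1 \otimes \mathcal{B}_{\R^n}$, produces a measurable $\alpha$ with $f_s(\alpha(s)) + (\varepsilon_0/2)|\alpha(s)|^2 \le \tilde\phi(s) + 1$ almost everywhere, which yields simultaneously the $\mu_1$-integrability of $f_s(\alpha(s))$ and of $|\alpha(s)|^2$.

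For the measurability of the inf-convolution, universal measurability of $(s,x) \mapsto f_{s,k}(x)$ again follows from the projection theorem, this time applied to the Borel set $\{(s,x,u,t) : f_s(x+u) + k|u|^2 < t\}$. To promote this to $\Sigma_1 \otimes \mathcal{B}_{\R^n}$-measurability off a $\mu_1$-null set, I first verify that $x \mapsto f_{s,k}(x)$ is continuous on $\R^n$ for every $s$: this is a standard feature of the Moreau envelope requiring only $f_s \ge 0$ and the coercivity of $u \mapsto k|u|^2$. Then, for each $x$ in a countable dense subset $D \subset \R^n$, one extracts a $\Sigma_1$-measurable version of $s \mapsto f_{s,k}(x)$ coinciding with the universally measurable function off a $\Sigma_1$-null set $N_x$. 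Setting $N := \bigcup_{x \in D} N_x$ and invoking continuity in $x$ (so that $f_{s,k}$ is a Carath\'eodory function in $(s,x)$), we obtain joint $\Sigma_1 \otimes \mathcal{B}_{\R^n}$-measurability on $(\Omega_1 \setminus N) \times \R^n$.

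For the approximate selection, the multifunction
\[
 \Gamma(s) := \bigl\{ u \in \R^n : f_s(\alpha(s) + u) + k|u|^2 < f_{s,k}(\alpha(s)) + \varepsilon \bigr\}
\]
is defined on $\Omega_1 \setminus N$ by a $\Sigma_1 \otimes \mathcal{B}_{\R^n}$-measurable condition (using the previous part to know that $s \mapsto f_{s,k}(\alpha(s))$ is $\Sigma_1$-measurable there) and has non-empty values wherever $f_{s,k}(\alpha(s)) < +\infty$, which holds off a further $\mu_1$-null set because $f_s$ is L-proper almost everywhere under the hypothesis. The Jankov--von Neumann selection theorem then provides the desired $\Sigma_1$-measurable $u(s)$. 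The main obstacle throughout is the bookkeeping of measurability: every infimum and every selection naturally produces only a universally measurable object, which must then be upgraded to $\Sigma_1$-measurable by modification on a $\mu_1$-null set; the Polish structure of $\Omega_1$ is precisely what makes these classical projection and selection theorems applicable.
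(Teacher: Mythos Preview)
Your proof is correct and follows essentially the same approach as the paper's: measurable projection plus Jankov--von Neumann selection for the first and third assertions, and continuity of the Moreau envelope in $x$ combined with a countable dense set for the second. The only cosmetic difference is in the first part, where you pass through the intermediate infimum $\phi(s)$ before selecting, while the paper selects directly from the Borel set $\{(s,x): f_s(x)+\tfrac{\varepsilon_0}{2}|x|^2 < F(s)+1\}$ with $e^{-F(s)}$ a normalized version of $Z(s)$; both routes yield the same inequality $f_s(\alpha(s))+\tfrac{\varepsilon_0}{2}|\alpha(s)|^2 \le F(s)+1$.
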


\begin{proof}
Let
\[
   e^{-F(s)} 
 = \kappa \int_{\R^n} e^{-f_s(x) - \varepsilon_0 |x|^2} \, dx
 = \int_{\R^n} e^{-f_s(x) - \varepsilon_0 |x|^2 / 2} \, d\nu(x),
\]
where $\kappa = (2 \pi / \varepsilon_0)^{-n/2}$ is chosen so that
$\kappa \int_{\R^n} e^{- \varepsilon_0 |x|^2 / 2} \, dx = 1$, making $\nu$ a
probability measure. Consider the Borel set
\[
 C = \{ (s, x) : f_s(x) + \varepsilon_0 |x|^2 / 2 < F(s) + 1 \}
\]
in the Polish space $\Omega_1 \times \R^n$. The projection of this set on
$\Omega_1$ is an analytic set, and contains the Borel set 
$B := \{F < +\infty\}$. We have $\mu_1(\Omega_1 \setminus B) = 0$. By the
Jankoff--von Neumann selection theorem (see~\cite[Theorem~6.9.1]{Boga}), we
can find a universally measurable section 
$\sigma(s) = (s, \alpha(s)) \in C$ defined on~$B$. We get that
\[
 f_s(\alpha(s)) + \varepsilon_0 |\alpha(s)|^2 / 2 < F(s) + 1,
\]
from what the conclusions follow since $f_s \ge 0$. 

 Similarly, for every real $c$, the set $\{ (s, x) : f_{s, k}(x) < c \}$ is
the projection of the Borel set
\[
 \{ (s, x, u) : f_s(x + u) + k |u|^2 < c \}.
\]
It follows that for every fixed $x$, the function $s \mapsto f_{s, k}(x)$ is
universally measurable, hence $\mu_1$-equivalent to a Borel function on
$\Omega_1$. Let $D$ be a countable dense set in $\R^n$. For every 
$d \in D$, there is a negligible set $N_d \in \Sigma_1$ such that 
$s \mapsto f_{s, k}(d)$ is Borel outside $N_d$. Let $N = \bigcup_d N_d$.
Since $x \mapsto f_{s, k}(x)$ is continuous, we get that 
$(s, x) \mapsto f_{s, k}(x)$ is Borel on 
$\bigl( \Omega_1 \setminus N \bigr) \times \R^n$.

 Let $\alpha$ be Borel from $\Omega_1$ to $\R^n$. Then 
$\Omega_1 \setminus N$ is the projection of the Borel set
\[
 \{ (s, u) : s \notin N, \ 
             f_s(\alpha(s) + u) + k |u|^2 
              < f_{s, k}(\alpha(s)) + \varepsilon
 \}.
\]
Consider as before a universally measurable section $\sigma(s) = (s, u(s))$
defined on $\Omega_1 \setminus N$. Then $u(s)$ provides the promised
selection.
\end{proof}

\begin{rmk}
Let $(\Omega, \Sigma)$ be a measurable space and let $\nu$ be a probability
measure on $(\R^n, \mathcal{B}_{\R^n})$. Let $V$ be the set of bounded real 
$\Sigma \otimes \mathcal{B}_{\R^n}$-measurable functions $h(s, x) = h_s(x)$
satisfying that for every $\varepsilon > 0$, there exist two 
$\Sigma \otimes \mathcal{B}_{\R^n}$-measurable functions 
$\varphi(s, x) = \varphi_s(x)$, $\psi(s, x) = \psi_s(x)$, such that for
every $s \in \Omega$, $\varphi_s \le h_s \le \psi_s$, $\varphi_s$ is \usc\
on $\R^n$, $\psi_s$ is \lsc\ and 
$\int_{\R^n} ( \psi_s - \varphi_s ) \, d\nu < \varepsilon$. Then $V$ is a
vector space of functions on $\Omega \times \R^n$, containing constants,
stable by $\sup(h_1, h_2)$ and stable by pointwise convergence of uniformly
bounded sequences. Indeed, suppose that $h_k \in V$, $k \in \N$, and that
$h_k(s, x) \to h(s, x)$ pointwise with $|h_k(s, x)| \le 1$. For every $k$,
let $\psi_{s, k} \ge h_{s, k}$ be \lsc\ and 
$\int_{\R^n} ( \psi_{s, k} - h_{s, k} ) \, d\nu < 2^{-k}$. Then, for 
every~$s$, we have $\psi_{s, k} - h_{s, k} \to 0$ $\nu$-a.e. hence 
$\psi_{s, k} - h_s \to 0$ $\nu$-a.e. Next, for every $s$,
$\chi_{s, n} = \sup_{k \ge n} \psi_{s, k}$ is \lsc\ and decreases to $h_s$
$\nu$-a.e. Let
\[
 B_n = \{s \in \Omega : \int_{\R^n} \bigl( \chi_n(s, x)
                - h(s, x) \bigr) \, d\nu(x)
              < \varepsilon \}.
\]
Then $B_n \in \Sigma$ increases to $\Omega$. Define 
$\psi(s, x) = \chi_n(s, x)$ when $s \in B_n \setminus B_{n-1}$, and
similarly on the $\varphi$ side. We get that the pointwise limit $h$
belongs to $V$.

 It follows that $V$ is the space of all bounded $\mathcal{A}$-measurable
functions, for some sub-$\sigma$-algebra $\mathcal{A}$ of 
$\Sigma \otimes \mathcal{B}_{\R^n}$. Since $V$ contains all indicators of
products $B \times C$, $B \in \Sigma$, $C \in \mathcal{B}_{\R^n}$, it
follows that $\mathcal{A} = \Sigma \otimes \mathcal{B}_{\R^n}$. The result
applies then to all $\mathcal{A}$-measurable functions $h$ such that $h_s$
is bounded for every $s$, by cutting $\Omega$ into pieces $B_k \in \Sigma$
where the bound of $h_s$ is in $[k, k+1)$.

 Suppose that $h$ is bounded and $\ge 0$, and that 
$H(s) = \int_{\R^n} h(s, x) \, d\nu(x) > 0$ for every $s \in \Omega$.
Applying the result to the function $H(s)^{-1} h(s, x)$, we find a function
$\varphi(s, x)$ \usc\ in $x$ such that $0 \le \varphi \le h$ and
\[
   \log \Bigl( \int_{\R^n} \varphi(s, x) \, d\nu(x) \Bigr)
 > \log \Bigl( \int_{\R^n} h(s, x) \, d\nu(x) \Bigr) - \varepsilon
\]
for every $s$. Applying to $h(s, x) = e^{ - f_s(x) }$, we get an \lsc\ in
$x$ function $\psi_s(x)$ such that $f_s \le \psi_s$ and
$- \log \bigl( \int_{\R^n} e^{-\psi_s} \, d\nu \bigr)
 \le - \log \bigl( \int_{\R^n} e^{-f_s} \, d\nu \bigr) + \varepsilon$.
This shows that in Theorem~\ref{genPL1}, the function $f_s$ can be
assumed to be a \lsc\ function.
\end{rmk}

\end{document}